\documentclass[a4paper, 9pt]{article}
\usepackage{graphics}
\usepackage[dvips]{color}

\usepackage{lscape}
\usepackage{latexsym}
\usepackage{amsmath,verbatim}
\usepackage{graphics}
\usepackage{amsthm}
\usepackage{amssymb}
\usepackage[mathscr]{euscript}
\usepackage{yfonts}
\usepackage{makeidx}
\usepackage{stmaryrd}
\usepackage{multicol}
\usepackage{bm}
\usepackage[all]{xy}
\numberwithin{equation}{section}
\newtheorem{thm}{Theorem}[section]
\newtheorem{prop}[thm]{Proposition}
\newtheorem{lem}[thm]{Lemma}
\newtheorem{cor}[thm]{Corollary}
\newtheorem{claim}{Claim}{\bf}{\it}

\newtheorem{fthm}{Theorem}{\bf}{\it}
{\bf}{\it}
\newtheorem{fcor}[fthm]{Corollary}{\bf}{\it}
\theoremstyle{definition}
\newtheorem{defn}[thm]{Definition}

{\bf}{\rm}

\theoremstyle{remark}

\newtheorem{rem}[thm]{Remark}
{\bf}{\it}

\newtheorem{definition and corollary}[thm]{Definition and Corollary}

{\it}{\rm}

\newcommand{\A}{{\mathbb A}}
\newcommand{\hB}{\widehat{B}}
\newcommand{\al}{\alpha}

\newcommand{\C}{{\mathbb C}}
\newcommand{\cO}{{\mathcal O}}

\newcommand{\Hom}{\mbox{\rm Hom}}

\newcommand{\bi}{{\mathbf i}}

\newcommand{\la}{\lambda}

\newcommand{\Fr}{\mathsf{Fr}}
\newcommand{\g}{\mathfrak{g}}
\newcommand{\gb}{\mathfrak{b}}

\newcommand{\h}{\mathfrak{h}}

\newcommand{\tI}{\mathtt{I}}
\newcommand{\tJ}{\mathtt{J}}
\newcommand{\hN}{\widehat{N}}
\newcommand{\gn}{\mathfrak{n}}

\newcommand{\bO}{\mathbb{O}}

\renewcommand{\P}{\mathbb{P}}
\newcommand{\Proj}{\mathrm{Proj}}

\newcommand{\R}{\mathbb{R}}
\newcommand{\bX}{\mathbb{X}}
\newcommand{\gX}{\mathfrak{X}}

\newcommand{\gY}{\mathfrak{Y}}

\newcommand{\Z}{\mathbb{Z}}

\newcommand{\Span}{\mbox{\rm Span}}

\newcommand{\Ga}{\mathbb G_a}
\newcommand{\Gm}{\mathbb G_m}

\title{Frobenius splitting of thick flag manifolds of Kac-Moody algebras\footnote{MSC2010: 20G44}}
\author{Syu \textsc{Kato}\footnote{Department of Mathematics, Kyoto University, Oiwake Kita-Shirakawa Sakyo Kyoto 606-8502 JAPAN \tt{E-mail:syuchan@math.kyoto-u.ac.jp}}}

\begin{document}
\maketitle

\begin{abstract}
We explain that the Pl\"ucker relations provide the defining equations of the thick flag manifold associated to a Kac-Moody algebra. This naturally transplant the result of Kumar-Mathieu-Schwede about the Frobenius splitting of thin flag varieties to the thick case. As a consequence, we provide a description of the space of global sections of a line bundle of a thick Schubert variety as conjectured in Kashiwara-Shimozono [Duke Math. J. 148 (2009)]. This also yields the existence of a compatible basis of thick Demazure modules, and the projective normality of the thick Schubert varieties.
\end{abstract}

\section*{Introduction}
The geometry of flag varieties of a Lie algebra $\g$ is ubiquitous in representation theory. In case $\g$ is a Kac-Moody algebra, we have two versions of flag varieties $X$ and $\bX$, that we call the thin flag varieties and thick flag manifolds, respectively (see e.g. \cite{KT95}). They coincide when $\g$ is of finite type, and in this case we have
\begin{equation}
X = \bX = \mathrm{Proj} \, \bigoplus _{\la} L ( \la )^{\vee},\label{BW}
\end{equation}
where $\la$ runs over all dominant integral weights and $L ( \la )$ denotes the corresponding integrable highest weight representation of $\g$. The isomorphism (\ref{BW}) is less obvious when $\g$ is not finite type since $L ( \la )$ is no longer finite-dimensional. In fact, the both of $X$ and $\bX$ are quotients of certain Kac-Moody groups $G$ associated to $\g$, and we can ask whether we have 
\begin{equation}
G \cong \mathrm{Spec} \, \Bbbk [G]\label{PW}
\end{equation}
as an enhancement of (\ref{BW}), where $\Bbbk [G]$ is the coordinate ring of $G$ (cf. Kac-Peterson \cite{KP83}). However, Kashiwara \cite[\S 6]{Kas89} explains that none of the choice of $G$ can satisfy (\ref{PW}) for any version of a reasonably natural commutative ring $\Bbbk [G]$.

The goal of this paper is to explain that despite the above situation, we can still understand the geometry of Kac-Moody flag manifolds as infinite type schemes so that we can deduce some consequences in representation theory.

To explain what we mean by this, we introduce some more notation: The scheme $\bX$ admits a natural action of the subgroup $B$ of $G$ that corresponds to the non-negative part of $\g$, and the set of $B$-orbits of $\bX$ is in natural bijection with the Weyl group $W$ of $\g$. Hence, we represent a $B$-orbit closure of $\bX$ by $\bX ^w$ for some $w \in W$. For each integral weight $\la$ of $\g$, we have an associated line bundle $\cO_{\bX} ( \la )$ and its restriction $\cO_{\bX^w} ( \la )$ to $\bX^w$.

The main result in this paper is:

\begin{fthm}[$\doteq$ Theorem 1.23 and Corollary 2.21]\label{fP}
For an arbitrary Kac-Moody algebra, the thick flag manifold $\bX$ admits the presentation $(\ref{BW})$ as schemes. Similar result holds for each $B$-orbit closure of $\bX$.
\end{fthm}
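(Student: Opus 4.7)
The plan is to identify $\bX$ with $\Proj R$, where $R := \bigoplus_\la L(\la)^\vee$ is summed over dominant integral weights, by first embedding $\bX$ into a product of projective spaces and then proving that its defining ideal is generated by the Plücker-type quadratic relations that present $R$.

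I would first use the line bundles $\cO_\bX(\la)$ to build morphisms $\bX \to \P(H^0(\bX, \cO_\bX(\la)))$, and identify $H^0(\bX, \cO_\bX(\la))$ with $L(\la)^\vee$ via the Borel-Weil-type theorem for thick flag manifolds. Assembling these yields a map $\varphi : \bX \to \prod_\la \P(L(\la)^\vee)$ that factors through the closed subscheme $Y$ defined by the natural multiplications $L(\la)^\vee \otimes L(\mu)^\vee \twoheadrightarrow L(\la+\mu)^\vee$, dual to the inclusions of Cartan components $L(\la+\mu) \hookrightarrow L(\la) \otimes L(\mu)$. By construction $Y = \Proj R$, so proving the theorem reduces to showing that $\varphi$ is a scheme-theoretic isomorphism onto $Y$.

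Establishing this is where Frobenius splitting enters. Following Kumar-Mathieu-Schwede for the thin flag variety $X$, I would construct a Frobenius splitting of $\bX$ compatible with all Schubert subvarieties $\bX^w$; the transplant from thin to thick is facilitated by the fact that $X$ and $\bX$ share their Kac-Moody data and Bruhat stratifications, so that the local splitting data survives. Compatible splittings then yield the standard consequences: (a) $H^{>0}(\bX^w, \cO_{\bX^w}(\la)) = 0$ for dominant $\la$; (b) the restriction $H^0(\bX, \cO_\bX(\la)) \twoheadrightarrow H^0(\bX^w, \cO_{\bX^w}(\la))$ is surjective, identifying the target with a thick Demazure module in the sense of Kashiwara-Shimozono; and (c) the multiplication $H^0(\bX, \cO_\bX(\la)) \otimes H^0(\bX, \cO_\bX(\mu)) \twoheadrightarrow H^0(\bX, \cO_\bX(\la+\mu))$, as well as its analogue on each $\bX^w$, is surjective. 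These facts together identify the homogeneous coordinate ring of $\bX$ (resp.\ $\bX^w$) under $\varphi$ with $R$ (resp.\ the analogous thick Demazure ring), giving the claimed $\Proj$ presentations.

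The main obstacle is the infinite-dimensionality of the setting: $\bX$ is naturally an ind-/pro-scheme, the representations $L(\la)$ are typically infinite-dimensional, and the product $\prod_\la \P(L(\la)^\vee)$ must be interpreted in a consistent scheme-theoretic formalism. Showing that the Plücker relations generate the defining ideal \emph{scheme-theoretically} (not merely set-theoretically), constructing Frobenius splittings directly on the pro-scheme $\bX$ rather than merely inheriting them from $X$, and descending positive-characteristic splitting statements to arbitrary characteristic via $\Z$-forms, are the technical steps I expect to carry the bulk of the work.
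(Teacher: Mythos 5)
There is a genuine gap at the crux of the argument: passing from ``the section ring of $\bX$ is $R=\bigoplus_{\la}L(\la)^{\vee}$'' to ``$\bX\cong\Proj R$''. In this infinite-type setting you cannot invoke the standard fact that a projective variety is the $\Proj$ of its section ring; even after you know $\varphi:\bX\to\Proj R$ exists and is an embedding on the big cell, you must still rule out that $\Proj R$ contains points not in the image of $\varphi$, i.e.\ points where all extremal-weight coordinates $v_{w\la}^{*}$ vanish. Your proposal never addresses this. The paper's proof of Theorem 1.23 turns entirely on this point: it first shows that the $H$-fixed points of $\Proj R$ are in bijection with $W$ (Lemma 1.14, via reduction to the thin Schubert cells), and then shows that the complement $E=\Proj R\setminus\bX'$, being closed and stable under $H$ and all $\mathop{SL}(2,i)$, would have to contain an $H$-fixed point by a $\Gm$-contraction argument --- a contradiction. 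Nothing in your plan plays the role of this fixed-point count, and the surjectivity of multiplication maps plus cohomology vanishing that you list do not substitute for it.

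A secondary issue is the logical order of the Frobenius splitting. In the paper the identification $\bX=\Proj R$ is proved \emph{without} any splitting: the surjectivity of $L(\la)^{\vee}\otimes L(\mu)^{\vee}\to L(\la+\mu)^{\vee}$ is Mathieu's theorem, and $H^{0}(\bX,\cO_{\bX}(\la))\cong L(\la)^{\vee}$ follows from Zariski density of the thin flag variety together with an $\mathop{SL}(2,i)$-integrability analysis of submodules of $M(\la)^{\vee}$. The splitting of $\bX$ (Corollary B) is then \emph{deduced} from the presentation, by assembling the canonical splittings of the thin Demazure rings $R_{w_k}$ into a splitting of $R=\varprojlim_k R_{w_k}$ restricted to its $H$-finite part. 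Your route runs the implication backwards, and the step you yourself flag as deferred --- constructing the splitting directly on the pro-scheme $\bX$ without already having its algebraic model --- is exactly where that reversal breaks down. For the $B$-orbit-closure part of the statement your use of splittings (restriction surjectivity via compatibly split Richardson varieties) does match the paper's Section 2, but note that the paper also needs the normality of $\bX^{w}$ (imported from Kashiwara--Shimozono) before it can conclude $\bX^{w}\cong\Proj R^{w}$ and projective normality; generation of $R^{w}$ in the degrees $\varpi_i$ alone is not enough.
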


As Kashiwara's embedding of $\bX$ into the Grassmannian \cite[\S 4]{Kas89} factors through a highest weight integrable module, Theorem \ref{fP} asserts that it is a closed embedding. Hence, Theorem \ref{fP} affirmatively answers the question in \cite[4.5.6--4.5.7]{Kas89}.

The thin flag variety $X$ forms a Zariski dense subset of $\bX$ (see e.g. Kashiwara-Tanisaki \cite[\S 1.3]{KT95}). This implies that the projective coordinate ring of $X$ (as an ind-scheme) is the completion of that of $\bX$ (as a honest scheme). Therefore, we can transplant the Frobenius splitting of $X$ (or its ind-pieces) to that of $\bX$ provided in Kumar-Schwede \cite{KS14}:

\begin{fcor}[$\doteq$ Corollary 2.12]\label{fFS}
For an arbitrary Kac-Moody algebra over an algebraically closed field $\Bbbk$ of positive characteristic, the thick flag manifold $\bX$ admits a Frobenius splitting that is compatible with the $B$-orbits.
\end{fcor}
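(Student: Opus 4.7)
The plan is to transplant Kumar--Schwede's splitting from the thin side to $\bX$ via the projective presentation furnished by Theorem~\ref{fP}. Concretely, Theorem~\ref{fP} realizes $\bX = \Proj R$ with $R = \bigoplus_{\la} L(\la)^{\vee}$, and gives an analogous identification $\bX^w = \Proj R^w$ for each $B$-orbit closure, where $R^w$ is the corresponding graded quotient of $R$. Producing a Frobenius splitting of $\bX$ compatible with every $\bX^w$ is then equivalent to producing a $\Bbbk$-linear endomorphism $\phi: R \to R$ satisfying $\phi(a^p b) = a\,\phi(b)$, $\phi(1) = 1$, and $\phi(\ker(R \twoheadrightarrow R^w)) \subset \ker(R \twoheadrightarrow R^w)$ for every $w \in W$.

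The construction proceeds by an inverse limit on the thin side. Using the Zariski density of the thin flag variety $X \subset \bX$ (Kashiwara--Tanisaki \cite{KT95}), for every dominant integral $\la$ the space $L(\la)^{\vee}$ is the inverse limit of the duals $D_v(\la)^{\vee}$ of the Demazure submodules $D_v(\la) \subset L(\la)$; hence $R$ is the per-weight inverse limit of the projective coordinate rings $R_v := \bigoplus_{\la} D_v(\la)^{\vee}$ of the thin Schubert varieties $X_v$. Kumar--Schwede \cite{KS14} provide Frobenius splittings on the $X_v$ that are compatible with all Schubert and opposite Schubert subvarieties; dually, one obtains Frobenius traces $\phi_v : R_v \to R_v$ that commute with the restriction surjections $R_{v'} \twoheadrightarrow R_v$ for $v \leq v'$. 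Passing to the weight-by-weight inverse limit yields an endomorphism $\phi: R \to R$ that inherits the identity $\phi(a^p b) = a\,\phi(b)$ and the normalization $\phi(1) = 1$ from the $\phi_v$, and therefore splits the Frobenius morphism on $\bX$.

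For the compatibility with the $\bX^w$, the ``$B$-orbit'' half of Theorem~\ref{fP} identifies the homogeneous ideal of $\bX^w$ inside $R$ with the per-weight inverse limit of the ideals cutting out the Richardson-type intersections $\bX^w \cap X_v$ in each $R_v$; since the $\phi_v$ were chosen compatible with precisely such intersections, the limiting $\phi$ preserves the ideal of $\bX^w$. The principal technical obstacle I anticipate is exactly this last step: establishing a clean dictionary between the thick ideal of $\bX^w$ in each weight $\la$ and the opposite-Demazure data on the $X_v$ that Kumar--Schwede's compatibility directly controls, together with checking that this dictionary is stable under the transition maps $R_{v'} \twoheadrightarrow R_v$ so that it genuinely passes to the inverse limit. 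Once that dictionary is in place, the corollary follows by routine inverse-limit bookkeeping.
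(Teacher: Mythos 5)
Your outline shares the paper's first move (building the splitting of $R$ as an inverse limit over the thin Schubert varieties), but it omits the ingredient that makes both halves of the argument actually close, namely \emph{canonicity}. First, for the inverse system: you assert that the Kumar--Schwede splittings $\phi_v$ of the $R_v$ ``commute with the restriction surjections $R_{v'}\twoheadrightarrow R_v$,'' but arbitrary choices of splittings have no reason to do so. The paper obtains coherence from Theorem \ref{uniq}: the $B$-canonical splitting of a Bott--Samelson--Demazure--Hansen variety $Z(\bi)$ compatible with all subword subvarieties is \emph{unique}, and its restriction to a subword variety is again $B$-canonical (Corollary \ref{rest}); this uniqueness is what forces the splittings of the $R_{w_k}$ to form an inverse system (Lemma \ref{B-canonical}). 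You need to invoke this, or prove some other uniqueness statement, before passing to the limit.

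Second, and more seriously, the compatibility with the $\bX^w$ --- the step you yourself flag as the ``principal technical obstacle'' --- is left unresolved, and your proposed route through it is harder than necessary. Identifying the homogeneous ideal of $\bX^w$ with the inverse limit of the ideals of $\bX^w\cap X_v$ requires (a) the Zariski density of $X\cap\bX^w$ in $\bX^w$ (Lemma \ref{Z-dense2}, which needs a separate argument), and (b) knowing that $\bX^w\cap X_v=\overline{B^-x_w}\cap\overline{Bx_v}$ is a Richardson variety in the sense of \cite{KS14}; the latter is not immediate, since $\bX^w$ is the closure of a $\hB^-$-orbit rather than of a $B^-$-orbit, and the paper only establishes this identification later (in the proof of Theorem \ref{rest-surj}) via a factorization argument in $\hN^-$. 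Moreover, the Richardson compatibility in \cite{KS14} is stated under a symmetrizability hypothesis that the paper is explicitly dropping. The paper sidesteps all of this: it upgrades the splitting to a $B^-$-canonical one (Corollary \ref{B-can1}), checks by a weight-space argument that the splitting is compatible with each $H$-fixed point $x_w$, and then invokes the general principle that a $B^-$-canonical splitting compatible with a closed subscheme is compatible with the closure of its $B^-$-orbit (\cite[Proposition 4.1.8]{BK05}); Lemma \ref{Z-dense2} then identifies $\overline{B^-x_w}$ with $\bX^w$. Without either this canonicity argument or a fully worked-out version of your Richardson-ideal dictionary, the proof is incomplete.
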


From this, we deduce some conclusions on the level of global sections as:

\begin{fthm}[$\doteq$ Theorem 2.17, 2.19, and Corollary 2.21, 2.22]\label{Icor}
For each $w \in W$, we have:
\begin{enumerate}
\item the natural restriction map
\begin{equation}
\Gamma ( \bX,\cO_{\bX} ( \la )) \longrightarrow \!\!\!\!\! \rightarrow \Gamma ( \bX^w,\cO_{\bX^w} ( \la ))\label{frest}
\end{equation}
is surjective;
\item the image of the inclusion
$$\Gamma ( \bX^w,\cO_{\bX^w} ( \la ))^{\vee} \subset \Gamma ( \bX,\cO_{\bX} ( \la ))^{\vee} = L ( \la )^{\vee}$$
obtained as the dual of $(\ref{frest})$ is cyclic as a $\mathrm{Lie} \, B$-module;
\item the scheme $\bX^w$ is projectively normal;
\item the sums of modules in $\{\Gamma ( \bX^w,\cO_{\bX^w} ( \la ))^{\vee}\}_{w \in W}$ forms a distributive lattice in terms of intersection.
\end{enumerate}
\end{fthm}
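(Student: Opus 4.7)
The overall strategy is to derive all four assertions as formal consequences of Theorem~\ref{fP} (which embeds $\bX$ and each $\bX^w$ as closed subschemes of $\Proj \bigoplus_\la L(\la)^\vee$) together with Corollary~\ref{fFS} (the compatible Frobenius splitting). The Proj presentation of Theorem~\ref{fP} is what permits the standard compatible-splitting machinery of Mehta--Ramanathan and Ramanan--Ramanathan to be imported to this non-finite-type setting: every question about $\cO_{\bX}(\la)$ reduces to a question about a single finite-dimensional graded piece of an explicit graded ring, on which the classical arguments go through verbatim.

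For (1), I would apply the standard Frobenius-splitting argument. The compatible splitting of Corollary~\ref{fFS} furnishes a commutative square relating the restriction maps $\Gamma(\bX,\cO_\bX(p\la))\to\Gamma(\bX^w,\cO_{\bX^w}(p\la))$ and $\Gamma(\bX,\cO_\bX(\la))\to\Gamma(\bX^w,\cO_{\bX^w}(\la))$ in which the two vertical arrows induced by the splitting are surjective; iterating $\la\mapsto p^n\la$ reduces surjectivity of $(\ref{frest})$ to its analogue for a cofinal system of sufficiently positive weights, where it holds by Kashiwara's vanishing together with Theorem~\ref{fP}. Any residual dependence on positive characteristic is removed by descending via a $\Z$-form of $\bigoplus_\la L(\la)^\vee$, legitimate in view of the Proj presentation.

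For (2), the image of $\Gamma(\bX^w,\cO_{\bX^w}(\la))^\vee$ in $L(\la)^\vee$ is $B$-stable since both $\bX^w$ and $\cO_\bX(\la)$ are $B$-equivariant, and it contains the extremal weight line of weight $w\la$ because the $T$-fixed point labelled by $w$ lies in $\bX^w$. Comparing characters -- produced on the geometric side via part (1) and Theorem~\ref{fP}, and on the representation-theoretic side by applying the BGG--Demazure operators along a reduced expression of $w$ to the highest-weight line -- forces this submodule to coincide with the Demazure module, which is cyclic over $\Lie B$ by construction. Part (3) is then immediate: by part (1), the section ring of $\bX^w$ is the image of the section ring of $\bX$, and the image of a Frobenius-split normal ring under a compatible quotient is again normal, so $\bX^w$ is projectively normal.

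Finally, (4) is the Ramanan--Ramanathan principle. The compatible splitting of Corollary~\ref{fFS} splits compatibly with every scheme-theoretic intersection of the $\bX^w$, so that the family $\{\bX^w\}_{w\in W}$ generates, under intersection and sum of ideal sheaves, a distributive lattice of compatibly split subschemes; dualising via part (1) yields the asserted distributive lattice inside $L(\la)^\vee$. The main technical obstacle throughout is not the representation-theoretic combinatorics but the bookkeeping required to make the finite-type Frobenius-splitting theorems transfer to $\bX$; the Proj presentation of Theorem~\ref{fP} is precisely the device that allows every relevant statement to be pulled back to a single finitely generated graded piece of the coordinate ring, where the transfer is routine.
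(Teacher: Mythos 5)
Your overall framing (reduce everything to the Proj presentation plus the compatible splitting) matches the paper's, but three of your four steps rest on finite-type Frobenius-splitting facts that do not transfer to $\bX$, and this is exactly where the real work lies. For (1), the standard iteration $\la\mapsto p^n\la$ reduces surjectivity of the restriction to its validity for sufficiently positive twists, and that base case is supplied in the classical setting by Serre vanishing for the ideal sheaf of $\bX^w$ on a \emph{proper, finite-type} scheme. Here $\bX$ is neither proper nor of finite type, and there is no ``Kashiwara vanishing'' asserting surjectivity of $\Gamma(\bX,\cO_\bX(\la))\to\Gamma(\bX^w,\cO_{\bX^w}(\la))$ for large $\la$ --- that surjectivity \emph{is} Theorem 2.17, the thing being proved. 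The paper's actual route is different: it intersects $\bX^v$ with the thin Schubert varieties $X_w$ to obtain genuine finite-type Richardson varieties, invokes the Kumar--Schwede compatible splitting there to get surjectivity on each finite piece (lifting to characteristic $0$ via the $\Z$-form only at that finite-type stage), passes to an inverse limit using Mittag--Leffler, and finally identifies $\Gamma(\bX^v,\cO(\la))$ with the $H$-finite part of $\Gamma(\bX^v\cap X,\cO(\la))$ via the diagram (2.4). Without some substitute for this reduction to finite-dimensional compatibly split pairs, your argument for (1) does not close.

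For (2), your character comparison is aimed at the wrong module: applying Demazure operators along a reduced expression of $w$ produces the character of the \emph{thin} Demazure module $L_w(\la)=U(\gn)v_{w\la}$, which is finite-dimensional, whereas $\Gamma(\bX^w,\cO_{\bX^w}(\la))^\vee$ is the \emph{thick} Demazure module $L^w(\la)=U(\gn^-)v_{w\la}$, of finite codimension rather than finite dimension; no finite Demazure recursion computes it. The paper instead deduces cyclicity from the integrality of $\bX^w$ together with the cocyclicity of $\Bbbk[\bO^w]\cong U(\gn^-)^\vee\otimes\Bbbk_{-w\la}$ (Mathieu's Lemme~4). For (3), the claim that ``the image of a Frobenius-split normal ring under a compatible quotient is again normal'' is false: compatibly split subschemes are reduced, even weakly normal, but need not be normal (nodal configurations are routinely compatibly split). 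The paper imports the normality of $\bX^w$ separately from Kashiwara--Shimozono (Theorem 2.20) and combines it with the degree-one generation of $R^w$ to get projective normality. Your sketch of (4) is closest in spirit to the paper's Mayer--Vietoris argument on reduced unions, but it still presupposes the extension of (1) to reduced unions of thick Schubert varieties (Corollary 2.18), which in turn depends on the Richardson-variety mechanism above.
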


We remark that Theorem \ref{Icor} 4) should be also obtained as a combination of Kashiwara's crystal basis theory \cite{Kas94} and Littelmann's path model theory \cite{Lit95} when $\g$ is symmetrizable. However, the only reference the author is aware beyond the finite case is the affine case presented in Ariki-Kreimann-Tsuchioka \cite[\S 6]{AKT08} (as stated there, the proofs of this part are due to Kashiwara and Sagaki).

We also note that Theorem \ref{Icor} 1) and 2) confirms a part of the Kashiwara-Shimozono conjecture \cite[Conjecture 8.10]{KS09} (that originally concerns when $\g$ is affine).

\section{Defining equations of thick flag manifolds}
We work over an algebraically closed field $\Bbbk$. We employ \cite{Kum02} as a basic reference, and we may refer to \cite{Kum02} also for $\mathrm{char} \, \Bbbk > 0$ case without a comment (while the book deals only for $\Bbbk = \C$) when we  supply enough (other) results so that its proof carries over based on them.

Let $\tI$ be a finite set with its cardinality $r$ and let $C = (c_{ij})_{i,j\in \tI}$ be a generalized Cartan matrix (GCM) in the sense of \cite[\S 1.1]{Kac}. Let $\g$ be the Kac-Moody algebra associated to $C$, and let $\h$ be its Cartan subalgebra (we have $\dim_{\Bbbk} \, \h = 2 |\tI| - \mathrm{rank} \, C$). Let $Q$ and $Q^{\vee}$ be the root lattice and the coroot lattice of $\g$, and $\{\al_i\}_{i \in \tI} \subset Q$ and $\{\al_i^{\vee}\}_{i \in \tI} \subset Q^{\vee}$ are the set of simple roots and the set of simple coroots, respectively. Let $X^*$ be a $\Z$-lattice that contains $Q$ and equipped with elements $\varpi_1,\ldots,\varpi_r \in X^*$ so that $X^* \otimes_{\Z} \Bbbk \cong \h^*$, and there exists a pairing
$$\left< \bullet, \bullet \right> : Q^{\vee} \times X^* \longrightarrow \Z$$
that satisfies
$$\left< \al_i^{\vee}, \al_j \right> = c_{ij}, \hskip 5mm \left< \alpha_i^{\vee}, \varpi_j \right> = \delta_{ij}, \hskip 5mm \text{and} \hskip 5mm \left< \alpha_i^{\vee}, X^* \right> = \Z.$$
Let $\{E_i, F_i\} _{i \in \tI}$ be the Kac-Moody generators of $\g$ so that $[E_{i},F_{j}] = \delta_{ij} \alpha_{i}^{\vee} \in \h$ for $i,j \in \tI$. Let $\gn, \gn^- \subset \g$ be the Lie subalgebras generated by $\{E_i\}_{i \in \tI}$ and $\{F_i\}_{i \in \tI}$, respectively. We set $H := \mathrm{Spec} \, \Bbbk [ e^{\la} \mid \la \in X^* ]$. We have $\mathrm{Lie} \, H = \h$. For each $\al \in X^*$, we define
$$\g _\al := \{ \xi \in \g \mid \mathrm{Ad} ( h ) \xi = \al ( h ) \xi, \hskip 3mm \forall h \in H \}, \hskip 3mm \mathrm{mult} \, \al := \dim \, \g_\al.$$
We set
$$\Delta^+ := \{ \al \in X^* \setminus \{ 0 \} \mid \g_\al \subset \gn \}, \hskip 3mm \Delta^- := - \Delta^+.$$

We have reflections $\{ s_i \}_{i \in \tI}$ on $\mathrm{Aut} ( X^* )$ that generates a Coxeter group $W$. We denote its length function by $\ell$, and the Bruhat order by $<$ (see Kumar \cite[Definition 1.3.15]{Kum02}). We have a subset
$$\Delta^+_{\mathrm{re}} := \Delta^+ \cap W \{ \al_i \}_{i \in \tI} \subset \Delta^+.$$
Each $\al \in \Delta^{+}_{\mathrm{re}}$ gives a reflection $s _{\al} \in W$ defined through the conjugation of a simple reflection. We have $\mathrm{mult} \, \al = 1$ for $\al \in \Delta^+_{\mathrm{re}}$, and we have $\mathfrak{sl} ( 2 ) \cong \g_\al \oplus \Bbbk \al^{\vee} \oplus \g_{-\al}$ as Lie algebras in this case.

For each $i \in \tI$, we define $\mathop{SL} ( 2, i )$ as the connected and simply connected algebraic group with an identification $\mathrm{Lie} \, \mathop{SL} ( 2, i ) = \Bbbk E_i \oplus \Bbbk \alpha_i^{\vee} \oplus \Bbbk F_i$.
For each $n > 0$, we set
$$\Delta^- ( n ) := \{ \al \in \Delta^- \mid \al = - \sum_{i \in \tI} m_i \al_i, \hskip 2mm m_i \in \Z_{\ge 0}, \hskip 2mm \sum_i m_i \le n \} \subset \Delta^-.$$
Then, $\bigoplus_{\al \in \Delta^- \backslash \Delta ^- ( n )} \g_{-\al} \subset \gn$ and $\bigoplus_{\al \in \Delta^- \backslash \Delta ^- ( n )} \g_{\al} \subset \gn^-$ are ideals. We denote the quotients by $\gn ( n )$ and $\gn ^- ( n )$, respectively. By construction, we have a Lie algebra quotient maps $\gn ( n ) \rightarrow \gn ( n' )$ and $\gn^- ( n ) \rightarrow \gn^- ( n' )$ for $n > n'$.

We define a pro-unipotent group
$$\hN ^- := \varprojlim_n N^- ( n ),$$
where
\begin{itemize}
\item $N^- ( n )$ is a smooth connected unipotent algebraic group with its Lie algebra $\gn ^- ( n )$ for each $n \in \Z_{> 0}$;
\item the transition maps in the inverse limit are surjective smooth morphisms that induce the Lie algebra quotients above.
\end{itemize}
This group, together with the groups $N$,  $N ( H )$, $G^-$ defined below, and a lift $S$ of $\{s_i\}_{i \in \tI} \subset W$ to $N ( H )$, must satisfy the axioms presented in \cite[Definition 5.2.1]{Kum02} as a 6-tuple $(G^-,N ( H ), \hN^-, N, H, S)$. Applying the Chevalley involution to $\{ N^- ( n ) \}_{n \ge 1}$, we obtain a pro-unipotent group $\hN := \varprojlim_n N ( n )$ corresponding to $\gn$.

We define $\hB^+ := H \hN$ and $\hB^- := H \hN^-$, that are (pro-algebraic) groups (and also a Lie subalgebra $\mathfrak b := \mathfrak h \oplus \mathfrak n \subset \mathfrak g$). For each $\al \in \Delta^+_{\mathrm{re}}$, we have a one-parameter unipotent subgroup $\rho_\al : \Ga \rightarrow \hB^{+}$ so that $h \rho_\al ( z ) h^{-1} = \rho_\al ( \al ( h ) z )$ for every $z \in \Ga$ and $h \in H$. Similarly, we have a one-parameter unipotent subgroup $\rho_{-\al} : \Ga \rightarrow \hB^{-}$.

We have subgroups $N^+ \subset \widehat{N}^+$ and $N^- \subset \widehat{N}^-$ formed by products of finitely many elements from $\{ \rho_{\al_i} ( \mathbb G_a ) \}_{i \in \tI}$ and $\{ \rho_{- \al_i} ( \mathbb G_a ) \}_{i \in \tI}$, respectively. Let $N ( H )$ denote the group generated by $H$ and the normalizers of $H$ inside $\mathop{SL} ( 2, i )$ for each $i \in \tI$, whose quotient by $H$ is $W$. We have a translation of elements of $\hB^{\pm}$ under the action of $N ( H )$, defined partially (see \cite[\S 6.1]{Kum02}). The positive Kac-Moody group $G^+$ is defined as the amalgamated product of $\hB^+$ and $N ( H )$, while the negative Kac-Moody group $G^-$ is defined as the amalgamated product of $\hB^-$ and $N ( H )$ (see \cite[\S 5.1]{Kum02}). For each $\tJ \subset \tI$, we have a partial amalgam $\hB^{\pm} \subset \hB^{\pm} _{\tJ} \subset G^{\pm}$, that we call the parabolic subgroups corresponding to $\tJ$. 

Let $U_\Z ( \g )$ (resp. $U_\Z ( \h )$, $U_\Z ( \gb )$ or $U_\Z ( \gn^- )$) be the Chevalley-Kostant $\Z$-form of the enveloping algebra of $\g$ generated by $E_i^{(n)}, F_i^{(n)}$ $(i \in \tI, n \in \Z_{> 0})$ and
$$h ( m ) := \frac{h ( h - 1 ) \cdots ( h - m + 1 )}{m!} \hskip 5mm h \in \mathrm{Hom}_{\Z} ( X^*, \Z ), m \in \Z_{\ge 0}$$
(resp. $h ( m )$, $E_i^{(n)}$ and $h ( m )$, or $F_i^{(n)}$), and let $U ( \g )$ (resp. $U ( \h )$, $U ( \gb )$, or $U ( \gn^- )$) be its specialization to $\Bbbk$ (see e.g. Tits \cite{Tit82} or Mathieu \cite[Chapter I]{Mat88}).

We understand that a representation of an algebraic group is always algebraic. Note that the complete reducibility of representations always hold for split torus (and we never deal with non-split torus in this paper).

\begin{defn}[integrable highest weight modules]
A $( U ( \h ), H )$-module $M$ is said to be a weight module if $M$ admits a semi-simple action of the above $h ( m )$'s that integrates to the algebraic $H$-action. In this case, we denote by $M_\mu \subset M$ the $H$-weight space of weight $\mu \in X^*$. We call $M$ restricted if we have $\dim \, M_\la < \infty$ for every $\la \in X^*$.\\
A $( U ( \g ), H )$-module $M$ is said to be a highest weight module if $M$ is a weight module as a $( U ( \h ), H )$-module and $M$ carries a cyclic $U ( \g )$-module generator that is a $( U ( \gb ), H )$-eigenvector.\\
A $( U ( \g ), H )$-module $M$ is said to be an integrable module if it is a restricted weight module, we have $\dim \, \Span_\Bbbk \{ E_i^{(n)}F_i^{(m)} v \} < \infty$ for each $v \in M$ and $i \in \tI$, and it integrates to an algebraic $\mathop{SL} ( 2, i )$-action that is compatible with the $H$-action.
\end{defn}

\begin{lem}
Let $M$ be an integrable $U ( \g )$-module. Then, every $U ( \g )$-submodule of $M$ is again integrable.\hfill $\Box$
\end{lem}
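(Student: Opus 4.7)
The plan is to verify each structural condition in the definition of an integrable module for an arbitrary $U(\g)$-submodule $N \subset M$, by inheriting it from $M$. First, since $N$ is stable under $H$ and $U(\h)$, the complete reducibility of representations of the split torus $H$ (which the paragraph just before the definition explicitly invokes) gives a weight decomposition $N = \bigoplus_{\la \in X^*} N_\la$ with $N_\la = N \cap M_\la$, on which the operators $h(m)$ act semisimply; so $N$ is a weight module. The restrictedness $\dim N_\la \le \dim M_\la < \infty$ is immediate, and for any $v \in N$ and $i \in \tI$ the subspace $\Span_\Bbbk \{E_i^{(n)} F_i^{(m)} v\}$ lies in $N$ by $U(\g)$-stability and coincides with the finite-dimensional subspace computed inside $M$.

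The one nontrivial point is showing that the algebraic $\mathop{SL}(2, i)$-action on $M$ preserves $N$ and acts algebraically there. For each $v \in N$, the finite-dimensional space $V_v := \Span_\Bbbk \{E_i^{(n)} F_i^{(m)} v\}$ is the smallest subspace of $M$ containing $v$ and stable under all the divided powers $E_i^{(n)}, F_i^{(n)}$, and it carries an $\mathfrak{sl}(2,i)$-action via the Chevalley $\Z$-form. The key input is then the standard fact that a subspace of a finite-dimensional rational representation of the connected, simply connected group $\mathop{SL}(2, i)$ is group-stable precisely when it is closed under all divided powers of $E_i$ and $F_i$; this gives $\mathop{SL}(2, i) \cdot V_v = V_v$ inside $M$. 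Since $V_v \subset N$ by the first step, we obtain $\mathop{SL}(2, i) \cdot v \subset N$, and as $v$ was arbitrary, $N$ is $\mathop{SL}(2, i)$-stable. The induced action is algebraic because each $v \in N$ sits in the finite-dimensional $\mathop{SL}(2, i)$-stable subspace $V_v$, and the orbit map on $V_v$ coincides with its restriction from $M$; compatibility with the $H$-action is also inherited on $V_v$.

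I expect the only delicate point to be this reduction of group-stability of an $\mathfrak{sl}(2,i)$-submodule to divided-power stability in arbitrary characteristic; it is however a well-known feature of $\mathop{SL}(2)$ in the Chevalley setup (essentially the statement that the hyperalgebra controls rational representations of the simply connected group) and requires no input beyond what is already fixed in the paper.
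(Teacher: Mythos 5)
The paper gives no argument for this lemma at all --- it is stated with the proof omitted as immediate --- so there is nothing to compare against; your write-up is, in effect, the routine verification the author had in mind, and it is correct. Two small points are worth tightening. First, to invoke the criterion ``a subspace of a finite-dimensional rational $\mathop{SL}(2,i)$-representation is group-stable iff it is stable under the distribution algebra,'' you must first place $V_v$ inside a finite-dimensional $\mathop{SL}(2,i)$-stable subspace of $M$; this is available because an algebraic action on an infinite-dimensional space is a comodule structure and hence locally finite, and any finite-dimensional stable subspace containing $v$ is in particular stable under the divided powers (by the compatibility built into ``integrates to''), so it contains $V_v$. Second, the distribution algebra of $\mathop{SL}(2,i)$ is generated by $E_i^{(n)}, F_i^{(n)}$ \emph{together with} the $h(m)$'s, and the claim that $V_v$ is closed under all divided powers is itself not literally true for arbitrary $v$: commuting $F_i^{(k)}$ past $E_i^{(n)}$ produces terms involving $\binom{h}{j}$, which act by scalars only on weight vectors. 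Both issues disappear simultaneously if you first decompose $v$ into its $H$-weight components (legitimate, since $N$ is an $H$-stable subspace of a weight module and hence $N=\bigoplus_\la (N\cap M_\la)$, as you note) and run the argument for $v$ a weight vector; then $V_v$ is a sum of weight spaces, stability under the $h(m)$'s is automatic, and the hyperalgebra criterion applies cleanly. With these adjustments the proof is complete.
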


\begin{thm}[Mathieu \cite{Mat88,Mat89}]\label{pairing}
We have a non-dengenerate $\Bbbk$-linear pairing
$$(\bullet, \bullet) : U ( \gn^- ) \otimes \Bbbk [\hN^-] \ni ( P, f ) \mapsto ( P f ) ( 1 ) \in \Bbbk.$$
\end{thm}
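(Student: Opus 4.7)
The plan is to reduce the non-degeneracy assertion to the classical pairing between the distribution algebra at the identity (the hyperalgebra) and the coordinate ring of a smooth connected unipotent algebraic group over $\Bbbk$, and then pass to the limit.

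First, both sides of the pairing carry a compatible $H$-action: the adjoint action decomposes $U(\gn^-) = \bigoplus_\mu U(\gn^-)_\mu$ with $\mu \in -\sum_{i \in \tI} \Z_{\ge 0} \al_i$, and conjugation on $\hN^-$ decomposes $\Bbbk[\hN^-] = \bigoplus_\mu \Bbbk[\hN^-]_\mu$. Since $\Bbbk[\hN^-]_0 = \Bbbk$ (because $\hN^-$ is pro-unipotent) and evaluation at the identity is $H$-invariant, the pairing $(P,f)$ vanishes unless $P$ and $f$ have opposite weights. It therefore suffices to establish non-degeneracy on each finite-dimensional graded piece $U(\gn^-)_\mu \otimes \Bbbk[\hN^-]_{-\mu}$.

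Second, fix $\mu$ supported in $\Delta^-(n)$. The surjection $U(\gn^-) \twoheadrightarrow U(\gn^-(n))$ is an isomorphism in degree $\mu$, while $\Bbbk[\hN^-]_{-\mu}$ stabilizes to $\Bbbk[N^-(n')]_{-\mu}$ for $n'$ large enough, because any regular function on the pro-unipotent group that occupies only finitely many weight directions factors through some $N^-(n')$. The pairing thus restricts, in the relevant graded component, to
$$U(\gn^-(n)) \otimes \Bbbk[N^-(n')] \longrightarrow \Bbbk, \qquad (P,f) \mapsto (Pf)(1).$$
By the axioms of \cite[Definition 5.2.1]{Kum02} imposed on the Kac-Moody data, each $N^-(n)$ is smooth, connected, split unipotent, with hyperalgebra at the identity canonically identified with the specialization to $\Bbbk$ of the Chevalley-Kostant $\Z$-form $U(\gn^-(n))$. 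Under this identification the pairing becomes the canonical evaluation of distributions against regular functions, which is perfect on each weight component: $N^-(n)$ is isomorphic as a variety to affine space, and its coordinate ring is the weight-graded dual of the divided-power hyperalgebra.

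The main obstacle is the identification of the hyperalgebra of the algebraic group $N^-(n)$ with the Chevalley-Kostant divided-power form $U(\gn^-(n))$ in arbitrary characteristic, which is precisely why the $\Z$-form and the smoothness of $N^-(n)$ were built into the setup; granted that, passage to the limit $n \to \infty$ yields non-degeneracy in full.
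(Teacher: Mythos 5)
There is a genuine gap, and you have in fact put your finger on it yourself: the sentence beginning ``The main obstacle is the identification of the hyperalgebra of the algebraic group $N^-(n)$ with the Chevalley--Kostant divided-power form\dots granted that\dots'' concedes the entire content of the theorem. The identification is \emph{not} built into the setup. The setup only requires each $N^-(n)$ to be \emph{some} smooth connected unipotent algebraic group with Lie algebra $\gn^-(n)$, and in positive characteristic this data is very far from determining the group, let alone from identifying its distribution algebra with the image of the Chevalley--Kostant $\Z$-form $U_\Z(\gn^-(n))\otimes\Bbbk$ (which is generated only by the divided powers $F_i^{(n)}$ of \emph{simple} root vectors). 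Nor is this identification among the axioms of \cite[Definition 5.2.1]{Kum02}, which are purely group-theoretic (Tits-system type) conditions. Without it, two things fail in your argument: first, it is not even clear that $U(\gn^-)$ acts on $\Bbbk[\hN^-]$ so that the pairing $(P,f)\mapsto (Pf)(1)$ is defined (the divided powers must integrate to the group); second, even granting an action, if $U(\gn^-(n))$ were a proper subalgebra of the full hyperalgebra $\mathrm{Dist}(N^-(n))$, the pairing would be degenerate on the $\Bbbk[N^-(n)]$ side. So the step you label ``granted that'' is not a technical verification one can wave through; it is the theorem.

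The paper resolves this by running the logic in the opposite direction. Rather than starting from a group and computing its hyperalgebra, it starts from the algebra: $U(\gn^-)$ is a restricted $(U(\h),H)$-weight module, its Hopf structure (with $\Delta(F_i^{(n)})=\sum_m F_i^{(m)}\otimes F_i^{(n-m)}$) makes the restricted dual $U(\gn^-)^{\vee}$ a commutative bialgebra, and Mathieu's Lemme 2 of \cite{Mat89} identifies $\mathrm{Spec}\,U(\gn^-)^{\vee}$ with the pro-algebraic group attached to $\gn^-$; Lemme 3 then verifies that this particular group satisfies the conditions imposed on $\hN^-$ (and \cite[Theorem 6.1.17]{Kum02} supplies the remaining axioms for the 6-tuple). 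With $\Bbbk[\hN^-]=U(\gn^-)^{\vee}$ true by construction, non-degeneracy is immediate. Your weight-space decomposition and the reduction to the finite-dimensional quotients $N^-(n)$ are sound and compatible with this, but they only reorganize the problem; to close the gap you would need to either reproduce Mathieu's construction or cite \cite[Lemmes 2--3]{Mat89} at exactly the point where you currently write ``granted that.''
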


\begin{proof}
Note that $U (\gn^-)$ is equipped with a restricted $( U ( \h ), H )$-module structure arising from the adjoint action of $H$. Hence, the (restricted) $\Bbbk$-dual of $U ( \gn^- )$ is well-defined. Moreover, the natural Hopf algebra structure of $U ( \gn^- )$ (so that $\Delta ( F_i ^{(n)} ) = \sum_{m=0}^n F_i ^{(m)} \otimes F_i ^{(n-m)}$ for each $i \in \tI$ and $n \ge 0$) induces a commutative bialgebra structure on $U ( \gn^- )^{\vee}$. Then, $\mathrm{Spec} \, U ( \gn^- )^{\vee}$ is the pro-algebraic group associated to $\gn^-$ in \cite{Mat88, Mat89} by \cite[Lemme 2]{Mat89}.

By \cite[Lemme 3]{Mat89}, $\mathrm{Spec} \, U ( \gn^- )^{\vee}$ satisfies the conditions on $\hN^-$ listed above. By replacing the arguments in \cite[\S 6.1]{Kum02} involving the exponential maps to our pro-unipotent group structures of $\hN^-$ and unipotent one-parameter subgroups $\{ \rho_{\al} \}_{\al}$, we deduce that $(G^-,N ( H ), \hN^-, N, H, S)$ satisfies the conditions in \cite[Definition 5.2.1]{Kum02} by \cite[Theorem 6.1.17]{Kum02} and its proof.
\end{proof}

We define $P := \bigoplus_{i \in \tI} \Z \varpi_i$, $P_+ := \bigoplus_{i \in \tI} \Z_{\ge 0} \varpi_i$, and $P_{++} := \bigoplus_{i \in \tI} \Z_{\ge 1} \varpi_i$. For $\tJ \subset \tI$, we set $P_+ ^{\tJ}:= \bigoplus_{i \in \tI \backslash \tJ} \Z_{\ge 0} \varpi_i$. For each $\la \in P$, we have a Verma module $M ( \la )$ defined as:
$$M ( \la ) = U ( \g ) \otimes _{U ( \gb )} \Bbbk _{\la}.$$
The Verma modules are restricted weight modules and are generated by a unique vector $v_\la$ with $H$-weight $\la$. We define
$$L ( \la ) := M ( \la ) / \sum_{i \in \tI} U ( \g ) F_i^{(\left< \alpha_i^{\vee}, \la \right> + 1)} v_\la.$$
\begin{lem}\label{maxint}
For each $\la \in P_+$, the module $L ( \la )$ is the maximal integrable quotient of $M ( \la )$.
\end{lem}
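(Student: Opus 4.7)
The plan is to prove both directions of the characterization of $L(\la)$ separately: first that any integrable quotient of $M(\la)$ factors through $L(\la)$, and second that $L(\la)$ itself is integrable.

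For the easy direction, let $\pi : M(\la) \twoheadrightarrow Q$ be any integrable quotient. Set $\la_i := \langle \alpha_i^\vee, \la \rangle \ge 0$ for $i \in \tI$. The image $\pi(v_\la)$ is an $H$-weight vector of weight $\la$ annihilated by every $E_i$, so the $U(\mathfrak{sl}(2,i))$-submodule it generates is a highest weight module of weight $\la_i$. Since the $\mathop{SL}(2,i)$-action on $Q$ is algebraic (in particular, locally finite on $\pi(v_\la)$), this submodule must be the finite-dimensional irreducible of dimension $\la_i+1$, in which the divided power $F_i^{(\la_i+1)}$ annihilates the highest weight vector. Hence $\pi(F_i^{(\la_i+1)} v_\la) = 0$ for every $i \in \tI$, so the kernel of $\pi$ contains $\sum_i U(\g) F_i^{(\la_i+1)} v_\la$, and $\pi$ factors through $L(\la)$.

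For the harder direction, I need to check that $L(\la)$ is integrable. It is clear from the construction that $L(\la)$ is a restricted weight module: it is generated from $v_\la$ by $U(\gn^-)$, and each weight space of $U(\gn^-)$ is finite-dimensional. What must be shown is that for every $v \in L(\la)$ and every $i \in \tI$, the $\Bbbk$-span of $\{E_i^{(n)} F_i^{(m)} v\}_{n,m \ge 0}$ is finite-dimensional, and that the resulting local finite $\mathfrak{sl}(2,i)$-action integrates to $\mathop{SL}(2,i)$. I proceed by induction on the height of $\la - \mathrm{wt}(v) \in Q_+$. For the base case $v = v_\la$, the vector $F_i^{(\la_i+1)} v_\la$ vanishes in $L(\la)$ by definition, so the $U(\mathfrak{sl}(2,i))$-submodule of $L(\la)$ generated by $v_\la$ is the $(\la_i+1)$-dimensional irreducible. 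For the inductive step, write $v = F_j^{(n)} w$ with $w$ of smaller height; the case $j = i$ is immediate, and for $j \ne i$ one expands $E_i^{(k)} F_j^{(n)}$ and $F_i^{(k)} F_j^{(n)}$ by the Chevalley-Kostant commutation identities in $U_\Z(\g)$, then uses the inductive finiteness on $w$ to bound $k$.

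The main obstacle is packaging this Garland-Lepowsky style argument in a characteristic-free manner using divided powers. In characteristic zero this is Kumar \cite[Corollary 2.2.6]{Kum02}; for arbitrary $\Bbbk$, the argument carries over verbatim because the entire computation takes place inside $U_\Z(\g)$ (so it suffices to do it over $\Z$ and then specialize), as already exploited in Mathieu \cite[Chapter~I]{Mat88}. The remaining step of upgrading local finiteness of $\mathfrak{sl}(2,i)$ to an algebraic $\mathop{SL}(2,i)$-action is automatic because $L(\la)$ is a restricted weight module and $\Bbbk$ is algebraically closed: the integral $\mathop{SL}(2,i)$-action is built from the exponentials of $E_i$ and $F_i$ via their divided powers, and compatibility with $H$ is forced by the weight grading.
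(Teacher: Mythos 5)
Your proof is essentially correct and reaches the same statement by the classical route, but the hard direction is organized differently from the paper. The paper disposes of integrability of $L(\la)$ one simple root at a time: for each $i \in \tI$ it observes that $M(\la)/U(\g)F_i^{(\langle\al_i^{\vee},\la\rangle+1)}v_\la$ is the maximal $\mathop{SL}(2,i)$-integrable quotient of $M(\la)$, the key input being that the pro-nilradical of the $i$-th standard parabolic is $\mathrm{ad}(\mathfrak{sl}(2,i))$-stable (so that this quotient is, as an $\mathop{SL}(2,i)$-module, an integrable module tensored with the $(\langle\al_i^{\vee},\la\rangle+1)$-dimensional Weyl module); intersecting over $i$ then gives the lemma, with the characteristic-zero case simply cited from Kumar. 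You instead run the Garland--Lepowsky/Kac induction on the height of $\la - \mathrm{wt}(v)$, expanding $E_i^{(k)}F_j^{(n)}$ and $F_i^{(k)}F_j^{(n)}$ in $U_\Z(\g)$. Both are standard and both buy the characteristic-free statement because everything lives in the Kostant $\Z$-form; the paper's version is slicker in that it avoids the commutator bookkeeping (your inductive step, ``uses the inductive finiteness on $w$ to bound $k$,'' is the one place you would actually have to write out the divided-power commutation identities and check that $\mathrm{ad}(F_i)$ acts locally nilpotently in the divided-power sense). One genuine, though harmless, slip: in your easy direction, the cyclic $U(\mathfrak{sl}(2,i))$-submodule generated by $\pi(v_\la)$ in positive characteristic is \emph{not} in general the irreducible module; it is a cyclic highest-weight quotient of the Weyl module, still of dimension $\langle\al_i^{\vee},\la\rangle+1$ by Weyl-group invariance of the character of a finite-dimensional algebraic $\mathop{SL}(2,i)$-module, and the vanishing $F_i^{(\langle\al_i^{\vee},\la\rangle+1)}\pi(v_\la)=0$ that you actually need still holds. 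State it that way rather than via irreducibility.
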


\begin{proof}
The assertion is \cite[Lemma 2.1.7]{Kum02} when $\mathrm{char} \, \Bbbk = 0$. Its proof also asserts that every integrable module is a quotient of $L ( \la )$ for $\mathrm{char} \, \Bbbk > 0$ (as an effect of our definition of integrality). For each $i \in \tI$, the module
$$M ( \la ) / U ( \g ) F_i^{(\left< \alpha_i^{\vee}, \la \right> + 1)} v_\la$$
is $\mathop{SL} ( 2, i )$-integrable (recall that the pro-unipotent radical of the parabolic subgroup corresponding to $i \in \mathtt I$ is $\mathop{SL} ( 2, i )$-stable by the GCM condition $\left< \al_i^{\vee}, \al_j \right> \le 0$ when $i \neq j$ \cite[\S 1.1]{Kac}), and it is the maximal $\mathop{SL} ( 2, i )$-integrable quotient of $M ( \la )$. Hence, we deduce that $L ( \la )$ is the maximal integrable quotient of $M ( \la )$ as required.
\end{proof}

\begin{cor}
For each $\la \in P_+$, the $H$-character of $L ( \la )$ obeys the Weyl-Kac character formula.
\end{cor}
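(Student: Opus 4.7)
The plan is to reduce the positive-characteristic case to characteristic zero via a Chevalley--Kostant $\Z$-form argument, using Lemma \ref{maxint} as the bridge. In characteristic zero the Weyl--Kac formula for $L(\la)$ with $\la \in P_+$ is classical (Kac; see e.g.\ \cite[Thm.~2.1.7]{Kum02}), so I focus on $\mathrm{char}\,\Bbbk = p > 0$.

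The first step is to build an admissible $\Z$-form. Over $\C$, let $v_\la$ be a highest weight vector of $L_{\C}(\la)$, and set $L_{\Z}(\la) := U_{\Z}(\g) \cdot v_\la \subset L_{\C}(\la)$. Since $U_{\Z}(\g)$ respects the $H$-weight decomposition and $L_{\Z}(\la) \otimes_{\Z} \C = L_{\C}(\la)$ (both being $U_{\C}(\g) \cdot v_\la$), each weight space $L_{\Z}(\la)_\mu$ is finitely generated and torsion-free, hence $\Z$-free of rank $\dim_{\C} L_{\C}(\la)_\mu$. Therefore $L_{\Z}(\la) \otimes_{\Z} \Bbbk$ carries exactly the Weyl--Kac character.

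The second step compares this specialization with $L(\la)$ over $\Bbbk$. The specialization is an integrable $U(\g)$-module over $\Bbbk$ (divided-power local nilpotency of each $F_i$ on individual vectors is preserved under base change, and thus so is the $\mathop{SL}(2,i)$-integrability), generated by $v_\la \otimes 1$ and satisfying $F_i^{(\langle \al_i^\vee, \la\rangle+1)}(v_\la \otimes 1) = 0$. By the defining presentation of $L(\la)$, this yields a canonical surjection $L(\la) \twoheadrightarrow L_{\Z}(\la) \otimes_{\Z} \Bbbk$, providing one direction of the character inequality.

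The main obstacle is to promote this surjection to an isomorphism. Setting $L'_{\Z}(\la) := M_{\Z}(\la) / \sum_i U_{\Z}(\g)\, F_i^{(\langle \al_i^\vee, \la\rangle+1)} v_\la$, one has $L'_{\Z}(\la) \otimes_{\Z} \Bbbk = L(\la)$ tautologically and $L'_{\Z}(\la) \otimes_{\Z} \C = L_{\C}(\la)$ by Lemma \ref{maxint} in characteristic zero. The assertion reduces to showing that $L'_{\Z}(\la)$ is $\Z$-torsion-free in each weight space, equivalently $L'_{\Z}(\la) = L_{\Z}(\la)$. This is the substantive point; it follows from Mathieu's study of admissible lattices \cite{Mat88, Mat89} --- the same framework already invoked in Theorem \ref{pairing} via the pairing with $\Bbbk[\hN^-]$ --- essentially because $L'_{\Z}(\la)$ may be identified with the universal admissible lattice for $L_{\C}(\la)$. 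Granting this, the rank computation of the first step yields the Weyl--Kac character formula for $L(\la)$.
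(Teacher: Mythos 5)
Your overall strategy---reduction modulo $p$ of a $\Z$-form---is genuinely different from the paper's, and the parts you actually carry out are fine: $L'_{\Z}(\la)\otimes_{\Z}\Bbbk = L(\la)$ by right-exactness of the tensor product, and the surjection $L(\la)\twoheadrightarrow L_{\Z}(\la)\otimes_{\Z}\Bbbk$ gives one character inequality. But the proof does not close. The entire content of the statement is concentrated in what you call the ``substantive point,'' namely that $L'_{\Z}(\la)$ is torsion-free, equivalently that $\sum_i U_{\Z}(\g)F_i^{(\langle\al_i^{\vee},\la\rangle+1)}v_\la$ exhausts $\ker\bigl(M_{\Z}(\la)\to L_{\Z}(\la)\bigr)$, and the appeal to ``Mathieu's study of admissible lattices'' does not discharge it. A priori $L'_{\Z}(\la)$ is not a lattice in $L_{\C}(\la)$ at all: it is an extension of the free module $L_{\Z}(\la)$ by a torsion module $T$, and $T=0$ is \emph{equivalent} to the character formula you are trying to prove, given your first two steps. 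So the argument is circular at the decisive moment. Mathieu's admissible lattices are $U_{\Z}(\g)$-stable lattices sitting inside $L_{\C}(\la)$; the presented module $L'_{\Z}(\la)$ only becomes one after you have shown it has no torsion, and neither \cite{Mat88} nor \cite{Mat89} states this presentation of the minimal admissible lattice over $\Z$. Even in finite type that presentation is a nontrivial theorem, and for symmetrizable $\g$ one usually extracts it from global crystal bases---a tool the paper deliberately avoids so as to cover non-symmetrizable $\g$.

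The paper's own proof leans on a different, cleanly citable part of Mathieu's work: Th\'eor\`eme 3 of \cite{Mat88} (quoted as Theorem \ref{thin-coh}) computes the characters of the thin Demazure modules $L_w(\la) = U(\gn)v_{w\la}\subset L(\la)$ in every characteristic via Demazure's formula, whose limit over $w$ is the Weyl--Kac formula; since $\bigcup_{w} L_w(\la)$ is a $U(\g)$-submodule of $L(\la)$ containing the cyclic generator $v_\la$, it is all of $L(\la)$. If you want to keep your $\Z$-form framework, note that the torsion-freeness of $L'_{\Z}(\la)$ follows \emph{from} the corollary rather than implying it, so the logical order must be reversed.
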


\begin{proof}
This is \cite[Theorem 8.3.1]{Kum02} when $\mathrm{char} \, \Bbbk = 0$. When $\mathrm{char} \, \Bbbk > 0$, the arguments in \cite{Mat88} asserts that some integrable submodule of $L ( \la )$ obtained as a successive application of Demazure-Joseph functors obeys the Weyl-Kac character formula. As such a submodule contains $v_{\la}$, it must be the whole $L (\la)$.
\end{proof}

For each $w \in W$ and $\la \in P_+$, we have a unique non-zero vector $v_{w\la} \in L ( \la )$ of weight $w \la$ up to scalar. We define the thin Demazure module and thick Demazure module as:
$$L _w ( \la ) := U ( \gn ) v_{w \la}, \hskip 5mm L ^w ( \la ) := U ( \gn^- ) v_{w \la} \hskip 5mm \subset L ( \la ).$$
These admit $H$-eigenspace decompositions.

We define the tensor product of two restricted weight modules $M, N$ as:
$$M \otimes N := \bigoplus_{\la, \mu \in X^*} M_{\la} \otimes N_{\mu}.$$
We define the dual of a restricted weight module $M$ as:
$$M^{\vee} := \bigoplus_{\la \in X^*} M_\la^*,$$
for which the natural inclusion $M^{\vee} \subset M^*$ defines a $H$-submodule. The completion of a restricted weight module $M$ is defined as:
$$M^{\wedge} := \prod_{\la \in X^*} M_\la.$$
It is straight-forward to see that if $M$ admits a Lie algebra action that contains $\h$ whose action prolongs to the $H$-action, then so are $M^{\vee}$ and $M^{\wedge}$. Note that the $H$-action on $M^{\vee}$ is $H$-finite.

\begin{defn}[thin flag varieties; \cite{Kum02} \S 7.1]
The thin flag variety $X$ is defined set-theoretically as $G^+ ( \Bbbk ) / \widehat{B}^+ ( \Bbbk )$, and the generalized thin flag variety $X_{\tJ}$ for $\tJ \subset \tI$ is defined set-theoretically as $G^+ ( \Bbbk ) / \widehat{B}^+_{\tJ} ( \Bbbk )$. In particular, we have $X = X_{\emptyset}$. Their indscheme structures are given through an embedding into $\cup_{w \in W} \P ( L_w ( \la ) )$ ($=\P ( L ( \la ) )$) for $\la \in P_+^{\tJ}$ (see \cite[\S 7]{Kum02}). For each $w \in W$, we set $X_w := X \cap \P ( L_w ( \la ) )$ and $X_{w, \tJ} := X_{\tJ} \cap \P ( L_w ( \la ) )$, and call them the thin Schubert variety and the generalized thin Schubert variety, respectively.
\end{defn}

\begin{rem}
The (ind-)scheme structures of $X, X_{\tJ}, X_w, X_{w,\tJ}$ are independent of the choice of $\la$ (see \cite[Theorem 7.1.15 and Remark 7.1.16]{Kum02} and Mathieu \cite[Corollaire 2]{Mat89}).
\end{rem}

By \cite[Proposition 7.1.15]{Kum02}, we know that $X = \bigcup_w X_w$ and $X_{\tJ} = \bigcup_w X_{w,\tJ}$.

We have an embedding $L ( \la ) \subset L ( \la )^{\wedge}$ for $\la \in P_+$. The group $G^{-}$ acts on $L ( \la )^{\wedge}$, while the group $G^+$ acts on $L ( \la ) \subset L ( \la )^{\wedge}$. Let $\mathbb O^e$ be the $\hN^-$-orbit of $[v_{\la}]$ in $\P ( L ( \la )^{\wedge} )$, whose scheme structure is independent of the choice of $\la \in P_{++}$.

\begin{defn}[thick flag manifolds; \cite{Kas89} \S 5.8]
The thick flag manifold $\bX'$ is defined set-theoretically as the union of $N ( H ) ( \Bbbk )$-translates of $\mathbb O^e ( \Bbbk )$.
\end{defn}

\begin{rem}
In the following, we only need to use the fact that the scheme structure of the thick flag manifold $\bX'$ given in \cite{Kas89} has $\bX' ( \Bbbk ) = N ( H ) ( \Bbbk ) \cdot \mathbb O^e ( \Bbbk )$ as its set of $\Bbbk$-valued points, it admits the $G^-$-action, and it has $\mathbb O^e$ as its $\hB^-$-stable (affine open) subscheme. Note that we have $\mathbb O^e\cong \hN^-$, and its scheme structure is the same as these transported from $\P ( L ( \la )^{\wedge} )$ (for every choice of $\la \in P_{++}$) or the Grassmannian employed in \cite{Kas89}. 
\end{rem}

\begin{rem}
Assume $\g$ {\it not} to be of finite type. By construction, we easily find an inclusion $X \subset \bX'$. This inclusion cannot be an equality as the dimension of $X$ is countable, while the dimension of $\bX'$ is uncountable. In general, $X$ is not smooth (\cite{FGT}, but can be formally smooth \cite{Zhu17}), while $\bX'$ is always smooth (in the sense it is a union of affine spaces) by construction.
\end{rem}

Theorem \ref{pairing} identifies $M ( \la )^{\vee}$ with a rank one $\Bbbk [\hN^-]$-module. This also induces an inclusion
$$L ( \la ) ^{\vee} \hookrightarrow M ( \la )^{\vee} \cong \Bbbk [\hN^-] \otimes _{\Bbbk} \Bbbk _{- \la} \hskip 5mm \la \in P_+.$$
Note that $M ( \la )^{\vee}$ naturally admits an action of $U ( \g )$, with a unique cocyclic $H$-eigenvector of weight $- \la$. Hence, we have an inclusion
\begin{equation}
\bigoplus_{\la \in P_+} L ( \la ) ^{\vee} \subset \bigoplus_{\la \in P_+} M ( \la ) ^{\vee} \cong \Bbbk [\hN^-] \otimes \bigoplus_{\la \in P_+} \Bbbk_{-\la} \subset \Bbbk [\hB^-],\label{inclmod}
\end{equation}
where the RHS is a commutative ring.

\begin{lem}\label{incl}
For each $\lambda, \mu \in P_+$, we have a unique $U ( \g )$-module morphism $($up to a scalar$)$
$$m_{\la, \mu} : L ( \la )^{\vee} \otimes L ( \mu ) ^{\vee} \longrightarrow L ( \lambda + \mu )^{\vee}$$
that makes $\bigoplus_{\la \in P_+} L ( \la )^{\vee}$ into an integral commutative subring of $\Bbbk [\hN^-]$. Moreover, the map $m_{\la, \mu}$ is surjective for every $\la, \mu \in P_+$, and the ring $\bigoplus_{\la \in P_+} L ( \la )^{\vee}$ is generated by $\bigoplus _{i \in \tI} L ( \varpi_i )^{\vee}$.
\end{lem}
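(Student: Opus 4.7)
My plan is to construct $m_{\la,\mu}$ by dualizing a natural $U(\g)$-map $L(\la+\mu) \to L(\la)\otimes L(\mu)$, identify the resulting multiplication with the product in $\Bbbk[\hB^-]$ inherited through (\ref{inclmod}), and then obtain surjectivity by a descent from characteristic zero.

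For the construction, I observe that $v_\la \otimes v_\mu \in L(\la)\otimes L(\mu)$ is a highest-weight vector of weight $\la+\mu$ satisfying the defining relations of $L(\la+\mu)$: each $E_j$ annihilates it by the primitive coproduct, while the divided-power coproduct $\Delta(F_i^{(n)}) = \sum_{k+l=n} F_i^{(k)} \otimes F_i^{(l)}$, combined with $F_i^{(m)} v_\la = 0$ for $m > \langle\al_i^\vee, \la\rangle$ in $L(\la)$ (analogously for $L(\mu)$), forces $F_i^{(\langle \al_i^\vee, \la+\mu \rangle + 1)}(v_\la \otimes v_\mu) = 0$. By Lemma \ref{maxint} the induced map $M(\la+\mu) \to L(\la)\otimes L(\mu)$ descends to a $U(\g)$-map $\phi_{\la,\mu} : L(\la+\mu) \to L(\la) \otimes L(\mu)$. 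Since $L(\la)\otimes L(\mu)$ is a restricted weight module (each $H$-weight decomposes into finitely many summands of finite-dimensional weight spaces), one has $(L(\la) \otimes L(\mu))^\vee = L(\la)^\vee \otimes L(\mu)^\vee$, and dualizing $\phi_{\la,\mu}$ yields $m_{\la,\mu}$; uniqueness up to a scalar is immediate from the one-dimensionality of the $(-\la-\mu)$-weight spaces on both sides. To identify this with the product in $\Bbbk[\hB^-]$, a direct evaluation using $\phi_{\la,\mu}(n \cdot v_{\la+\mu}) = nv_\la \otimes nv_\mu$ gives $m_{\la,\mu}(f\otimes g)(n\cdot v_{\la+\mu}) = f(nv_\la)\,g(nv_\mu)$, which is the pointwise product in $\Bbbk[\hN^-]$. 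Hence $\bigoplus_\la L(\la)^\vee$ is closed under the multiplication inherited from $\Bbbk[\hB^-]$; commutativity and associativity transport from $\Bbbk[\hB^-]$, and integrality follows because $\hB^- = H\hN^-$ is integral, so $\Bbbk[\hB^-]$ is a domain.

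The main obstacle is the surjectivity of $m_{\la,\mu}$, equivalent to the injectivity of $\phi_{\la,\mu}$. My approach is a Chevalley--Kostant integral-form argument: setting $L_\Z(\nu) := U_\Z(\g) v_\nu \subset L_\Q(\nu)$, the $\Z$-form is free and satisfies $L_\Z(\nu) \otimes_\Z \Bbbk \cong L(\nu)$ by the Weyl--Kac character formula proved above. Over $\Q$, $L_\Q(\la+\mu)$ is irreducible and embeds as the Cartan component of $L_\Q(\la)\otimes L_\Q(\mu)$, whence $L_\Z(\la+\mu) \hookrightarrow L_\Z(\la)\otimes_\Z L_\Z(\mu)$. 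To preserve injectivity after $\otimes_\Z\Bbbk$ one needs the cokernel to be $\Z$-flat; this is where Mathieu's good-filtration results \cite{Mat88, Mat89} enter, providing a filtration of $L_\Z(\la)\otimes L_\Z(\mu)$ by $\Z$-free Weyl modules with $L_\Z(\la+\mu)$ as the Cartan step. Then $\phi_{\la,\mu}$ remains injective over $\Bbbk$, so $m_{\la,\mu}$ is surjective. For the final generation assertion, writing $\la = \sum_i n_i\varpi_i$ with $n_i \ge 0$ and iterating the now-surjective multiplications by $L(\varpi_i)^\vee$'s expresses $L(\la)^\vee$ as a product of fundamental duals, so $\bigoplus_\la L(\la)^\vee$ is generated as a ring by $\bigoplus_{i \in \tI} L(\varpi_i)^\vee$.
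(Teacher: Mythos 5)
Your construction of $m_{\la,\mu}$ by checking that $v_\la\otimes v_\mu$ satisfies the defining divided-power relations, the dualization, the identification with the (Hopf-dual) product on $\Bbbk[\hN^-]$, and the deduction of commutativity, integrality, and generation by $\bigoplus_i L(\varpi_i)^\vee$ all follow essentially the paper's route; the paper phrases the closure of $L(\la)^\vee\cdot L(\mu)^\vee$ under multiplication via the $HN^-$-equivariance of the product on $\Bbbk[\hB^-]$ plus integrability of tensor products, but that is the same computation as your pointwise-product formula. Your reduction of surjectivity of $m_{\la,\mu}$ to injectivity of $\phi_{\la,\mu}$ is also valid, since restricted duality is exact on restricted weight modules. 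Where the paper simply cites \cite[Corollaire 2]{Mat89} for surjectivity, you attempt to unpack the mechanism via Chevalley--Kostant $\Z$-forms and a Weyl filtration of $L_\Z(\la)\otimes_\Z L_\Z(\mu)$ with $L_\Z(\la+\mu)$ as the bottom (pure) step; that unpacking is the right shape of Mathieu's argument.

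The one genuine gap is the launching point of that sketch: you assert that $L_\Q(\la+\mu)$ is irreducible. The paper works with an arbitrary generalized Cartan matrix, and for non-symmetrizable $\g$ the module $L(\nu)$ of the paper is the \emph{maximal integrable quotient} of $M(\nu)$; its irreducibility in characteristic zero is not known in that generality (the Casimir/Shapovalov-form argument is unavailable; cf.\ the discussion in \cite[\S 2]{Kum02}). Hence the injectivity of $\phi_{\la,\mu}$ over $\Q$ cannot be obtained from irreducibility in the generality the lemma claims; it must instead be extracted from the same geometric input you invoke for the flatness of the cokernel (Mathieu's and Kumar's results on Demazure characters and normality, i.e.\ precisely \cite[Corollaire 2]{Mat89}), at which point your two-step structure collapses back into the paper's single citation. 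If you restrict to symmetrizable $\g$, your argument is fine as written; as a proof of the stated lemma for arbitrary Kac--Moody algebras, this step needs to be replaced or justified.
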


\begin{proof}
By the comparison of the defining equation, we have a unique $U ( \g )$-module map (up to scalar)
$$m_{\la, \mu}^{\vee} : L ( \lambda + \mu ) \rightarrow L ( \la ) \otimes L ( \mu )$$
that respects the $H$-weight decomposition. By taking the dual, we obtain the desired map. Each $L ( \la )$ is a quotient of $M ( \la )$, and we have an isomorphism
$$M ( \la )^{\vee} \cong \Bbbk [\hN^-] \otimes _{\Bbbk} \Bbbk _{- \la}$$
as $U ( \gn^- )$-modules. The $HN^-$-equivariant multiplication of $\Bbbk [\hB^-]$ is uniquely determined by that of the $N^-$-fixed elements, that is $\Bbbk [X^*]$. This forces $L ( \la )^{\vee} \cdot L ( \mu )^{\vee} \subset M ( \la + \mu )^{\vee}$ inside $\Bbbk [\hB^-]$. Since the tensor product of integrable modules is integrable, we deduce that $L ( \la )^{\vee} \cdot L ( \mu )^{\vee} \subset L ( \la + \mu )^{\vee}$ inside $\Bbbk [\hB^-]$. Therefore, the inclusion (\ref{inclmod}) respects the product structure (uniquely) induced by $m_{\la,\mu}^{\vee}$. The resulting ring is commutative and integral by \cite[Lemme 2]{Mat89}, and its multiplication maps are surjective by \cite[Corollaire 2]{Mat89}.

The commutativity of the product and the integrality of $\bigoplus_{\la \in P_+} L ( \la )^{\vee}$ can be also deduced from these of $\Bbbk [\hB^-]$ (though our Theorem \ref{pairing} depends on these facts through \cite[Lemme 2]{Mat89} unless we employ the theory of global base \cite{Kas91, Kas94} to prove it by additionally assuming $\g$ is symmetrizable).
\end{proof}

\begin{defn}\label{mproj}
Let $\tJ \subset \tI$. For a $P_+^{\tJ}$-graded ring $R = \bigoplus_{\la \in P_+^{\tJ}} R_{\la}$ with $R_0 = \Bbbk$ that is generated by $\bigoplus_{i \in \tI\setminus\tJ} R_{\varpi_i}$, we define $\mathrm{Proj}_{\tJ} \, R$ to be
$$\mathrm{Proj}_{\tJ} \, R := \left( \mathrm{Spec} \, R \setminus \{ x \in \mathrm{Spec} \, R \mid x \not\equiv 0 \text{ on } R_{\varpi_i} \hskip 2mm \forall i \in \tI \setminus \tJ \} \right) / H,$$
where $H$ acts on $R_{\varpi_i}$ through the character $\varpi_i$ for each $i \in \tI$. We might drop subscript $\tJ$ when the meaning is clear from the context.
\end{defn}

\begin{rem}
We note that our condition guarantees $\mathrm{Proj}_{\tJ} \, R \subset \prod_{i\in\tI \setminus \tJ} \P ( R_{\varpi_i} ^* )$, that in turn implies that $P_+^{\tJ}$ is in the closure of the ample cone of $\mathrm{Proj}_{\tJ} \, R$.
\end{rem}

We denote the ring $\bigoplus_{\la \in P_+} L ( \la )^{\vee}$ in Lemma \ref{incl} by $R$. We define
$$\bX := \Proj \, R.$$
Note that each $\mathop{SL} ( 2, i )$ $(i \in I)$ and $H$ acts on $L ( \la )^{\vee}$, and hence on $\bX$. Hence, we derive an action of $N ( H )$ on $\bX$. By construction, we have a line bundle $\cO_{\bX} ( \la )$ on $\bX$ for each $\la \in P$. 

\begin{cor}\label{tmult}
For each $w \in W$ and $\la, \mu \in P_+$, the multiplication map $m_{\la, \mu}$ of $R$ induces a $U ( \gn ) $-module map and a $U ( \gn^- )$-modules map
$$m'_{\la, \mu} : L _w ( \la )^{\vee} \otimes L _w ( \mu ) ^{\vee} \to L _w ( \lambda + \mu )^{\vee}, \hskip 3mm m''_{\la, \mu} : L ^w ( \la )^{\vee} \otimes L ^w ( \mu ) ^{\vee} \to L ^w ( \lambda + \mu )^{\vee}$$
that are surjective and associative.
\end{cor}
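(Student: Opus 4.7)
The strategy would be to dualize the surjection $m_{\la,\mu}$ of Lemma \ref{incl} into the canonical $U(\g)$-equivariant comultiplication $m_{\la,\mu}^{\vee} : L(\la+\mu) \to L(\la) \otimes L(\mu)$, normalized so that $v_{\la+\mu} \mapsto v_\la \otimes v_\mu$, and then verify that it carries $L_w(\la+\mu)$ into $L_w(\la) \otimes L_w(\mu)$ (respectively, $L^w(\la+\mu)$ into $L^w(\la) \otimes L^w(\mu)$); the corollary would then follow by dualizing this containment.

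\textbf{Key identification.} Pick a lift $\tilde w \in N(H)$ of $w \in W$. Since $L(\nu)$ is integrable for each $\nu \in P_+$, $\tilde w$ acts on $L(\nu)$ and sends $v_\nu$ to a non-zero scalar multiple of $v_{w\nu}$. Because $m_{\la,\mu}^{\vee}$ is $H$-equivariant and $\mathop{SL}(2,i)$-equivariant for every $i \in \tI$, it is $N(H)$-equivariant. Applying $\tilde w$ to the normalized identity $m_{\la,\mu}^{\vee}(v_{\la+\mu}) = v_\la \otimes v_\mu$ would then give
$$
m_{\la,\mu}^{\vee}(v_{w(\la+\mu)}) \in \Bbbk^{\times} \cdot (v_{w\la}\otimes v_{w\mu}).
$$

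\textbf{Stability of Demazure submodules.} Since $\gn \subset \g$ is a Lie subalgebra, the coproduct of $U(\g)$ restricts to $\Delta(U(\gn)) \subset U(\gn)\otimes U(\gn)$, so $U(\gn)$ preserves tensor products of $\gn$-stable subspaces. Combining this with the key identification yields
$$
m_{\la,\mu}^{\vee}(L_w(\la+\mu)) = U(\gn)\cdot m_{\la,\mu}^{\vee}(v_{w(\la+\mu)}) \subset U(\gn)(v_{w\la}\otimes v_{w\mu}) \subset L_w(\la)\otimes L_w(\mu).
$$
The same reasoning with $\gn^-$ in place of $\gn$ would produce $m_{\la,\mu}^{\vee}(L^w(\la+\mu)) \subset L^w(\la)\otimes L^w(\mu)$.

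\textbf{Descent, surjectivity, associativity.} Pairing any element of $\ker(L(\la)^{\vee} \otimes L(\mu)^{\vee} \twoheadrightarrow L_w(\la)^{\vee} \otimes L_w(\mu)^{\vee})$ with $y \in L_w(\la+\mu)$ gives $0$, since $m_{\la,\mu}^{\vee}(y)$ sits in $L_w(\la)\otimes L_w(\mu)$. Hence $m_{\la,\mu}$ descends to the required $m'_{\la,\mu}$, which inherits $U(\gn)$-equivariance from the $U(\g)$-equivariance of $m_{\la,\mu}$. Surjectivity of $m'_{\la,\mu}$ then follows from surjectivity of $m_{\la,\mu}$ (Lemma \ref{incl}) combined with surjectivity of the restriction $L(\la+\mu)^{\vee} \twoheadrightarrow L_w(\la+\mu)^{\vee}$; associativity is inherited from the associativity of the commutative multiplication on $R$. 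The construction and properties of $m''_{\la,\mu}$ would be identical with $U(\gn)$ replaced by $U(\gn^-)$. The main obstacle is the key identification, which hinges on the $N(H)$-equivariance of $m_{\la,\mu}^{\vee}$; once that is in place, all remaining steps are essentially formal.
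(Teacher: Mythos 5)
Your proposal is correct and follows essentially the same route as the paper: dualize $m_{\la,\mu}$ to the inclusion $L(\la+\mu)\hookrightarrow L(\la)\otimes L(\mu)$, use $m_{\la,\mu}^{\vee}(v_{w(\la+\mu)})=v_{w\la}\otimes v_{w\mu}$ together with the coproduct on $U(\gn)$ (resp. $U(\gn^-)$) to land in $L_w(\la)\otimes L_w(\mu)$ (resp. $L^w(\la)\otimes L^w(\mu)$), and dualize back. Your ``key identification'' step via $N(H)$-equivariance is a welcome justification of an identity the paper simply asserts, and the remaining descent, surjectivity, and associativity arguments match the paper's.
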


\begin{proof}
By the dual of Lemma \ref{incl}, we have $L ( \la + \mu ) \subset L ( \la )\otimes L ( \mu )$.

By $m _{\la, \mu} ^{\vee} ( v_{w ( \la + \mu )} ) = v_{w \la} \otimes v_{w \mu}$, we deduce that the inclusion $L ( \la + \mu ) \subset L ( \la )\otimes L ( \mu )$ yields inclusions $L_w ( \la + \mu ) \subset L_w ( \la )\otimes L_w ( \mu )$ and $L^w ( \la + \mu ) \subset L^w ( \la )\otimes L^w ( \mu )$. Hence, the multiplication map $m_{\la, \mu}$ induce well-defined surjective maps
$$m'_{\la, \mu} : L _w ( \la )^{\vee} \otimes L _w ( \mu ) ^{\vee} \to L _w ( \lambda + \mu )^{\vee}, \hskip 3mm m''_{\la, \mu} : L ^w ( \la )^{\vee} \otimes L ^w ( \mu ) ^{\vee} \to L ^w ( \lambda + \mu )^{\vee}$$
that define quotient rings of $R$ (and hence they are associative).
\end{proof}

For each $w \in W$, we have two commutative algebras:
$$R^w := \bigoplus_{\la \in P_+} L^w ( \la )^{\vee}, \hskip 3mm \text{and} \hskip 3mm R_w := \bigoplus_{\la \in P_+} L_w ( \la )^{\vee},$$
whose multiplications are given in Corollary \ref{tmult}.

We have a natural $G^+$-equivariant line bundle $\cO _{X_w} ( \la )$ for each $w \in W$ and $\la \in P_+$, and we have a natural $G^+$-equivariant line bundle $\cO _{X_{w,\tJ}} ( \la )$ for each $w \in W$ and $\la \in P_+^{\tJ}$ (cf. \cite[\S 7.2]{Kum02}).

\begin{thm}[Mathieu \cite{Mat88} Th\'eor\`eme 3, cf. \cite{Kum02} Theorem 8.2.2]\label{thin-coh}
For each $\la \in P_+$, we have
$$H^i ( X_w, \cO _{X_w} ( \la ) ) \cong \begin{cases} L_w ( \la )^{\vee} & (i = 0) \\\{0\} & (i > 0)\end{cases}.$$
The analogous assertion holds for generalized thin Schubert varieties corresponding to $\tJ \subset \tI$ for every $\la \in P_+^{\tJ}$. \hfill $\Box$
\end{thm}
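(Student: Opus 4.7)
The plan is to reduce the calculation on the (possibly singular) Schubert variety $X_w$ to a Bott--Samelson resolution. For a reduced expression $\bi = (i_1,\ldots,i_\ell)$ of $w$, one forms the Bott--Samelson variety $Z_{\bi}$ as the iterated $\P^1$-bundle obtained by successively quotienting the product of minimal parabolics $P_{i_1} \times \cdots \times P_{i_\ell}$ associated to $\mathop{SL}(2,i_k)$ by the natural $\hB^+$-actions. This comes with a proper, birational morphism $\pi_{\bi} : Z_{\bi} \to X_w$, and the pullback $\pi_{\bi}^* \cO_{X_w}(\la)$ has a natural description as a line bundle built from the characters $s_{i_k} s_{i_{k+1}} \cdots s_{i_\ell} \la$ at each step of the tower.

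The first key step is to establish that $\pi_{\bi}$ is a rational resolution, i.e. $\pi_{\bi,*} \cO_{Z_{\bi}} = \cO_{X_w}$ and $R^i \pi_{\bi,*} \cO_{Z_{\bi}} = 0$ for $i>0$. In positive characteristic this is the technical heart of the argument: one constructs a Frobenius splitting of $Z_{\bi}$ that compatibly splits the union of the codimension-one Bott--Samelson subvarieties (corresponding to omitting one of the $i_k$), from which the rationality and the vanishing of higher cohomology of pullbacks of nef line bundles both follow. This is precisely Mathieu's main construction in \cite{Mat88,Mat89}. Combined with the projection formula, the Leray spectral sequence yields $H^i(X_w, \cO_{X_w}(\la)) \cong H^i(Z_{\bi}, \pi_{\bi}^* \cO_{X_w}(\la))$ for all $i \ge 0$.

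The second key step is to compute the right-hand side by induction on $\ell$ using the tower $Z_{\bi} \to Z_{\bi'} \to \cdots \to \mathrm{pt}$, where $\bi' = (i_2,\ldots,i_\ell)$. At each stage one has a $\P^1$-bundle; dominance of $\la$ and the recursive structure ensure that the relative line bundle restricts to $\cO(n)$ with $n \ge 0$ on every fiber, so the higher direct images vanish and the zeroth direct image is a Demazure-style induction from the preceding stage. Iterating, one obtains $H^{>0} = 0$ together with an explicit filtration of $H^0(X_w, \cO_{X_w}(\la))$ whose character satisfies the Demazure recursion; this identifies it with $L_w(\la)^{\vee}$ via the Demazure character formula, independently of the reduced expression chosen. (The identification is naturally $U(\gn)$-equivariant because the construction uses the $\hB^+$-action throughout.)

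The hard part is the rational-resolution step in positive characteristic, which really relies on the explicit Frobenius splitting of $Z_{\bi}$ compatibly with its boundary; the present paper will in fact transplant exactly this kind of statement to the thick setting. For the parabolic variant $X_{w,\tJ}$ with $\la \in P_+^{\tJ}$, one either chooses a reduced expression for the minimal-length coset representative and runs the same Bott--Samelson argument, or factors $X_w \to X_{w,\tJ}$ through partial flag bundles of Levi type, for which $\la$ being trivial on $\{\alpha_j^{\vee}\}_{j \in \tJ}$ makes the relative cohomology computation collapse to degree zero, giving the stated result.
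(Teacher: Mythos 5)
The paper offers no proof of this statement at all: it is quoted as Mathieu's Th\'eor\`eme 3 (cf.\ Kumar, Theorem 8.2.2) and closed with a $\Box$. Your sketch is, in substance, the standard argument that those references actually carry out --- Bott--Samelson--Demazure--Hansen resolution, Frobenius splitting of $Z(\bi)$ compatible with the boundary divisors to get rationality of the resolution and vanishing, then induction along the tower of $\P^1$-fibrations. So you are not taking a different route from the paper; you are reconstructing the proof of the cited black box.

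Two points in your reconstruction deserve more care. First, the final identification is too glib: equality of characters via ``the Demazure character formula'' does not by itself produce the $U(\gn)$-module isomorphism $H^0(X_w,\cO_{X_w}(\la))\cong L_w(\la)^{\vee}$, and, more seriously, in positive characteristic the Demazure character formula for $L_w(\la)=U(\gn)v_{w\la}$ is not available as an independent input --- it is \emph{proved} by Mathieu in the same induction as the cohomology vanishing. The non-circular order of argument is: construct the natural restriction/evaluation map between $H^0(X_w,\cO_{X_w}(\la))^{\vee}$ and $L_w(\la)$, show surjectivity of restriction from $X_w$ to $X_{s_iw'}$ at each inductive step using the splitting, and deduce the character formula and the module identification simultaneously; one also needs normality of $X_w$ (itself part of this circle of results) to get $\pi_{\bi,*}\cO_{Z(\bi)}=\cO_{X_w}$. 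Second, a minor slip: with the convention $Z(\bi)=B_{i_1}\times^B\cdots\times^B B_{i_\ell}/B$ used here, the $\P^1$-fibration in the tower is the map forgetting the \emph{last} factor, $Z(\bi)\to Z(i_1,\ldots,i_{\ell-1})$, with fiber $B_{i_\ell}/B$ on which the pullback of $\cO(\la)$ has degree $\left<\al_{i_\ell}^{\vee},\la\right>\ge 0$; the map you write, onto $Z(i_2,\ldots,i_\ell)$, is not a morphism in this convention. After the first pushforward one is dealing with vector bundles, not line bundles, so ``the relative line bundle restricts to $\cO(n)$'' only describes the top step. None of this changes the overall strategy, which is correct and is exactly what the cited sources do.
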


\begin{cor}\label{thin-mproj}
For each $w \in W$, we have $X_w = \mathrm{Proj} \, R _w$. The analogous assertion holds for generalized thin Schubert varieties corresponding to $\tJ \subset \tI$ by setting
$$R _{w, \tJ} := \bigoplus_{\la \in P_+^{\tJ}} L_w ( \la )^*.$$
\end{cor}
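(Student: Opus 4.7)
The plan is to recognize both $X_w$ and $\mathrm{Proj}\, R_w$ as the same closed subscheme of the multi-projective space $Y := \prod_{i \in \tI} \P ( L_w ( \varpi_i )^* )$, by verifying that their multi-homogeneous defining ideals agree inside the Cox ring
$$S := \bigoplus_{(n_i) \in \Z_{\ge 0}^{\tI}} \bigotimes_{i \in \tI} \mathrm{Sym}^{n_i} \bigl( L_w ( \varpi_i )^\vee \bigr).$$

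First I check that $R_w$ satisfies the hypotheses of Definition \ref{mproj}: $R_{w,0} = L_w ( 0 )^\vee = \Bbbk$, and iterated application of Corollary \ref{tmult} together with the fact that $R$ is generated by $\bigoplus_{i \in \tI} L ( \varpi_i )^\vee$ (Lemma \ref{incl}) and surjects onto $R_w$ shows that $R_w$ is generated over $\Bbbk$ by $\bigoplus_{i \in \tI} L_w ( \varpi_i )^\vee$. Hence $\mathrm{Proj}\, R_w$ sits inside $Y$ as a closed subscheme whose multi-homogeneous ideal is $\ker \mu$, where $\mu: S \twoheadrightarrow R_w$ is the surjection obtained by iterating the ring multiplication $m'_{\la, \mu}$. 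On the other hand, the line bundles $\cO_{X_w} ( \varpi_i )$ are globally generated with sections $L_w ( \varpi_i )^\vee$ by Theorem \ref{thin-coh}, so they define a morphism $X_w \to Y$; composing with the Segre--Veronese map $Y \to \P ( L_w ( \la )^* )$ for $\la := \sum_i \varpi_i \in P_{++}$ recovers the very ample embedding that defines $X_w$ per \cite[Theorem 7.1.15]{Kum02}, so $X_w \hookrightarrow Y$ is itself a closed immersion. Its multi-homogeneous ideal in $S$ is the kernel of the restriction-of-sections map $\rho: S \to \bigoplus_{(n_i)} H^0 ( X_w, \cO_{X_w} ( \sum n_i \varpi_i ) )$, and by Theorem \ref{thin-coh} this target is again $R_w$.

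The crux is therefore to identify $\mu = \rho$ as surjections $S \twoheadrightarrow R_w$. Both maps are $G^+$-equivariant, and in each multidegree the target $L_w ( \sum n_i \varpi_i )^\vee$ is cyclic under $U ( \gn^- ) \rtimes H$ (generated by the dual of the extremal weight vector $v_{w ( \sum n_i \varpi_i )}$), so any such surjection is unique up to a scalar per multidegree; these scalars match because the image of $\bigotimes_i ( v_{w \varpi_i}^* )^{\otimes n_i}$ under both $\mu$ and $\rho$ computes (up to the common normalization fixed once for the family $\{m'_{\la, \mu}\}$) the dual extremal weight vector of $L_w ( \sum n_i \varpi_i )$. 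Consequently $\ker \mu = \ker \rho$, and $X_w = \mathrm{Proj}\, R_w$ as closed subschemes of $Y$. The generalized Schubert variety case is identical after replacing $\tI$ by $\tI \setminus \tJ$ and $P_+$ by $P_+^{\tJ}$, and invoking the corresponding parts of Theorem \ref{thin-coh} and Corollary \ref{tmult}. The only delicate point I foresee is precisely the identification $\mu = \rho$ in the last step, which rests on the cyclicity of each graded piece of the target and on tracking a single compatible normalization of highest-weight vectors.
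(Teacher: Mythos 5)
Your argument is correct and is essentially the paper's own (very terse) proof with the details filled in: the paper likewise just combines Theorem \ref{thin-coh} with the fact that $X_{w,\tJ}\hookrightarrow\P(L_w(\la))$ is a closed immersion for $\la$ regular relative to $\tJ$, and your identification of the two surjections $S\twoheadrightarrow R_w$ via extremal-weight considerations is the standard way to make that combination precise. Two harmless slips to fix: the maps $\mu,\rho$ are only $\hB^+$- (i.e.\ $U(\gb)$-)equivariant, not $G^+$-equivariant, since $L_w(\la)$ is not $G^+$-stable; and the thin Demazure module $L_w(\nu)$ is cyclic under $U(\gn)$ (so its dual is cocyclic), not under $U(\gn^-)\rtimes H$ --- the uniqueness-up-to-scalar argument should be run by dualizing to $L_w(\nu)\to S_{(n_i)}^{\vee}$ and using that the $w\nu$-weight space of $S_{(n_i)}^{\vee}$ is one-dimensional.
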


\begin{proof}
Combine Theorem \ref{thin-coh} and the fact that $X_{w, \tJ} \subset \P ( L _w ( \la ) )$ is a closed immersion for each $\la \in P^{\tJ}_+$ so that $\left< \al_i^{\vee}, \la \right> > 0$ for each $i \in \tI \setminus \tJ$.
\end{proof}

Thanks to Corollary \ref{thin-mproj}, we have an embedding $X_w \subset \bX$ for each $w \in W$. This particularly implies $ \bigcup _w X_w = X \subset \bX$.

\begin{lem}\label{fixed}
The set of $H$-fixed points of $\bX$ is in bijection with $W$. 
\end{lem}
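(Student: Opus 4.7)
For each $w \in W$, I define $\xi_w \in \bX$ as the closed point given by the $H$-equivariant graded algebra surjection $\phi_w : R \to \bigoplus_{\la \in P_+} \Bbbk \cdot e_{-w\la}$, $f \mapsto f ( v_{w\la} )$ for $f \in L ( \la )^\vee = R_\la$. Multiplicativity of $\phi_w$ follows from $v_{w(\la+\mu)} \propto v_{w\la} \otimes v_{w\mu}$ inside $L ( \la + \mu ) \hookrightarrow L ( \la ) \otimes L ( \mu )$, as both are nonzero elements of the one-dimensional extremal weight space at $w ( \la + \mu )$; non-vanishing on $R_{\varpi_i}$ is automatic. Injectivity of $w \mapsto \xi_w$ is immediate since, for $\la \in P_{++}$ strictly dominant, the weights $\{ - w \la \}_{w \in W}$ are pairwise distinct.

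For surjectivity, I use a big-cell description. Via the embedding $R \hookrightarrow \Bbbk [ \hB^- ] \cong \Bbbk [ \hN^- ] \otimes \Bbbk [ X^* ]$ from Lemma \ref{incl}, the element $v_{e\varpi_i}^\vee$ corresponds to $1 \otimes e_{-\varpi_i}$, so inverting all the $v_{e\varpi_i}^\vee$'s and taking $P$-degree zero recovers $\Bbbk[\hN^-]$. This yields an $H$-equivariant isomorphism
$$\bO^e := \{ x \in \bX : v_{e\varpi_i}^\vee ( x ) \neq 0 \ \forall i \in \tI \} \xrightarrow{\sim} \hN^-,$$
with $H$ acting on $\hN^-$ by conjugation. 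Since $\gn^-$ has only strictly negative $H$-weights, the unique $H$-fixed point of $\hN^-$ is the identity, so $( \bO^e )^H = \{ \xi_e \}$. Translating by $\dot w \in N ( H )$ yields open subschemes $\bO^w := \dot w \bO^e \cong \hN^-$ with $( \bO^w )^H = \{ \xi_w \}$, and explicitly $\bO^w = \{ x \in \bX : v_{w\varpi_i}^\vee ( x ) \neq 0 \ \forall i \}$.

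The remaining task is to show every $x \in \bX^H$ lies in some $\bO^w$. Such an $x$ corresponds, via $H$-equivariance of $\Proj R$, to a compatible family $( u_\la )_{\la \in P_+}$ of weight vectors $u_\la \in L ( \la )_{\nu_\la}$ satisfying $u_\la \otimes u_\mu \propto u_{\la+\mu}$ inside $L ( \la + \mu ) \subset L ( \la ) \otimes L ( \mu )$, and the condition $x \in \bO^w$ unravels to $\nu_{\varpi_i} = w \varpi_i$ for all $i$. The main obstacle is producing such a common $w$; I argue it as follows. I claim $\nu_\rho \in W \rho$ for $\rho := \sum_i \varpi_i$: indeed, the pure-tensor-power compatibility $u_\rho^{\otimes n} \in L ( n \rho ) \subset L ( \rho )^{\otimes n}$ for all $n \ge 1$ is very restrictive and forces $\nu_\rho$ to be a vertex of the Weyl polytope $\mathrm{conv} ( W \rho )$, i.e., an extremal weight. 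Translating $x$ by $\dot w^{-1}$ where $w^{-1} \nu_\rho = \rho$ reduces to the case $\nu_\rho = \rho$; the additivity $\rho = \sum_i \nu_{\varpi_i}$ paired with $\nu_{\varpi_i} \in \varpi_i - \Z_{\ge 0} \{ \al_j \}$ (as weights of $L ( \varpi_i )$) then forces $\nu_{\varpi_i} = \varpi_i$ for all $i$, so $u_{\varpi_i} \propto v_{\varpi_i}$ and $x = \xi_e$ after translation; untranslating gives $x = \xi_w$.
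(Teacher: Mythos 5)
Your construction of the candidate fixed points $\xi_w$, the injectivity argument, and the identification $(\bO^w)^H = \{\xi_w\}$ via $\bO^w \cong \hN^-$ are all fine (the last is essentially Proposition \ref{std}, which the paper proves independently of this lemma, so there is no circularity). The problem is the step you flag as "the main obstacle": the assertion that the compatibility $u_\rho^{\otimes n} \in L(n\rho) \subset L(\rho)^{\otimes n}$ for all $n$ "is very restrictive and forces $\nu_\rho$ to be a vertex of the Weyl polytope." This is not an argument; it is a restatement of the entire nontrivial content of the lemma. Note that the weaker consequence you could extract for free --- that $n\nu_\rho$ is a weight of $L(n\rho)$ for every $n$ --- does \emph{not} imply extremality (e.g.\ $\nu = 0$ for the adjoint representation of $\mathfrak{sl}_3$ satisfies it). The extremality must come from the pure-tensor condition itself, i.e.\ from the statement that a non-extremal weight vector violates the Pl\"ucker-type relations cutting out $L(2\rho)$ inside $L(\rho)^{\otimes 2}$. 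That is a theorem of Kostant type, and in the present generality (arbitrary Kac--Moody, positive characteristic, $L(\la)$ defined as a quotient of the Verma module and not necessarily irreducible) it requires proof --- indeed it is close in spirit to the paper's main Theorem \ref{id}. As written, your proof assumes what it needs to prove at exactly the crucial point.

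The paper sidesteps this by a different reduction, which you could adopt to close the gap: since an $H$-fixed point is given by genuine weight vectors $u_{\varpi_i} \in L(\varpi_i)$ (finite-dimensional weight spaces, not just elements of the completion), and $L(\varpi_i) = \bigcup_{w} L_w(\varpi_i)$, the corresponding algebra map $R \to \bigoplus_\la \Bbbk$ factors through the quotient $R \to R_w$ for $w$ large enough; hence $x \in \Proj R_w = X_w$ by Corollary \ref{thin-mproj}, so $\bX^H = \bigcup_w X_w^H$, and the $H$-fixed points of the thin Schubert varieties are already classified (they are exactly the extremal-weight points, \cite[\S 7.1]{Kum02}). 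Alternatively, supply an actual $\mathfrak{sl}(2,i)$-string computation showing that $u \otimes u \notin L(2\rho)$ whenever $u$ is a weight vector of non-extremal weight; but that computation is not present in your proposal.
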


\begin{proof}
A $H$-fixed point $x$ of $\bX$ gives a collection of non-zero $H$-eigenvectors $\{ v'_\la \}_{\la \in P_+} \in \prod_{\la \in P_+} L ( \la )$ so that $m_{\la,\mu} ^{\vee} ( v'_{\la + \mu} ) = v'_\la \otimes v'_\mu$ for $\la, \mu \in P_+$ by Lemma \ref{incl}. By Theorem \ref{thin-coh}, there exists $w \in W$ so that $x \in X_w$. It follows that
$$\bigcup _{w \in W} X_w ^H = \bX^H.$$
The set of $H$-fixed points of $X_w ^H$ is in common among all characteristic and is a subset of the translation of $\{ [v_{\la}] \}_{\la \in P_+}$ by $N ( H )$ that descends to $W$ (see \cite[\S 7.1]{Kum02}). Therefore, we conclude that $\bX ^H$ is in bijection with $W$.
\end{proof}

Let $x_w$ denote the $H$-fixed point of $X_w^H \subset \bX$ corresponding to the cyclic $H$-eigenvectors of $\{L_w ( \la )\}_{\la \in P_+}$ for each $w \in W$. By examining the stabilizer, we deduce an isomorphism
$$\hB^- x_w = \hN^- x_w \cong \A^{\infty} \hskip 3mm \text{for each} \hskip 2mm w \in W$$
inside $\prod_{\la \in P_+} \P ( L ( \la )^{\wedge} ) = \prod_{\la \in P_+} \P ( L ( \la )^{\vee,*} )$. We set $\bO^w := \hB^- x_w$ ($= \hN^- x_w$). It is easy to see that $\bO^e$ here is isomorphic to $\bO^e$ employed in the definition of $\bX'$ as a $\hB^-$-homogeneous space.

We denote $\hN^+ x_w = N^+ x_w \subset \bX$ by $\bO_w$.

\begin{prop}\label{std}
We have an inclusion $\bO^{e} \subset \bX$ obtained by inverting finitely many rational functions on $\bX$. In other words, $\bO^e$ is a standard open set of $\bX$ in the terminology of \rm{\cite{EGAI}}.
\end{prop}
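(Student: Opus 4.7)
The plan is to realize $\bO^e$ as the non-vanishing locus in $\bX=\Proj\,R$ of a single homogeneous element. For each $i\in\tI$ let $f_i\in L(\varpi_i)^\vee=R_{\varpi_i}$ be the covector dual to the highest weight vector $v_{\varpi_i}$, and set $f:=f_1f_2\cdots f_r\in R_\rho$ with $\rho:=\sum_i\varpi_i$, the product being taken via Lemma~\ref{incl}. Under the embedding $R\hookrightarrow\Bbbk[\hB^-]$ of (\ref{inclmod}), $f_i$ corresponds to $e^{-\varpi_i}$ and $f$ to $e^{-\rho}$. I claim $\bO^e$ equals the affine open $\{f\neq 0\}\subset\bX$, which is a standard open in the sense of \cite{EGAI}.

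The inclusion $\bO^e\subseteq\{f\neq 0\}$ is immediate: the dual action of $\hN^-$ on $L(\varpi_i)^\vee$ fixes $f_i$ (since $f_i$ annihilates every weight space strictly below $\varpi_i$ while $\hN^-$ lowers weights), so $\{f\neq 0\}$ is $\hN^-$-stable; as $f(x_e)=\prod_i f_i(v_{\varpi_i})=1$, the whole orbit $\bO^e=\hN^-\cdot x_e$ lies in $\{f\neq 0\}$.

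For the reverse inclusion I compute the coordinate ring $R[f^{-1}]_0=\varinjlim_n R_{n\rho}\cdot f^{-n}$ of the affine open $\{f\neq 0\}$. Under (\ref{inclmod}) and the identification $\Bbbk[\hB^-]_0=\Bbbk[\hN^-]$ (left $H$-translation invariants), multiplication by $e^{n\rho}$ embeds each $L(n\rho)^\vee$ into $\Bbbk[\hN^-]$, so $R[f^{-1}]_0$ corresponds to the increasing union $\bigcup_n L(n\rho)^\vee\cdot e^{n\rho}\subseteq\Bbbk[\hN^-]$. The crux is to see that this union exhausts $\Bbbk[\hN^-]$: fix $\nu=\sum_i m_i\alpha_i\in Q^+$; Theorem~\ref{pairing} identifies $\Bbbk[\hN^-]_\nu$ with $M(n\rho)_{n\rho-\nu}^*$ and exhibits $L(n\rho)^\vee_{-n\rho+\nu}\cdot e^{n\rho}$ therein as $L(n\rho)_{n\rho-\nu}^*$. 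By Lemma~\ref{maxint} the kernel of $M(n\rho)\twoheadrightarrow L(n\rho)$ is the $U(\gn^-)$-submodule generated by the singular vectors $\{F_i^{(n+1)}v_{n\rho}\}_{i\in\tI}$ (using $\langle\alpha_i^\vee,n\rho\rangle=n$), hence its weight-$(n\rho-\nu)$ component vanishes whenever $\nu-(n+1)\alpha_i\notin Q^+$ for every $i$, i.e., $n\geq\max_i m_i$; for such $n$ the inclusion $L(n\rho)^\vee_{-n\rho+\nu}\cdot e^{n\rho}\subseteq\Bbbk[\hN^-]_\nu$ becomes equality. Passing to the union yields $R[f^{-1}]_0=\Bbbk[\hN^-]$.

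Consequently $\{f\neq 0\}\cong\Spec\,\Bbbk[\hN^-]=\hN^-$ as a scheme, and under this identification $x_e$ corresponds to the identity of $\hN^-$ while the natural $\hN^-$-action on $\{f\neq 0\}$ is left translation; hence $\{f\neq 0\}=\hN^-\cdot x_e=\bO^e$. The main obstacle is the weight-by-weight control over the kernel of $M(n\rho)\twoheadrightarrow L(n\rho)$ needed to make the increasing union exhaustive; the argument relies crucially on the explicit singular vector generators from Lemma~\ref{maxint} and the positivity estimate $\nu-(n+1)\alpha_i\notin Q^+$, while the remaining steps are formal manipulations between $\Proj\,R$ and the orbit description of $\bO^e$.
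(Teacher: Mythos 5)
Your proof is correct and takes essentially the same route as the paper: both invert the highest-weight covectors $v_{\varpi_i}^*$ (your $f_i$) and identify the degree-zero part of the resulting localization with $\Bbbk[\hN^-] \cong U(\gn^-)^{\vee}$ via (\ref{inclmod}), concluding that the standard open $\{f \neq 0\}$ is exactly $\bO^e$. The only difference is that the paper simply asserts the isomorphism $\sum_{\la}(v_{\la}^*)^{-1}L(\la)^{\vee} \cong U(\gn^-)^{\vee}$, whereas you verify the exhaustion weight-by-weight from the singular-vector description of the kernel of $M(n\rho) \twoheadrightarrow L(n\rho)$ — a useful amplification rather than a genuinely different argument.
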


\begin{proof}
By (\ref{inclmod}), inverting the unique $H$-weight $- \la$ vector $v_{\la}^* \in L ( \la )^{\vee}$ (up to scalar) yields
$$\sum _{\la \in P_+} ( v_{\la}^* ) ^{-1} L ( \la )^{\vee} \cong U ( \gn^- )^{\vee} \cong \Bbbk [\widehat{N}^-]$$
as algebras, where the second isomorphism is through the Hopf algebra structure of $U ( \gn^- )$. We can rearrange $\{v_{\la}^*\}_{\la \in P_+}$ so that it is closed under the multiplication. It follows that
$$\bO^e = \bX \backslash \{ v_{\varpi_i}^* = 0 \}_{i \in \tI}$$
as required.
\end{proof}

Proposition \ref{std} asserts that we have an inclusion $\mathbb O^e \subset \bX$ with a $\hB^-$-action extending the $N^-$-action on $\bX$. By using the $\mathop{SL} ( 2, i )$-actions for every $i \in \tI$, we deduce an action of $\hB^-$ (and hence the $G^-$-action) on $\bX$ extending the $N^-$-action. We set $\bX^w := \overline{\bO^w} \subset \bX$ and call it the thick Schubert variety corresponding to $w \in W$.

\begin{lem}\label{Z-dense}
The ind-scheme $X$ is Zariski dense in $\bX$.
\end{lem}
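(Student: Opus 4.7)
The plan is to reduce Zariski density of $X$ in $\bX$ to density of the honest subgroup orbit $N^- x_e$ in the standard affine open $\bO^e \subset \bX$. First, since $R = \bigoplus_{\la \in P_+} L(\la)^{\vee}$ is an integral domain by Lemma \ref{incl}, the scheme $\bX = \Proj R$ is integral, hence irreducible. Consequently the open subscheme $\bO^e$ from Proposition \ref{std} is Zariski dense in $\bX$, and it suffices to show that $X \cap \bO^e$ is Zariski dense in $\bO^e \cong \hN^-$.

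Next, I would establish that the inclusion $N^- \hookrightarrow \hN^-$ is schematically dense. Writing $\hN^- = \varprojlim_n N^-(n)$, the coordinate ring is $\Bbbk[\hN^-] = \varinjlim_n \Bbbk[N^-(n)]$, so each $f \in \Bbbk[\hN^-]$ arises as the pullback of some $f_n \in \Bbbk[N^-(n)]$. Because both $N^-$ and $N^-(n)$ are generated by the same one-parameter subgroups $\{\rho_{-\al_i}\}_{i \in \tI}$, the composition $N^- \to \hN^- \twoheadrightarrow N^-(n)$ is surjective on $\Bbbk$-points; as $N^-(n)$ is a smooth connected variety over the algebraically closed field $\Bbbk$, any $f_n$ vanishing on the image of $N^-$ must vanish identically.

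Finally, I would verify $N^- x_e \subset X$. Using a lift $\tilde{s}_i \in N(H)$ of the simple reflection $s_i$, the $\mathop{SL}(2,i)$-identity $\rho_{-\al_i}(z) = \tilde{s}_i \rho_{\al_i}(-z) \tilde{s}_i^{-1}$ expresses each $\rho_{-\al_i}(\Ga)$ as an element of the amalgam $G^+$ of $\hB^+$ and $N(H)$; hence $N^- \subset G^+$, and consequently $N^- x_e \subset G^+ / \hB^+ = X$. Since the stabilizer of $x_e$ in $\hN^-$ is trivial (as recorded in the text via $\hN^- x_e \cong \mathbb{A}^{\infty}$), the identification $\bO^e \cong \hN^-$ transports $N^- x_e$ to the schematically dense $N^-$, whence $X \cap \bO^e$ is dense in $\bO^e$ and therefore $X$ is dense in $\bX$. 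The only delicate step is this schematic density at the pro-finite-dimensional level, but it is handled cleanly by the smooth finite-dimensional quotients $N^-(n)$, so I do not anticipate any serious obstacle.
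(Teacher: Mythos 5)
Your overall route --- reduce to the big cell $\bO^e \cong \hN^-$ via irreducibility of $\bX = \Proj R$, and then show that the minimal group $N^-$ moves $x_e$ densely inside that cell --- is genuinely different from the paper's argument, which stays entirely on the level of the homogeneous coordinate ring: a homogeneous element $f \in L(\la)^{\vee}$ vanishing on every $X_w$ restricts to zero on every $L_w(\la)$ (whose duals are the coordinate rings of the $X_w$ by Corollary \ref{thin-mproj}), hence is zero because $L(\la) = \bigcup_{w} L_w(\la)$; so the vanishing ideal of $\overline{X}$ in $R$ is trivial. Your steps 1 and 3 are sound: the integrality of $R$ from Lemma \ref{incl} does give irreducibility of $\bX$, and the $\mathop{SL}(2,i)$-computation does place each $\rho_{-\al_i}(\Ga)$ inside $G^+$, so $N^- x_e \subset X$, and the free $\hN^-$-orbit structure of $\bO^e$ transports the question to density of $N^-$ in $\hN^-$.

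The gap is in step 2, exactly at the point you call ``the only delicate step.'' You assert that $N^-(n)$ is generated by the one-parameter subgroups $\rho_{-\al_i}$, but the paper defines $N^-(n)$ only as a smooth connected unipotent group with Lie algebra $\gn^-(n)$; that it is generated by the images of the simple root subgroups is precisely what has to be proved, so as written the argument is circular. In characteristic $0$ it can be repaired: the subgroup generated by the $\rho_{-\al_i}(\Ga)$ is closed and connected (Chevalley), and its Lie algebra contains the $F_i$ and hence the Lie subalgebra they generate, which is all of $\gn^-(n)$. But the main case of this paper is $\mathrm{char}\,\Bbbk = p > 0$, where $\gn^-(n)$ is the reduction of a divided-power $\Z$-form and need not be generated by the $F_i$ as a Lie algebra, and ``equality of Lie algebras implies equality of connected groups'' also fails in general. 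The repair available inside the paper is Theorem \ref{pairing}: for $f \in \Bbbk[\hN^-]$, the coefficients of $f\bigl(\rho_{-\al_{i_1}}(z_1)\cdots\rho_{-\al_{i_k}}(z_k)\bigr)$ as a polynomial in the $z_j$ are the values $(Pf)(1)$ for $P$ ranging over monomials in the $F_i^{(m)}$, and such monomials span $U(\gn^-)$ by definition of the Chevalley--Kostant form; hence $f|_{N^-} = 0$ forces $(Pf)(1) = 0$ for all $P$, and $f = 0$ by non-degeneracy of the pairing. With that substitution your proof closes in all characteristics; note that this pairing argument is essentially the module-theoretic exhaustion $L(\la) = \bigcup_w L_w(\la)$ in geometric clothing, so the two proofs ultimately rest on the same fact.
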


\begin{proof}
Since we have $L ( \la ) = \bigcup_{w \in W} L _w ( \la )$ for each $\la \in P_+$ (\cite[Lemma 8.3.3]{Kum02}), the regular functions on $\bX$ can be distinguished on $X$.
\end{proof}

\begin{thm}\label{tBW}
For each $\la \in P_+$, we have
$$H^0 ( \bX, \cO_{\bX} (\la) ) \cong L ( \la )^{\vee}.$$
\end{thm}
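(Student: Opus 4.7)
The inclusion $\iota \colon L(\la)^{\vee} = R_\la \hookrightarrow H^0(\bX, \cO_{\bX}(\la))$ is tautological from $\bX = \Proj R$. The plan is to construct an injection $\rho$ in the reverse direction satisfying $\rho \circ \iota = \mathrm{id}$, which forces both $\iota$ and $\rho$ to be isomorphisms.

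The injection $\rho$ is built in two passes. In the first pass, I use the dense affine open $\bO^e \subset \bX$ to bound weight spaces. By Proposition \ref{std}, the section $v_\la^* \in R_\la$ is nowhere vanishing on $\bO^e \cong \hN^-$ and therefore trivializes $\cO_{\bX}(\la)|_{\bO^e}$; combined with Theorem \ref{pairing}, this yields an $\hB^-$-equivariant injection
$$H^0(\bX, \cO_{\bX}(\la)) \hookrightarrow H^0(\bO^e, \cO_{\bX}(\la)|_{\bO^e}) = \Bbbk[\hN^-] \cdot v_\la^* \cong M(\la)^{\vee}.$$
Since $M(\la)^{\vee}$ is a restricted weight module with finite-dimensional weight spaces $M(\la)^{\vee}_\mu = M(\la)_\mu^*$, the same holds for $H^0(\bX, \cO_{\bX}(\la))$, which then decomposes as a direct sum of finite-dimensional $H$-weight spaces.

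In the second pass, I use thin Schubert varieties to pin down the image inside $L(\la)^{\vee}$. Each embedding $X_w \subset \bX$ (from Corollary \ref{thin-mproj}) is closed by properness of $X_w$, and Theorem \ref{thin-coh} identifies $H^0(X_w, \cO_{X_w}(\la)) = L_w(\la)^{\vee}$. By Lemma \ref{Z-dense}, the combined restriction $H^0(\bX, \cO_{\bX}(\la)) \to \prod_w L_w(\la)^{\vee}$ is injective. For each $\mu \in X^*$, choose $w$ large enough that $L_w(\la)_\mu = L(\la)_\mu$; the $\mu$-weight restriction then factors as
$$H^0(\bX, \cO_{\bX}(\la))_\mu \hookrightarrow L_w(\la)_\mu^* = L(\la)^{\vee}_\mu.$$
Summing over $\mu$ defines the required $\rho \colon H^0(\bX, \cO_{\bX}(\la)) \hookrightarrow L(\la)^{\vee}$.

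Tracing through the two passes, $\rho \circ \iota$ sends $s \in L(\la)^{\vee}$ to the collection of restrictions $(s|_{L_w(\la)})_w$, which reassembles to $s$ itself as a functional on $L(\la) = \bigcup_w L_w(\la)$. Hence $\rho \circ \iota = \mathrm{id}$, so $\rho$ is surjective and therefore an isomorphism. The main technical point I anticipate is securing the finite-dimensional $H$-weight decomposition of $H^0(\bX, \cO_{\bX}(\la))$ into a direct sum rather than a mere product; this is precisely what the passage through the restricted weight module $M(\la)^{\vee}$ via $\bO^e$ handles cleanly, and without it one could only recover an embedding into the full algebraic dual $\varprojlim_w L_w(\la)^{\vee}$, which is strictly larger than $L(\la)^{\vee}$.
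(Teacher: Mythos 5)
Your proposal is correct and follows essentially the same route as the paper: both proofs combine the injection $H^0(\bX,\cO_{\bX}(\la))\hookrightarrow H^0(\bO^e,\cO_{\bX}(\la))\cong M(\la)^{\vee}$ (forcing $H$-semisimplicity with finite-dimensional weight spaces) with the injection into $\varprojlim_w L_w(\la)^{\vee}\cong L(\la)^{*}$ coming from the Zariski density of $X$ (Lemma \ref{Z-dense}) and Theorem \ref{thin-coh}, then identify the intersection with $L(\la)^{\vee}$ and invoke the tautological inclusion $R_\la\subset H^0(\bX,\cO_{\bX}(\la))$. Your weight-by-weight stabilization $L_w(\la)_\mu=L(\la)_\mu$ is just an explicit unpacking of the paper's identification $L(\la)^{\vee}=L(\la)^{*}\cap M(\la)^{\vee}$.
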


\begin{proof}
We first prove the first assertion. By Lemma \ref{Z-dense}, we have
$$H^0 ( \bX, \cO_{\bX} (\la) ) \cong \Gamma ( X, \cO_{\bX} (\la) ).$$
This induces an injective map
$$H^0 ( \bX, \cO_{\bX} (\la) ) \subset \varprojlim_{w} H^0 ( X_w, \cO_{X_w} (\la) ).$$
By Theorem \ref{thin-coh} (or directly from \cite[Corollary 8.3.12]{Kum02}; see also the proof of Lemma \ref{B-canonical}), we have $\varprojlim_{w} H^0 ( X_w, \cO_{X_w} (\la) ) \cong L ( \la )^*$. Therefore, we conclude
$$H^0 ( \bX, \cO_{\bX} (\la) ) \subset L ( \la )^*$$
as $\g$-modules. Here we have
$$H^0 ( \bX, \cO_{\bX} (\la) ) \hookrightarrow H^0 ( \mathbb O^e, \cO_{\bX} (\la) ) \cong M ( \la )^{\vee}.$$
In particular, $H^0 ( \bX, \cO_{\bX} (\la) )$ is $H$-semisimple, and hence we deduce
$$H^0 ( \bX, \cO_{\bX} (\la) ) \subset L ( \la )^{\vee} = L ( \la )^* \cap M ( \la )^{\vee} \subset M ( \la )^*.$$
By examining the ring $R$, we deduce that $L ( \la )^{\vee} \subset H^0 ( \bX, \cO_{\bX} (\la) )$. This forces
$$H^0 ( \bX, \cO_{\bX} (\la) ) \cong L ( \la )^{\vee}$$
as required.
\end{proof}

\begin{thm}[cf. \cite{Kas90}]\label{tBWX}
For each $\la \in P_+$, we have
$$H^0 ( \bX', \cO_{\bX'} (\la) ) \cong L ( \la )^{\vee}.$$
If we assume $\mathrm{char} \, \Bbbk = 0$ in addition, then we have 
$$H^{>0} ( \bX', \cO_{\bX'} (\la) ) \cong \{ 0 \}.$$
\end{thm}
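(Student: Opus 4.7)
The plan is to derive Theorem \ref{tBWX} by identifying $\bX'$ with the scheme $\bX = \Proj R$ as a $G^-$-scheme, which reduces the first assertion to Theorem \ref{tBW}, and then to appeal to Kashiwara \cite{Kas90} for the higher-cohomology vanishing in characteristic zero.

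To set up the identification $\bX \cong \bX'$, I would exploit that both schemes share the big cell $\bO^e = \mathbb O^e \cong \hN^-$ as a $\hB^-$-stable affine open subscheme with the same scheme structure: for $\bX$ this is Proposition \ref{std}, while for $\bX'$ this is part of the data of Kashiwara's construction recalled in the Remark following the definition of $\bX'$. Moreover, both $\bX$ and $\bX'$ admit $G^-$-actions extending the $\hB^-$-action on the big cell (for $\bX$ this was obtained right after Proposition \ref{std}), and by Lemma \ref{fixed} together with the set-theoretic description of $\bX'$, both schemes have $\Bbbk$-points equal to $N(H)(\Bbbk) \cdot \mathbb O^e(\Bbbk)$. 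Consequently, the translates $\{n \cdot \mathbb O^e\}_{n \in N(H)}$ form an open affine cover of each of $\bX$ and $\bX'$, and the gluing data on overlaps is prescribed in both cases by the same $G^-$-action. This yields a $G^-$-equivariant isomorphism $\bX \xrightarrow{\sim} \bX'$ restricting to the identity on the big cell and identifying the line bundles $\cO_{\bX}(\la)$ with $\cO_{\bX'}(\la)$ (as the latter are determined by their restriction to $\mathbb O^e$ together with the $H$-equivariant structure).

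Given this identification, Theorem \ref{tBW} immediately gives
$$H^0(\bX', \cO_{\bX'}(\la)) \cong H^0(\bX, \cO_{\bX}(\la)) \cong L(\la)^{\vee},$$
settling the first assertion in arbitrary characteristic. For the vanishing of $H^{>0}(\bX', \cO_{\bX'}(\la))$ under the assumption $\mathrm{char}\,\Bbbk = 0$, I would invoke Kashiwara \cite{Kas90}, where this is established directly for $\bX'$. One could in principle attempt a self-contained proof by combining Theorem \ref{thin-coh} with the Zariski density of $X$ in $\bX$ (Lemma \ref{Z-dense}) and an inverse-limit argument: in characteristic zero, the Kempf-style vanishing on each thin Schubert $X_w$ propagates through the cover of $\bX$ by $N(H)$-translates of $\mathbb O^e$, whose intersections are suitably acyclic. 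However, doing so rigorously in the ind-pro setting demands care.

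The main obstacle is the rigorous verification that the two a priori distinct scheme structures on $N(H)(\Bbbk) \cdot \mathbb O^e(\Bbbk)$ --- one coming from Kashiwara's Grassmannian embedding (defining $\bX'$), the other from $\Proj R$ (defining $\bX$) --- agree on overlaps of the $N(H)$-charts. This ultimately reduces to showing uniqueness of the $G^-$-equivariant extension from the affine pro-scheme $\hN^-$, which in turn rests on the fact that the $\hB^-$-module structure on regular functions on $\mathbb O^e$, together with the translation action of representatives of $W$ in $N(H)$, determines the transition maps unambiguously. The secondary difficulty is the Čech-type argument needed to make the direct higher-cohomology vanishing self-contained; here falling back on \cite{Kas90} is the pragmatic choice.
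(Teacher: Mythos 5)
Your plan for the first assertion runs backwards relative to the logical structure of the paper, and the step you lean on is precisely the hard one. In the paper, Theorem \ref{tBWX} is proved \emph{before} and is an input to the identification $\bX \cong \bX'$ (Theorem \ref{id}): the embedding $\bX' \hookrightarrow \prod_{\la}\P(L(\la)^{\vee})$ of Corollary \ref{embfp}, which is what allows one to compare $\bX'$ with $\Proj R$ at all, exists only because one already knows $H^0(\bX',\cO_{\bX'}(\la)) \cong L(\la)^{\vee}$ (a priori the sections on $\bX'$ only sit inside $M(\la)^{\vee}$, so the natural target is $\P(L(\la)^{\wedge})$, not a space controlled by $L(\la)^{\vee}$). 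Your independent sketch of the identification does not close this loop: the assertion that the $N(H)$-translates of $\mathbb O^e$ form an open cover of $\bX = \Proj R$ is exactly the nontrivial content of Theorem \ref{id} and Corollary \ref{union}, and it does not follow from Lemma \ref{fixed}, which only controls the $H$-\emph{fixed} points; the paper needs a separate $\Gm$-contraction argument to rule out points of $\Proj R$ outside the union of translated big cells. The paper's actual proof of the first assertion avoids all of this and is much more direct: restriction to the big cell gives $H^0(\bX',\cO_{\bX'}(\la)) \subset M(\la)^{\vee}$, regularity of a section along $\mathop{SL}(2,i)\,\mathbb U$ for a $\hB^-$-stable open $\mathbb U$ is equivalent to $\mathop{SL}(2,i)$-finiteness, and since $G^-$ is topologically generated by the $\mathop{SL}(2,i)$, the global sections are exactly the maximal integrable submodule of $M(\la)^{\vee}$, which is $L(\la)^{\vee}$ by (the dual of) Lemma \ref{maxint}.

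For the vanishing of $H^{>0}$ in characteristic zero, citing \cite{Kas90} alone is insufficient: that reference establishes the result only for \emph{symmetrizable} $\g$, whereas the theorem is asserted for an arbitrary Kac--Moody algebra. The paper supplements \cite{Kas90} in the non-symmetrizable case by comparing the Kempf resolution of \cite{KS09} (valid for arbitrary Kac--Moody algebras after an $\mathop{SL}(2)$-reduction) with the BGG resolution of \cite{HK07}; some such additional argument is needed and is missing from your proposal.
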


\begin{proof}
Since $\bX'$ is the $G^-$-translate of $\mathbb O^e$, we have
$$H^0 ( \bX', \cO_{\bX'} (\la) ) \subset H^0 ( \mathbb O^e, \cO_{\bX'} (\la) ) \cong M ( \la )^{\vee}.$$
Let $\mathbb U \subset \bX'$ be a $\hB^-$-stable open subset. By $\mathop{SL} ( 2 )$-consideration, imposing the regularity conditions on a section of $H^0 ( \mathbb U, \cO_{\bX'} (\la) )$ along $\mathop{SL} ( 2, i ) \mathbb U$ is equivalent to impose the $\mathop{SL} ( 2, i )$-finiteness. We know that $G^-$ is topologically generated by $\mathop{SL} ( 2, i )$ for all $i \in \tI$. Therefore, the maximal integrable submodule of $M ( \la )^{\vee}$ is exactly the space of global sections of $\cO_{\bX'} (\la)$. This proves the first assertion by Lemma \ref{maxint}.

Now we assume $\mathrm{char} \, \Bbbk = 0$ to consider the latter assertion. The case of symmetrizable $\g$ is \cite[Theorem 5.2.1]{Kas90}. The Kempf resolution presented in \cite[(8.6)]{KS09} is valid for arbitrary Kac-Moody algebras, as the differential between terms can be interpreted as a $\mathop{SL} ( 2 )$-calculation if one removes unnecessary strata. We have the BGG resolution for arbitrary Kac-Moody algebras \cite[\S 3]{HK07} not by changing the construction (see e.g. \cite[\S 9.2]{Kum02}) but by proving that the resulting homology group is integrable. Therefore, their comparison yields the second assertion in general.
\end{proof}

\begin{cor}\label{embfp}
We have an embedding $\bX' \hookrightarrow \prod_{\la \in P_+} \P ( L ( \la )^{\vee} )$ of schemes.
\end{cor}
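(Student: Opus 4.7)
The plan is to construct the morphism $\Phi : \bX' \hookrightarrow \prod_{\la \in P_+} \P(L(\la)^{\vee})$ from the global sections provided by Theorem \ref{tBWX}, and to verify that it is a closed embedding by checking it first on the affine open $\mathbb{O}^e$ via Proposition \ref{std} and then propagating by $N(H)$-equivariance.

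First I would verify that each line bundle $\cO_{\bX'}(\la)$ is globally generated for $\la \in P_+$. By Theorem \ref{tBWX}, $H^0(\bX', \cO_{\bX'}(\la)) \cong L(\la)^{\vee}$. Restricted to $\mathbb{O}^e \cong \hN^-$, the inclusion $L(\la)^{\vee} \hookrightarrow M(\la)^{\vee} \cong \Bbbk[\hN^-] \otimes \Bbbk_{-\la}$ from (\ref{inclmod}) shows that the unique (up to scalar) weight $-\la$ vector $v_\la^* \in L(\la)^{\vee}$ corresponds to a unit of $\Bbbk[\hN^-]$ and so does not vanish anywhere on $\mathbb{O}^e$. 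Since $\bX'(\Bbbk) = N(H)(\Bbbk) \cdot \mathbb{O}^e(\Bbbk)$ and $L(\la)^{\vee}$ is $N(H)$-stable (inheriting the action from the $G^-$-action on $\bX'$), the $N(H)$-translates of $v_\la^*$ cover $\bX'$ by their non-vanishing loci. Hence $\cO_{\bX'}(\la)$ is globally generated, producing $\phi_\la : \bX' \to \P(L(\la)^{\vee})$, and their product defines the $G^-$-equivariant morphism $\Phi$.

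Next I would show that $\Phi|_{\mathbb{O}^e}$ is a closed immersion. By Proposition \ref{std}, the coordinate ring of $\mathbb{O}^e$ is $\Bbbk[\hN^-]$, obtained from $R = \bigoplus_{\la \in P_+} L(\la)^{\vee}$ by inverting $\{v_{\varpi_i}^*\}_{i \in \tI}$, and it is generated as a $\Bbbk$-algebra by the ratios $L(\varpi_i)^{\vee}/v_{\varpi_i}^*$ for $i \in \tI$. These ratios are precisely the pullbacks under $\Phi$ of affine coordinates on the standard open $U := \bigcap_{i \in \tI}\{v_{\varpi_i}^* \neq 0\}$ of $\prod_{i \in \tI} \P(L(\varpi_i)^{\vee})$. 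Therefore the comorphism $\Phi^*|_U$ surjects onto $\Bbbk[\hN^-]$, so $\Phi|_{\mathbb{O}^e}$ is a closed immersion into $U$. Using the $N(H)$-equivariance of $\Phi$, the same argument applied to each $N(H)$-translate of $\mathbb{O}^e$ shows that $\Phi$ is a closed immersion on each piece of an affine open cover of $\bX'$, hence a closed immersion.

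The hardest step is the surjectivity of $\Phi^*$ on the basic chart, and this is supplied exactly by Proposition \ref{std}. The fact that the target is an infinite product is essentially cosmetic: the closed-immersion check only involves finitely many factors (those indexed by fundamental weights) in any chart, and separatedness of the infinite product is inherited from its finite-factor projections, so the local closed immersions from the various $N(H)$-translates glue to a global one.
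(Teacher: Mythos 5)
Your proposal is correct and follows essentially the same route as the paper: the morphism is produced from the global sections of Theorem \ref{tBWX}, checked to be an embedding on the chart $\mathbb O^e$ (the paper invokes the transported scheme structure of $\mathbb O^e$, you invoke Proposition \ref{std}, which amounts to the same identification $\Bbbk[\mathbb O^e]\cong\Bbbk[\hN^-]$), and then globalized by $N(H)$-equivariance. You have merely expanded the paper's two-sentence argument with the implicit details (global generation, surjectivity of the comorphism on the standard chart, gluing over the translates).
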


\begin{proof}
The morphism exists by Theorem \ref{tBWX}. Since the morphism is an embedding on $\mathbb O^e$ and equivariant with respect to the $N ( H )$-action, we conclude that it is an embedding.
\end{proof}

\begin{thm}\label{id}
The scheme $\bX$ is isomorphic to the thick flag manifold $\bX'$. 
\end{thm}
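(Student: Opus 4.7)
The plan is to realize both $\bX$ and $\bX'$ as the same $N(H)$-orbit of a common big cell inside the product of projective spaces $\prod_{\la \in P_+} \P(L(\la)^{\vee})$.

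First, both $\bX$ and $\bX'$ embed $N(H)$-equivariantly into $\prod_{\la \in P_+} \P(L(\la)^{\vee})$: for $\bX'$ this is Corollary~\ref{embfp}, and for $\bX = \Proj R$ it follows from the fact that $R$ is generated in $P_+$-degrees $\{\varpi_i\}_{i \in \tI}$ with surjective multiplication maps (Lemma~\ref{incl}). Under these embeddings, the big cell $\bO^e$ is the same locally closed subscheme of the ambient product: indeed, the proof of Proposition~\ref{std} identifies $\bO^e = \bX \setminus V(\{v_{\varpi_i}^*\}_{i \in \tI}) \cong \Spec\,\Bbbk[\hN^-]$ via Theorem~\ref{pairing}, which agrees with Kashiwara's big cell in $\bX'$ by the Remark following the definition of $\bX'$.

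Next, I would argue that both $\bX$ and $\bX'$ are scheme-theoretically covered by the $W$-translates $\{w\cdot\bO^e\}_{w \in W}$ inside the ambient. For $\bX'$ this follows from its defining property $\bX'(\Bbbk) = N(H)(\Bbbk)\cdot\bO^e(\Bbbk)$ together with $N(H)/H = W$. For $\bX$, by $N(H)$-equivariance each $w\cdot\bO^e = \bX \setminus V(\{w\cdot v_{\varpi_i}^*\}_{i \in \tI})$ is a standard open affine; Lemma~\ref{fixed} identifies $\bX^H$ with $W$, and a one-parameter subgroup argument using that the weights of $L(\varpi_i)^{\wedge}$ are bounded above by $\varpi_i$ under any dominant cocharacter of $H$ shows that every closed point of $\bX$ specializes to some $H$-fixed point $x_w$, and hence lies in the open chart $w\cdot\bO^e$ corresponding to the attracting cell. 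Since on each chart $w\cdot\bO^e$ the subscheme structures induced from $\bX$ and from $\bX'$ agree (both are the $w$-translate of the common $\bO^e$ under the $N(H)$-action on the ambient), gluing over $w \in W$ yields the isomorphism $\bX \cong \bX'$.

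The main obstacle I foresee is the verification in the preceding paragraph that $\bX$ is \emph{scheme-theoretically} covered by $\{w\cdot\bO^e\}_{w \in W}$, i.e.\ that no ``stray'' closed point of $\Proj R$ lies outside every such chart. In the infinite-rank setting the Bialynicki-Birula/attractor argument needed to produce an $H$-fixed limit requires some care, since the cocharacter action on $\prod_i \P(L(\varpi_i)^{\wedge})$ is infinite-dimensional; one must invoke the Weyl-Kac character formula to ensure that weights of $L(\varpi_i)^{\wedge}$ are bounded in the chosen direction, so that the limit of a closed point of $\bX$ is well-defined and lands in the discrete set $\{x_w\}_{w \in W}$.
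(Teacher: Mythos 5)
Your proposal follows essentially the same route as the paper: identify $\bO^e$ as a common standard open cell (Proposition \ref{std} and the remark after the definition of $\bX'$), translate it by $N(H)$, and use a contracting $\Gm$-action together with Lemma \ref{fixed} and the boundedness of the $H$-weights of $L(\la)$ in $\la - \Z_{\ge 0}\Delta^+$ to show that every point of $\Proj R$ lies in one of these charts. The paper phrases this last step contrapositively --- the complement $E = \bX \setminus \bX'$ is closed, $\Gm$- and $\mathop{SL}(2,i)$-stable, and meets no $H$-fixed point, hence is empty --- but the underlying attractor argument is the same as yours.
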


\begin{proof}
We borrow some notation from the proof of Proposition \ref{std}. By Corollary \ref{embfp}, we have $\bX' := \bigcup _{w \in W} \mathbb O^w \subset \bX$. We set $E := \bX \backslash \bX'$. 

It suffices to show $E = \emptyset$. Thanks to Proposition \ref{std}, the set $E$ is contained in the locus that $v_{\la}^* = 0$ for some $\la \in P_+$. Note that $E$ admits natural $\mathop{SL} ( 2, i )$-action for each $i \in \tI$ as $R$ and $\bX'$ do. It follows that
$$E \subset \bigcap _{w \in W} \{ v_{w \la}^* = 0 \}.$$
For each $\la \in P_+$, we have a natural map
$$\psi_{\la} : \bX \rightarrow \mathbb P ( H ^0 ( \bX, \cO_{\bX} ( \la ) )^{*} ) = \mathbb P ( L ( \la ) ^{\wedge} )$$
by Theorem \ref{tBW}.

\begin{claim}
The map $\psi_{\la}$ sends $E$ to $\mathbb P ( M^{\wedge} )$, where $M \subset L ( \la )$ is a $U ( \g )$-stable $H$-submodule that does not contain $H$-weight $\{w \la \}_{w \in W}$-part for each $\la \in P_+$.
\end{claim}

\begin{proof}
Assume to the contrary to deduce contradiction. Then, we have some $x \in E$ so that $\psi_{\la} ( x ) \not\in \mathbb P ( M^{\wedge} )$ for every $U ( \g )$-stable $H$-submodule that does not contain $H$-weight $\{w \la \}_{w \in W}$-part. Then, applying $\mathop{SL} ( 2, i )$-action repeatedly, we obtain a point $y \in E$ so that $\psi_{\la} ( y ) \in \{ v_{\la}^* \neq 0 \}$. This is a contradiction and we conclude the result.
\end{proof}

We return to the proof of Theorem \ref{id}. By taking the fixed point of a $\Gm$-action that shrinks $\widehat{N}^-$, we deduce that
$$E^{H} \cap \bX^H = \emptyset.$$
This forces $E = \emptyset$ (our $\Gm$-action always send a point to a limit point as the set of $H$-weight of $L ( \la )$ in contained in $\la - \Z_{\ge 0}\Delta^+$), and we conclude the assertion.
\end{proof}

\begin{cor}[of the proof of Theorem \ref{id}]\label{union}
We have $\bX = \bigsqcup _{w \in W} \mathbb O ^w$. \hfill $\Box$
\end{cor}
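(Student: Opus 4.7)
The plan is to combine the union statement $\bX = \bigcup_{w \in W} \mathbb O^w$ (which already falls out of Theorem~\ref{id} together with the definition of $\bX'$ as the $N(H)$-saturation of $\mathbb O^e = \mathbb O^e \cdot x_e$ and the fact that $N(H)$ permutes the $H$-fixed points $\{x_w\}_{w \in W}$ transitively) with a disjointness argument borrowed from the $\mathbb G_m$-contraction used in the last step of the proof of Theorem~\ref{id}.

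For the disjointness, I would fix a cocharacter $\Gm \to H$ whose composition with every positive real root is strictly positive, so that its action on $\hN^-$ contracts $\hN^-$ to the identity. Then for each $w \in W$, every point of $\mathbb O^w = \hN^- x_w$ flows under this $\Gm$-action to the $H$-fixed point $x_w$ as $t \to 0$. Crucially, because $\mathbb O^w$ is a $\hB^-$-orbit of an $H$-fixed point and $\Gm \subset H$ normalizes $\hN^-$ (conjugating it into itself), the $\Gm$-action preserves $\mathbb O^w$, and the limit is well defined inside $\bX$ by the same argument as in Theorem~\ref{id} (the $H$-weights of $L(\la)$ lie in $\la - \Z_{\ge 0} \Delta^+$, so the limit exists in the appropriate projective factor).

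Now suppose $x \in \mathbb O^w \cap \mathbb O^{w'}$ for some $w, w' \in W$. Taking the $\Gm$-limit of $x$, the value must simultaneously equal $x_w$ (as a limit of a point in $\mathbb O^w$) and $x_{w'}$ (as a limit of a point in $\mathbb O^{w'}$). But Lemma~\ref{fixed} tells us the $H$-fixed points $\{x_w\}_{w \in W}$ of $\bX$ are all distinct (they are in bijection with $W$), forcing $w = w'$. This establishes disjointness.

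The main (and really only) obstacle is verifying that the $\Gm$-contraction is genuinely well defined on each $\mathbb O^w$ inside $\bX$ in spite of $\hN^-$ being pro-unipotent (so that $\mathbb O^w \cong \A^{\infty}$ is infinite-dimensional); but this is exactly the situation already handled at the end of the proof of Theorem~\ref{id}, so it transfers directly. Everything else is formal.
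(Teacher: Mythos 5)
Your proposal is correct and matches the paper's intent: the corollary is stated as a consequence of the proof of Theorem~\ref{id}, whose final step is exactly the $\Gm$-contraction of $\hN^-$ you invoke, so the union comes from $E=\emptyset$ and the disjointness from flowing any common point to an $H$-fixed point and applying Lemma~\ref{fixed}. (One could also note that the $\mathbb O^w$ are $\hB^-$-orbits, hence equal or disjoint, so it suffices to separate the orbits themselves --- but that reduction still ends at the same contraction argument.)
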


\begin{cor}
We have $X_w = \overline{\mathbb O _w}$, and the thin flag variety $X$ of $\g$ is obtained as $\bigcup_{w \in W} X_w$ inside $\bX$. \hfill $\Box$
\end{cor}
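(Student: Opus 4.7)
The plan is to package together three ingredients that are already in hand: the surjective multiplications of Corollary \ref{tmult}, the $\Proj$ description of thin Schubert varieties of Corollary \ref{thin-mproj}, and the identification $\bX \cong \bX'$ of Theorem \ref{id}. The maps $m'_{\la,\mu} \colon L_w(\la)^{\vee} \otimes L_w(\mu)^{\vee} \to L_w(\la+\mu)^{\vee}$ assemble into a surjection of $P_+$-graded $\Bbbk$-algebras $R \twoheadrightarrow R_w$. Applying $\Proj$ (in the sense of Definition \ref{mproj}) yields, in view of Corollary \ref{thin-mproj}, a closed immersion $X_w = \Proj R_w \hookrightarrow \Proj R = \bX$. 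This immersion is $\hB^+$-equivariant by construction and sends the cyclic $H$-eigenline in $L_w(\la)^{\vee}$ to the $H$-fixed point $x_w \in \bX$ of Lemma \ref{fixed}.

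Next I would match Schubert cells. In the thin picture (\cite[\S 7.1]{Kum02}), the orbit $N^+ [v_{w\la}]$ is open dense in $X_w$ and isomorphic to $\A^{\ell(w)}$. The closed immersion just constructed is $N^+$-equivariant and sends the base point of $X_w$ to $x_w$, so it carries this cell isomorphically onto $\bO_w = N^+ x_w \subset \bX$. Because $X_w$ is closed and irreducible in $\bX$ while $\bO_w \subset X_w$ is open and dense, taking the closure of $\bO_w$ inside $\bX$ recovers $X_w$. This proves the first equality $X_w = \overline{\bO_w}$.

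For the second assertion, Kumar's presentation (\cite[Proposition 7.1.15]{Kum02}) describes $X$ as the ind-scheme $\bigcup_{w \in W} X_w$ with transition maps the inclusions $X_w \hookrightarrow X_{w'}$ for $w \le w'$. These transitions are induced by the graded surjections $R_{w'} \twoheadrightarrow R_w$, which factor the larger surjection $R \twoheadrightarrow R_w$; hence the closed immersions $X_w \hookrightarrow \bX$ built above are mutually compatible, and their filtered union inside $\bX$ realizes $X$.

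The only nontrivial checkpoint, and thus the mild obstacle, is the coherence between the abstract closed immersion $\Proj R_w \hookrightarrow \Proj R$ and the set-theoretic inclusion sending Schubert cells to the $\bO_w$'s. This coherence is forced by the $N^+$-equivariance of the multiplication maps $m'_{\la,\mu}$ together with the rigidity of $H$-weight vectors: both $[v_{w\la}] \in X_w$ and $x_w \in \bX$ are characterized as the cyclic $H$-eigenlines for the same family of line bundles $\cO(\la)$, so the identification of base points (and hence of $N^+$-orbits) is unambiguous.
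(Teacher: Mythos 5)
Your argument is correct and is exactly the (implicit) one the paper intends: the corollary carries no proof because the closed immersion $X_w=\Proj R_w\hookrightarrow\Proj R=\bX$ was already set up right after Corollary \ref{thin-mproj} (where the paper also records $X=\bigcup_w X_w\subset\bX$), and $X_w=\overline{\bO_w}$ then follows from the $N^+$-equivariance of that immersion, the definitional identification of the base point with $x_w$, and the density of the big cell in the thin Schubert variety. Your closing remark about the coherence of base points is settled by the paper's very definition of $x_w$ as the $H$-fixed point corresponding to the cyclic $H$-eigenvectors of $\{L_w(\la)\}_{\la}$, so there is no genuine gap there.
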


\begin{thm}[Kashiwara \cite{Kas89} \S 4 and Kashiwara-Tanisaki \cite{KT95} \S 1.3]\label{closure-rel}
For each $w, v \in W$, we have:
\begin{enumerate}
\item $\bO_w \subset X_v$ if and only if $w \le v$;
\item $\bO^w \subset \bX^v$ if and only if $w \ge v$.
\end{enumerate}
Moreover, we have $\dim \, X_w = \ell ( w )$ and $\mathrm{codim} _{\bX} \, \bX^w = \ell ( w )$.
\end{thm}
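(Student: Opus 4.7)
The plan is to establish both closure relations and the dimension formulae simultaneously, by combining an $H$-weight analysis at the torus fixed points $x_w$ with the $\mathop{SL} ( 2, i )$-orbit structure along simple reflections, and then applying the cell decomposition $\bX = \bigsqcup_{u \in W} \bO^u$ of Corollary \ref{union} together with the analogous decomposition $X = \bigsqcup_{u \in W} \bO_u$ on the thin side.

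For the dimension and codimension assertions, I would compute the tangent spaces at $x_w$ via the $H$-weight decomposition. The orbits $\bO_w = N^+ x_w$ and $\bO^w = \hN^- x_w$ are homogeneous, so their stabilizers in $N^+$ and $\hN^-$ are determined root by root: for $\alpha \in \Delta^+_{\mathrm{re}}$ the subgroup $\rho_\alpha ( \Ga )$ fixes $x_w$ precisely when $w^{-1}\alpha \in \Delta^+$, and similarly for negative real roots. Combined with the standard fact $|\Delta^+_{\mathrm{re}} \cap w\Delta^-_{\mathrm{re}}| = \ell ( w )$ and the $W$-stability of imaginary roots, this yields $\dim \bO_w = \ell ( w )$; and, identifying $T_{x_w}\bX$ with $w \gn^-$ via translation of the open cell $w \bO^e$, one reads off $\mathrm{codim}_{\bX} \bO^w = \ell ( w )$ from the complementary weight set.

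The central mechanism for the closure inclusions is the following $\mathop{SL} ( 2, i )$-lemma: for a simple reflection $s_i$ and $w \in W$ with $s_i w > w$, the closed orbit $\overline{\mathop{SL} ( 2, i ) \cdot x_w} \subset \bX$ is isomorphic to $\P^1$, with $H$-fixed points exactly $x_w$ and $x_{s_i w}$. This is verified by pulling back to $\P ( L ( \la ) )$ for some $\la \in P_{++}$ with $\langle \alpha_i^\vee, w\la \rangle > 0$: the vector $F_i^{ ( \langle \alpha_i^\vee, w\la \rangle ) } v_{w\la}$ is a nonzero $H$-eigenvector of weight $s_i w \la$, and together with $v_{w\la}$ it spans an $\mathop{SL} ( 2, i )$-stable plane whose projectivization is the desired $\P^1$. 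This immediately gives the basic closure relations $\bO_w \subset X_{s_i w}$ (from $x_w \in \overline{\bO_{s_i w}}$) and $\bO^{s_i w} \subset \bX^w$ (from $x_{s_i w} \in \overline{\bO^w}$) for covers in the Bruhat order.

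Sufficiency of both inclusions then follows by iterating along Bruhat chains: given $w \le v$ (resp.\ $w \ge v$), choose a chain of covers and apply the $N^+$- (resp.\ $\hN^-$-)translates of the lemma step by step. For necessity, the cell decompositions imply that $X_v \cap \bX^H$ and $\bX^v \cap \bX^H$ are subsets of $\{x_u\}_{u \in W}$; combining this with the sufficiency half and with $x_w \in \overline{\bO_w} \cap \overline{\bO^w}$ pins down the fixed-point set of each Schubert variety, whence the converse implications. The principal obstacle is the infinite-dimensionality of $\bX$ and $\bX^w$, which rules out classical dimension-counting arguments for separating cells from their closures; the remedy is to work entirely through explicit $H$-weights on tangent spaces and through projections to the finite-dimensional projective varieties $\P ( L ( \la ) )$, where $\mathop{SL} ( 2, i )$-orbit closures are ordinary $\P^1$'s and the Bruhat arguments of \cite[Ch.~7]{Kum02} apply directly.
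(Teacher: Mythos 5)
The paper itself does not prove this theorem: it is quoted verbatim from Kashiwara and Kashiwara--Tanisaki with no proof environment, so there is no internal argument to compare against. Your proposal therefore does more than the paper, and its skeleton --- torus fixed points, $\mathop{SL} ( 2, i )$-orbit closures isomorphic to $\P^1$ joining $x_w$ to $x_{s_i w}$, iteration along simple reflections, and root-by-root stabilizer computations for the (co)dimension --- is exactly the standard argument behind the cited results. Two details need repair on the sufficiency/dimension side. First, the $\mathop{SL} ( 2, i )$-submodule of $L ( \la )$ generated by $v_{w\la}$ is not a plane but the full $(\left< \al_i^{\vee}, w\la \right> + 1)$-dimensional span of the $F_i^{(k)} v_{w\la}$; the orbit closure of $[v_{w\la}]$ is still a $\P^1$ (a rational normal curve), so your lemma's conclusion survives, but the stated justification is wrong. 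Second, Bruhat covers need not be given by simple reflections, so ``a chain of covers'' should be replaced by induction along a reduced word using the lifting property of the Bruhat order.

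More seriously, the necessity direction has a genuine gap. Knowing that $X_v \cap \bX^H$ and $\bX^v \cap \bX^H$ lie in $\{ x_u \}_{u \in W}$, combined with the sufficiency half, only shows that the fixed-point set of $\bX^v$ \emph{contains} $\{ x_u \}_{u \ge v}$; nothing you have said excludes $\bO^u \subset \bX^v$ for some $u \not\ge v$, and as you yourself note, a dimension count is unavailable in this infinite-dimensional setting. The standard fix is the extremal-weight criterion: for $\la \in P_{++}$, every point of $\bO^u$, viewed in $\P ( L ( \la )^{\wedge} )$, has a nonzero component of weight $u \la$ and all of its weights lie in $u \la - \sum_{i} \Z_{\ge 0} \al_i$; the latter is a closed condition, hence holds on all of $\bX^v = \overline{\bO^v}$ with $u$ replaced by $v$, so $\bO^u \subset \bX^v$ forces $u \la \in v \la - \sum_i \Z_{\ge 0} \al_i$, which for regular dominant $\la$ is equivalent to $u \ge v$. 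The mirror argument (weights confined to $u \la + \sum_i \Z_{\ge 0} \al_i$ on $\bO_u$) handles the ``only if'' of $\bO_w \subset X_v$. With these repairs the proposal is a complete and correct proof.
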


\section{Frobenius splitting of thick flag manifolds}

We retain the setting of the previous section. Let $B := N^+ H \subset \hB^+$. For each $i \in \tI$, we have an overgroup $B \subset B_i \subset \hB^+_i$ so that $\mathrm{Lie} \, B_i \cong \Bbbk F_i \oplus \mathrm{Lie} \, B$. We similarly define $B^- := N^- H$ and $B_i^-$ for each $i \in \tI$. Let $\bi = (i_{1},i_{2},\ldots,i_{\ell}) \in \tI^{\ell}$ be a sequence. We have a Bott-Samelson-Demazure-Hansen variety
$$Z ( \bi ) := B_{i_{1}} \times^{B} B_{i_{2}} \times^{B} \cdots \times^{B} B_{i_{\ell}} / B.$$
In case $w = s_{i_1} s_{i_2} \cdots s_{i_{\ell}}$ satisfies $\ell ( w ) = \ell$ (i.e. $\bi$ is a reduced expression of $w$), we have the BSDH resolution (see e.g. \cite[Chapter V\!I\!I\!I]{Kum02})
$$\pi_{\bi} : Z ( \bi ) \ni ( g_{1}, g_{2}, \ldots, g_{\ell} ) \mapsto g_{1} g_{2} \cdots g_{\ell} B / B \in X_{w}.$$
The variety $Z ( \bi )$ admits a left $B$-action, that makes $\pi_{\bi}$ into a $B$-equivariant morphism. For each $1 \le k \le \ell$, we define a $B$-stable divisor $H_{k} \subset Z (\bi)$ by requiring $g_{k} \in B$ for $(g_{1},g_{2},\ldots,g_{\ell}) \in Z ( \bi )$. Note that $H_k$ is naturally isomorphic to $Z (\bi^k)$, where $\bi^k \in \tI^{\ell - 1}$ is obtained from $\bi$ by omitting the $k$-th entry. In addition, every subword $\bi' = (i_{j_1}, \ldots, i_{j_\ell'}) \in \tI^{\ell'}$ of $\bi$ (so that $1 \le j_1 < j_2 < \cdots < j_{\ell'} < \ell$) gives us a $B$-equivariant embedding described as
$$Z ( \bi' ) \ni (g_1,\ldots,g_{\ell'}) \mapsto (\overbrace{1,\ldots,1}^{j_1 -1},g_{j_1}, \overbrace{1 \ldots, 1}^{j_2 - j_1 - 1}, g_{j_2}, \ldots ) \in Z ( \bi ).$$

We follow the generality on Frobenius splitting in \cite{BK05}, that considers separated schemes of finite type. We sometimes use the assertions from \cite{BK05} without finite type assumption when the assertion is independent of that, whose typical disguises are properness, finite generation, and the Serre vanishing theorem. Note that a closed subscheme of a projective space is separated.

\begin{defn}[Frobenius splitting of a ring]
Let $R$ be a commutative ring over $\Bbbk$ with characteristic $p > 0$, and let $R^{(1)}$ denote the set $R$ equipped with the map
$$R \times R^{(1)} \ni (r,m) \mapsto r^p m \in R^{(1)}.$$
This equips $R^{(1)}$ an $R$-module structure over $\Bbbk$ (the $\Bbbk$-vector space structure on $R^{(1)}$ is also twisted by the $p$-th power operation), together with an inclusion $\imath : R . 1 \subset R^{(1)}$. An $R$-module map $\phi : R^{(1)} \to R$ is said to be a Frobenius splitting if $\phi \circ \imath$ is an identity.
\end{defn}

\begin{defn}[Frobenius splitting of a scheme]
Let $\mathfrak X$ be a separated scheme defined over a field $\Bbbk$ with positive characteristic. Let $\Fr$ be the (relative) Frobenius endomorphism of $\mathfrak X$ (that induces a $\Bbbk$-linear endomorphism). We have a natural inclusion $\imath : \cO_{\mathfrak X} \rightarrow \Fr_{*} \cO_{\mathfrak X}$. A Frobenius splitting of $\mathfrak X$ is a $\cO_{\mathfrak X}$-linear morphism $\phi : \Fr_{*} \cO_{\mathfrak X} \rightarrow \cO_{\mathfrak X}$ so that the composition $\phi \circ \imath$ is the identity.
\end{defn}

\begin{defn}[Compatible splitting]
Let $\gY \subset \gX$ be an inclusion of separated schemes defined over $\Bbbk$. A Frobenius splitting $\phi$ of $\gX$ is said to be compatible with $\gY$ if $\phi (\mathsf{Fr}_* \mathcal I _{\gY} ) \subset \mathcal I_{\gY}$.
\end{defn}

\begin{rem}
A Frobenius splitting of $\gX$ compatible with $\gY$ induces a Frobenius splitting of $\gY$ (see e.g. \cite[Remark 1.1.4 (ii)]{BK05}).
\end{rem}

\begin{thm}[\cite{BK05} Lemma 1.1.11 and Exercise 1.1.E]\label{F-rel}
Let $\mathfrak X$ be a separated scheme of finite type over $\Bbbk$ with semiample line bundles $\mathcal L_1,\ldots, \mathcal L_r$. If $\mathfrak X$ admits a Frobenius splitting, then the multi-section ring
$$\bigoplus_{n_1,\ldots,n_r \ge 0} \Gamma ( \mathfrak X, \mathcal L_1 ^{\otimes n_1} \otimes \cdots \otimes \mathcal L_r ^{\otimes n_r} )$$
admits a Frobenius splitting $\phi$. Moreover, a closed subscheme $\gY \subset \gX = \mathrm{Proj} \, S$ admits a compatible Frobenius splitting if and only if the homogeneous ideal $I _{\gY} \subset S$ that defines $\gY$ satisfies $\phi ( I_{\gY} ) \subset I_{\gY}$. \hfill $\Box$
\end{thm}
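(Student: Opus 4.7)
The plan is to build the splitting on the multi-section ring $S$ from the splitting on $\mathfrak X$ by twisting with line bundles and applying the projection formula, and then to transport the compatibility statement through the $\Proj$--section-ring dictionary. For any line bundle $\mathcal L$ on $\mathfrak X$, the projection formula turns the given $\phi_{\mathfrak X} : \Fr_* \cO_{\mathfrak X} \to \cO_{\mathfrak X}$ into a morphism
\[
\phi_{\mathcal L} : \Fr_* \mathcal L^{\otimes p} \cong \mathcal L \otimes \Fr_* \cO_{\mathfrak X} \longrightarrow \mathcal L .
\]
Applying this construction with $\mathcal L = \mathcal L_1^{\otimes n_1} \otimes \cdots \otimes \mathcal L_r^{\otimes n_r}$ and passing to global sections produces, for every multi-index $\mathbf n \in \Z_{\ge 0}^r$, a $\Bbbk$-linear map $\Gamma ( \mathfrak X, \mathcal L_1^{\otimes p n_1} \otimes \cdots \otimes \mathcal L_r^{\otimes p n_r} ) \to \Gamma ( \mathfrak X, \mathcal L_1^{\otimes n_1} \otimes \cdots \otimes \mathcal L_r^{\otimes n_r} )$. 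Assembling these over all $\mathbf n$ yields a map $\phi : S^{(1)} \to S$. The identity $\phi_{\mathfrak X} \circ \imath = \mathrm{id}$ on the structure sheaf gives $\phi ( 1 ) = 1$, while the projection formula translates multiplicativity into $\phi ( s^p \cdot t ) = s \cdot \phi ( t )$ for homogeneous $s, t \in S$; this is precisely $S$-linearity of $\phi : S^{(1)} \to S$, and hence the splitting property.

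For the ``moreover'' part, I would invoke the standard correspondence between closed subschemes $\gY \subset \Proj S$ and saturated homogeneous ideals $I_{\gY} \subset S$, which is available because each $\mathcal L_i$ is semiample and $\gX = \Proj S$. Concretely, for $\mathbf n$ sufficiently positive, one has $(I_{\gY})_{\mathbf n} = \Gamma ( \gX, \mathcal I_{\gY} \otimes \mathcal L_1^{\otimes n_1} \otimes \cdots \otimes \mathcal L_r^{\otimes n_r} )$ and the relevant higher cohomology vanishes by Serre's theorem in the semiample multi-section setting. By definition, a Frobenius splitting of $\gX$ is compatible with $\gY$ iff $\phi_{\mathfrak X} ( \Fr_* \mathcal I_{\gY} ) \subset \mathcal I_{\gY}$; tensoring by $\mathcal L^{\otimes \mathbf n}$, passing to global sections in high degree, and using the construction of $\phi$ above translates this into $\phi ( I_{\gY} ) \subset I_{\gY}$. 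The converse direction sheafifies the graded inclusion via $\widetilde{( \cdot )}$ on $\Proj S$, recovering the sheaf-level inclusion $\phi_{\mathfrak X} ( \Fr_* \mathcal I_{\gY} ) \subset \mathcal I_{\gY}$, and thus the required compatibility.

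The main obstacle I expect is the mismatch between the Frobenius morphism, which shifts the grading by $p$, and the $\Proj$ construction, which only sees the ring up to Veronese subalgebras and saturation. One must check carefully that the graded inclusion $\phi ( I_{\gY} ) \subset I_{\gY}$ holding in all sufficiently high degrees really captures the sheaf-level condition on $\mathcal I_{\gY}$; here semiampleness of each $\mathcal L_i$ is used to allow replacement of $\mathbf n$ by any sufficiently positive multiple and to guarantee the needed Serre vanishing. Once this bookkeeping is in place, the remainder of the argument is a routine verification based on the projection formula and the identification of sheaves on $\Proj S$ with their high-degree graded sections.
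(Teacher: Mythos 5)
The paper does not actually prove this statement---it is quoted from Brion--Kumar (Lemma 1.1.11 and Exercise 1.1.E) with a $\Box$---and your argument is essentially the standard proof from that source: the projection formula $\Fr_*\mathcal L^{\otimes p\mathbf n}\cong \mathcal L^{\otimes\mathbf n}\otimes\Fr_*\cO_{\mathfrak X}$ produces the graded splitting of the multi-section ring, and the $\mathrm{Proj}$/saturated-ideal dictionary transports compatibility. Two small points to tidy: you should state explicitly that $\phi$ annihilates the graded pieces whose degree is not divisible by $p$ (otherwise the map $S^{(1)}\to S$ is not even defined on all of $S$, and this vanishing is what makes the $S$-linearity identity $\phi(s^pt)=s\phi(t)$ consistent degree-wise), and in the ``moreover'' direction Serre vanishing is not available for merely semiample $\mathcal L_i$ on $\mathfrak X$, so one should either take $I_{\gY}:=\bigoplus_{\mathbf n}\Gamma(\gX,\mathcal I_{\gY}\otimes\mathcal L_1^{\otimes n_1}\otimes\cdots\otimes\mathcal L_r^{\otimes n_r})$ as the definition of the defining ideal (making saturation automatic) or pass to the induced ample bundles on $\mathrm{Proj}\,S$ before invoking vanishing.
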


\begin{defn}[$B$-canonical splitting]
Let $\gX$ be a separated scheme equipped with a $B$-action. A Frobenius splitting $\phi$ is said to be $B$-canonical if it is $H$-fixed, and each $i \in \mathtt I$ yields
\begin{equation}
\rho_{\al_i} ( z ) \phi ( \rho_{\al_i} ( - z ) f )  = \sum_{j = 0}^{p-1} \phi_{i, j} ( f ),\label{Bcaneq}
\end{equation}
where $\phi_{i, j} \in \Hom_{\cO_{\mathfrak X}} ( \Fr_{*} \cO_{\mathfrak X}, \cO_{\mathfrak X} )$. We similarly define the notion of $B^-$-canonical splitting by using $\{ \rho_{-\al_i} \}_{i \in \tI}$ instead. The $B$-canonical splitting of a commutative ring $S$ over $\Bbbk$ is defined through its spectrum.
\end{defn}

\begin{thm}[\cite{BK05} Exercise 4.1.E.2]\label{uniq}
Assume that $\mathrm{char} \, \Bbbk > 0$. For each $\bi \in \tI^\ell$, there exists a unique $B$-canonical splitting of $Z ( \bi )$ that is compatible with the subvarieties $Z (\bi')$ obtained by subwords $\bi'$ of $\bi$. \hfill $\Box$
\end{thm}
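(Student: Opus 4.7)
My plan is to establish both existence and uniqueness by leveraging the iterated $\P^1$-bundle structure $Z(\bi) \to Z(\bi^\flat) \to \cdots \to \mathrm{pt}$, where $\bi^\flat$ drops the last entry of $\bi$, combined with the standard Frobenius splitting technology for BSDH varieties in \cite{BK05}.

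For existence, I would construct the splitting from a section of the anticanonical bundle. On $Z(\bi)$, the line bundle $\omega_{Z(\bi)}^{-1}$ admits a (canonical, up to scalar) section $\sigma$ whose divisor is $\sum_{k=1}^{\ell} (H_k + D_k)$, where the $H_k$ are the $B$-stable divisors described in the excerpt and the $D_k$ are the complementary ``opposite'' sections of each $\P^1$-fiber in the iterated bundle; such a $\sigma$ is assembled inductively from the fiberwise anticanonical section $[0]+[\infty]$ of $\omega_{\P^1}^{-1}$. Via the duality $\cHom_{\cO}(\Fr_{*}\cO_{Z(\bi)}, \cO_{Z(\bi)}) \cong \Fr_{*}\omega_{Z(\bi)}^{1-p}$, the element $\sigma^{p-1}$ then defines a Frobenius splitting $\phi$ compatible with every reduced component of its vanishing divisor --- in particular each $H_k$ --- and hence with every subword subvariety $Z(\bi')$, since those are iterated transverse intersections of the $H_k$'s and compatibility of a splitting is stable under such intersections.

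For $B$-canonicity, I would observe that $\sigma$ is an $H$-eigenvector of the anticanonical weight, so $\phi$ is $H$-fixed. Identity $(\ref{Bcaneq})$ for each $i \in \tI$ then reduces, via the contraction to $Z(\bi^\flat)$ and a trivialisation of the $\P^1$-bundle, to the analogous identity on a single $B_i$-stable $\P^1$-fibre. On $\P^1 = \mathop{SL}(2,i)/B$ this is the standard $B$-canonical splitting coming from the section $(xy)^{p-1}$ of $\omega_{\P^1}^{1-p}$, which is an explicit $\mathfrak{sl}(2)$-calculation.

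For uniqueness, I would proceed by induction on $\ell$. Given any $B$-canonical splitting $\psi$ of $Z(\bi)$ compatible with all subword subvarieties, its restriction to $H_\ell \cong Z(\bi^\ell)$ again satisfies these hypotheses and hence agrees with $\phi|_{H_\ell}$ by the induction hypothesis. Compatibility of $\psi$ with the remaining $H_k$ for $k < \ell$, each of which meets a generic fibre of $Z(\bi) \to Z(\bi^\flat)$ in a prescribed section, then pins down $\psi$ generically on every $\P^1$-fibre, after which $H$-equivariance and the $U_{\alpha_{i_\ell}}$-covariance forced by $(\ref{Bcaneq})$ propagate this equality to all of $Z(\bi)$. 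The main obstacle I anticipate is this last rigidification step: one must show, on a single $\P^1$-fibre, that the space of $B$-canonical splittings compatible with one prescribed section is at most one-dimensional and that the $B$-canonical condition normalises the remaining scalar uniquely. This is essentially the content of the uniqueness machinery developed in \cite[\S 4.1]{BK05}, and I expect it to carry over verbatim to our setting because the relevant computations take place inside a single $\mathop{SL}(2,i)$-copy rather than in the ambient Kac--Moody group.
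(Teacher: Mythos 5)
The paper does not prove this statement at all: it is imported verbatim from \cite[Exercise 4.1.E.2]{BK05} and closed with a $\Box$, so there is no internal argument to compare yours against. Your existence half is the standard construction of \cite[\S 2.2]{BK05}: split $Z(\bi)$ by $\sigma^{p-1}$ for $\sigma$ a section of $\omega_{Z(\bi)}^{-1}$ vanishing on $\sum_k H_k$ plus a residual divisor, and note that the subword subvariety attached to deleting positions $k_1<\cdots<k_m$ is exactly the scheme-theoretic intersection $\bigcap_j H_{k_j}$, so compatibility follows from compatibility with the $H_k$. That part, and the reduction of $B$-canonicity to an $\mathop{SL}(2,i)$-computation on a single fibre, is sound.

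The uniqueness half, however, does not work as written, for two concrete reasons. First, for $k<\ell$ the divisor $H_k$ is the condition $g_k\in B$, which involves only the coordinates surviving in $Z(\bi^\flat)$; hence $H_k$ is the \emph{pull-back} of a divisor of the base under $Z(\bi)\to Z(\bi^\flat)$, not a section, and it meets a generic fibre either in all of it or not at all -- so these divisors pin down nothing fibrewise. Second, even if they did, a Frobenius splitting only restricts to \emph{compatibly split} closed subschemes, and a generic $\P^1$-fibre of $Z(\bi)\to Z(\bi^\flat)$ is not one, so ``restricting $\psi$ to a generic fibre'' is not a legitimate operation. The induction intended by the exercise runs through the \emph{first} letter: from the definition $Z(\bi)=B_{i_1}\times^B Z(\bi^{(1)})$ with $\bi^{(1)}=(i_2,\ldots,i_\ell)$, and the machinery of \cite[\S 4.1]{BK05} shows that a $B$-canonical splitting of a scheme of the form $B_{i}\times^B Y$ is induced from, and uniquely determined by, a $B$-canonical splitting of the fibre $Y$ over the base point (which here is the subword subvariety $H_1\cong Z(\bi^{(1)})$). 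That is precisely the ``rigidification step'' you defer; it is not a fibrewise scalar normalisation but the associated-bundle induction of canonical splittings, and with that substitution your sketch becomes the intended solution of the exercise.
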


\begin{cor}\label{rest}
In the setting of Theorem \ref{uniq}, the restriction of the $B$-canonical splitting to $Z (\bi')$ is $B$-canonical.
\end{cor}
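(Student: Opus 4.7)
The plan is to check both conditions in the definition of a $B$-canonical splitting directly on $Z(\bi')$, deducing each from the corresponding condition on $Z(\bi)$ via the compatibility of $\phi$ with $Z(\bi')$ and the $B$-equivariance of the embedding $Z(\bi') \hookrightarrow Z(\bi)$ recorded in the excerpt preceding Theorem \ref{uniq}.

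First I would note that compatibility of $\phi$ with $Z(\bi')$ means $\phi(\Fr_{*}\mathcal I_{Z(\bi')}) \subset \mathcal I_{Z(\bi')}$, so $\phi$ descends to a Frobenius splitting $\bar\phi$ of $Z(\bi')$. Since the embedding $Z(\bi') \hookrightarrow Z(\bi)$ is $B$-equivariant, the surjection $\cO_{Z(\bi)} \twoheadrightarrow \cO_{Z(\bi')}$ intertwines both the $H$-action and the action of each one-parameter subgroup $\rho_{\al_i}(\Ga)$. Hence $H$-fixedness of $\bar\phi$ follows at once from $H$-fixedness of $\phi$ by restricting sections.

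For the polynomial identity (\ref{Bcaneq}), I would show that each auxiliary operator $\phi_{i,j}$ also preserves $\mathcal I_{Z(\bi')}$, so that it descends to an $\cO_{Z(\bi')}$-linear map $\bar\phi_{i,j} \colon \Fr_{*}\cO_{Z(\bi')} \to \cO_{Z(\bi')}$. Take $f \in \Fr_{*}\mathcal I_{Z(\bi')}$. Then $B$-stability of $Z(\bi')$ gives $\rho_{\al_i}(-z) f \in \mathcal I_{Z(\bi')}[z]$; the compatibility of $\phi$ yields $\phi(\rho_{\al_i}(-z) f) \in \mathcal I_{Z(\bi')}[z]$; and $\rho_{\al_i}(z)$ preserves $\mathcal I_{Z(\bi')}[z]$. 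Thus the left-hand side of (\ref{Bcaneq}) lies in $\mathcal I_{Z(\bi')}[z]$ as a polynomial in $z$, so comparing the coefficients on the right-hand side forces $\phi_{i,j}(f) \in \mathcal I_{Z(\bi')}$ for every $j$. Restricting (\ref{Bcaneq}) to $Z(\bi')$ now gives the identity $\rho_{\al_i}(z)\bar\phi(\rho_{\al_i}(-z)\bar f) = \sum_{j=0}^{p-1} \bar\phi_{i,j}(\bar f)$ for $\bar f \in \Fr_{*}\cO_{Z(\bi')}$, which is exactly the $B$-canonical condition for $\bar\phi$.

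This is a formal verification with no real obstacle; the mild point to watch is the descent of the $\phi_{i,j}$, which is forced by reading (\ref{Bcaneq}) as a polynomial identity in $z$ together with the $B$-stability of $Z(\bi')$. Conceptually, the statement is the general fact that a compatible restriction of a $B$-canonical splitting to a $B$-stable closed subscheme is again $B$-canonical.
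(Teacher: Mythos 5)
Your proof is correct and is exactly the argument the paper has in mind: the paper's proof consists of the single sentence that the $B$-canonical condition is preserved under restriction to a $B$-stable compatibly split subscheme, which is precisely the general fact you state and verify in detail (descent of $H$-fixedness and of the operators $\phi_{i,j}$ by reading (\ref{Bcaneq}) as a polynomial identity in $z$). No issues.
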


\begin{proof}
The condition of $B$-canonical splitting is preserved by the restriction to a $B$-stable compatibly split subset.
\end{proof}

\begin{lem}\label{Z-dense2}
For each $w \in W$, the ind-scheme $( X \cap \bX^w )$ is Zariski dense in $\bX^w$.
\end{lem}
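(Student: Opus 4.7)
The plan is to prove the stronger claim that $X \cap \bO^w$ is already Zariski dense in the open cell $\bO^w = \hN^- x_w$; since $\bO^w$ is open and dense in the irreducible scheme $\bX^w$ by Theorem \ref{closure-rel}, this implies the lemma. The candidate dense subset I exhibit is the orbit $N^- x_w$, where $N^- \subset \hN^-$ is the ind-subgroup generated by finite products of the one-parameter subgroups $\{\rho_{-\al_i}(\Ga)\}_{i \in \tI}$. Each $\rho_{-\al_i}$ lies in $\mathop{SL}(2,i) \subset G^+$ (via the amalgamation description of $G^+$), so $N^- \subset G^+$ acts on the thin flag variety $X$; since $x_w \in X_w \subset X$, we get $N^- x_w \subset X$, while clearly $N^- x_w \subset \hN^- x_w = \bO^w$.

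The crux is the Zariski density of $N^-$ in $\hN^-$. By Theorem \ref{pairing}, $\Bbbk[\hN^-] \cong U(\gn^-)^{\vee}$. For any $f \in U(\gn^-)^{\vee}$ and any sequence $(i_1,\ldots,i_k) \in \tI^k$, the pullback of $f$ along the map $\Ga^k \to \hN^-$ given by $(t_1,\ldots,t_k) \mapsto \rho_{-\al_{i_1}}(t_1)\cdots\rho_{-\al_{i_k}}(t_k)$ equals the polynomial $\sum_{n_1,\ldots,n_k \ge 0} t_1^{n_1}\cdots t_k^{n_k} \, f(F_{i_1}^{(n_1)}\cdots F_{i_k}^{(n_k)})$ in $\Bbbk[t_1,\ldots,t_k]$, via the divided-power formula $\rho_{-\al_i}(t) = \sum_n t^n F_i^{(n)}$ built into the definition of $\rho_{-\al_i}$ through its action on integrable representations. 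Since the monomials $F_{i_1}^{(n_1)}\cdots F_{i_k}^{(n_k)}$ span the Chevalley--Kostant form $U(\gn^-)$, the vanishing of all such polynomials forces $f = 0$, showing $N^-$ is Zariski dense in $\hN^-$.

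Finally, the surjective orbit map $\hN^- \twoheadrightarrow \bO^w$, $n \mapsto n x_w$, is a morphism of schemes and hence continuous, so Zariski density of $N^-$ in $\hN^-$ transports to density of $N^- x_w$ in $\bO^w$. The main point requiring care is the polynomial-coefficient identification in positive characteristic, where the divided-power (rather than naive) exponential form of $\rho_{-\al_i}(t)$ must be used; this is standard given the integrability of $\g$-modules in play, and everything else follows from elementary continuity and the spanning property of divided-power monomials in $U(\gn^-)$.
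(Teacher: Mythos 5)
Your argument is correct, but it is genuinely different from the one in the paper. The paper argues by contradiction: assuming $X \cap \bX^w$ is not dense, it repeatedly applies $\mathop{SL}(2,i_k)$ for a reduced word of $w$, using that $\mathop{SL}(2,i)\bX^v = \bX^{s_iv}$ and that translating a non-dense $B^-$-stable subset by $\mathop{SL}(2,i)$ keeps it non-dense, until it contradicts the already-established density of $X$ in $\bX$ (Lemma \ref{Z-dense}). You instead exhibit an explicit dense subset, $N^- x_w \subset X \cap \bO^w$, reducing everything to the Zariski density of $N^-$ in $\hN^-$, which you extract from Theorem \ref{pairing}: a function $f \in \Bbbk[\hN^-] \cong U(\gn^-)^{\vee}$ vanishing on $N^-$ has all coefficients $f(F_{i_1}^{(n_1)}\cdots F_{i_k}^{(n_k)})$ equal to zero, and these divided-power monomials span the Chevalley--Kostant form by its very definition, so $f=0$. (The sums you write are genuinely polynomials because $f$ is supported on finitely many $H$-weights; and whether the expansion produces the monomials in the stated or the reversed order is immaterial since either family spans.) Both arguments are sound. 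The paper's route is shorter given its prior lemmas and avoids touching the coordinate ring of $\hN^-$ again, but it only works relative to Lemma \ref{Z-dense}; your route is self-contained and constructive, identifies a concrete countable-dimensional dense ind-subscheme, and in fact reproves Lemma \ref{Z-dense} as the case $w=e$. The one point you rightly flag --- using the divided-power form $\rho_{-\al_i}(t)=\sum_n t^n F_i^{(n)}$ rather than a naive exponential in positive characteristic --- is exactly consistent with Mathieu's construction of $\hN^-$ as $\mathrm{Spec}\, U(\gn^-)^{\vee}$ that the paper adopts, so no gap arises there.
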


\begin{proof}
Assume to the contrary to deduce contradiction. Let $w = s_{i_1} s_{i_2} \cdots s_{i_{\ell}}$ be a reduced expression. For a $B^-$-stable subset $Y \subset \bX^v$ that is not Zariski dense in $\bX^v$ and $i \in \tI$ so that $s_i v < v$, the inclusion
$$\mathop{SL} ( 2, i ) Y \subset \overline{\rho_{\al_i} ( \mathbb G_a ) Y} \subset \mathop{SL} ( 2, i ) \bX^{v} = \bX^{v} \cup \bX^{s_i v} = \bX^{s_i v}$$
cannot be Zariski dense. Moreover, $\mathop{SL} ( 2, i ) Y$ is again $B^-$-stable by the Bruhat decomposition (of $\mathop{SL} ( 2, i )$). As $( X \cap \bX^w )$ is stable under the action of $B^-$, we repeatedly apply the above estimate to conclude
$$\overline{\mathop{SL} ( 2, i_{\ell} ) \cdots \mathop{SL} ( 2, i_{1} ) ( X \cap \bX ^w )} \subset \bX$$
is not Zariski dense. By the Bruhat decomposition, we have $\bX ^{s_i v} \subset \mathop{SL} ( 2, i ) \bX ^v$ for each $i \in \tI$ and $v \in W$. Each rational point $x$ of $\bX$ satisfies
$$\mathop{SL} ( 2, i_1 ) \cdots \mathop{SL} ( 2, i_{\ell} ) x \cap \bX^w \neq \emptyset$$
by its repeated application. It follows that
$$X = \mathop{SL} ( 2, i_{\ell} ) \cdots \mathop{SL} ( 2, i_1 ) ( X \cap \bX ^w ) \subset \bX$$
is also not Zariski dense. This gives a contradiction to Lemma \ref{Z-dense}, and we conclude the result.
\end{proof}

\begin{lem}\label{B-canonical}
Assume that $\mathrm{char} \, \Bbbk > 0$. For each $w \in W$, the ring $R$ and $R_w$ admits a $B$-canonical splitting.
\end{lem}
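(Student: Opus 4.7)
\textbf{Proof plan for Lemma \ref{B-canonical}.} The strategy is to first handle $R_w$ by descending the splitting of a Bott--Samelson--Demazure--Hansen variety through the BSDH resolution $\pi_{\bi}: Z(\bi) \twoheadrightarrow X_w$ and then using Theorem \ref{F-rel} to pass from $X_w$ to its section ring. Then $R$ is treated by assembling the splittings of the $R_w$ into a compatible inverse system indexed by the Bruhat order and passing to the inverse limit, exploiting that $L(\la) = \bigcup_w L_w(\la)$ implies $L(\la)^{\vee} = \varprojlim_w L_w(\la)^{\vee}$ in every $\la \in P_+$.

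For each $w$, fix a reduced expression $\bi$ of $w$ of length $\ell(w)$. Theorem \ref{uniq} provides a $B$-canonical splitting $\phi_{\bi}$ of $Z(\bi)$ that is compatible with every subword subvariety $Z(\bi')$. Since $\pi_{\bi}$ is birational and $(\pi_{\bi})_* \cO_{Z(\bi)} \cong \cO_{X_w}$ (standard, via the Kempf--Ramanathan argument), $\phi_{\bi}$ descends to a $B$-canonical splitting $\bar\phi_w$ of $X_w$: the $H$-equivariance and the identity (\ref{Bcaneq}) survive pushforward because $\pi_{\bi}$ is $B$-equivariant. Applying Theorem \ref{F-rel} to the semiample line bundles $\{\cO_{X_w}(\varpi_i)\}_{i \in \tI}$ (which, by Corollary \ref{thin-mproj}, realize $X_w = \Proj R_w$) yields a Frobenius splitting $\phi_w$ of the multi-section ring $R_w = \bigoplus_{\la \in P_+} H^0(X_w,\cO_{X_w}(\la))$. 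The $B$-canonical conditions transfer verbatim from $\bar\phi_w$ to $\phi_w$ because the $B$-action on $R_w$ is induced from that on $X_w$ on each graded piece.

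To produce the splitting of $R$, the key observation is that for $v \le w$ in the Bruhat order one may choose a reduced expression $\bi_v$ of $v$ appearing as a subword of a reduced expression $\bi_w$ of $w$ (the subword property of the Bruhat order). The compatibility of the BSDH splittings with subword subvarieties (Theorem \ref{uniq}) together with Corollary \ref{rest} shows that $\phi_{\bi_w}$ restricts to $\phi_{\bi_v}$; pushing forward and passing to sections gives that the natural surjection $R_w \twoheadrightarrow R_v$ intertwines $\phi_w$ and $\phi_v$. Thus $\{\phi_w\}_{w \in W}$ forms an inverse system, and since in each weight $\la \in P_+$ one has $L(\la)^{\vee} = \varprojlim_w L_w(\la)^{\vee}$, the graded-piecewise inverse limit $\phi := \varprojlim_w \phi_w$ is a well-defined $R$-linear map $R^{(1)} \to R$ which splits the Frobenius; the $H$-fixedness and the relation (\ref{Bcaneq}) are preserved under inverse limits, so $\phi$ is $B$-canonical.

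The main technical obstacle is the compatibility statement in the second stage: one must ensure that different choices of reduced expressions for $v$ and $w$ produce splittings that glue coherently across the entire poset $W$. The cleanest way to handle this is to invoke the uniqueness half of Theorem \ref{uniq} together with the fact that a $B$-canonical Frobenius splitting of a Schubert variety $X_w$ is unique (so $\bar\phi_w$ does not depend on the choice of $\bi$), which reduces the compatibility to a pairwise check that can be verified by embedding both reduced expressions into a common one via the subword property. Once this coherence is in hand, the passage to the inverse limit is formal because the restricted dual exchanges colimits and limits on the relevant side.
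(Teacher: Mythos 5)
Your overall strategy coincides with the paper's: split $Z(\bi)$ canonically via Theorem \ref{uniq}, transfer the splitting to the multi-section ring $R_w$ via Theorem \ref{F-rel}, and then assemble the $R_w$ into an inverse system to reach $R$. (A small stylistic difference: the paper does not descend the splitting to $X_w$ first; it applies Theorem \ref{F-rel} directly to $Z(\bi)$ with the pulled-back line bundles, using $H^0(Z(\bi),\pi_{\bi}^*\cO_X(\la)) \cong L_w(\la)^*$, which avoids your Kempf--Ramanathan descent step.) However, there are two points where your argument diverges in a way that matters.

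First, the identification $L(\la)^{\vee} = \varprojlim_w L_w(\la)^{\vee}$ is false when $L(\la)$ is infinite-dimensional: each $L_w(\la)$ is finite-dimensional, so $\varprojlim_w L_w(\la)^* = \bigl(\varinjlim_w L_w(\la)\bigr)^* = L(\la)^*$, the \emph{full} dual, which strictly contains the restricted dual $L(\la)^{\vee}$. Consequently the inverse limit of your splittings is a splitting of $\varprojlim_k R_{w_k}$, of which $R$ is only the $H$-finite part. You still need the extra step (present in the paper) that the limiting splitting preserves $H$-weight spaces and therefore restricts to the dense subring $R$. Your closing remark about the restricted dual ``exchanging colimits and limits'' gestures at this but does not supply it; as written, the map you construct is a splitting of the wrong ring.

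Second, you index the inverse system by the whole Bruhat poset $W$ and then must prove coherence across arbitrary choices of reduced expressions, which you propose to settle by invoking uniqueness of the $B$-canonical splitting of $X_w$. That uniqueness is not among the results quoted in the paper (Theorem \ref{uniq} asserts uniqueness only for $Z(\bi)$), so this is an additional input you would have to justify. The paper sidesteps the whole issue by choosing a single nested chain of words $\bi_1 \subset \bi_2 \subset \cdots$ (each obtained from the next by omitting the first letter) whose images exhaust $X$; cofinality of this chain plus Corollary \ref{rest} makes the compatibility of the splittings automatic, with no uniqueness statement needed. I recommend adopting the single-chain device and correcting the inverse-limit identification; with those two repairs your argument matches the paper's.
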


\begin{proof}
Let $\bi \in \tI^{\ell}$ be a sequence so that $\mathrm{Im} \, \pi_{\bi} = X_w$. Then, we have
$$H^0 ( Z ( \bi ), \pi_{\bi}^* \cO_{X} ( \la ) ) \cong H^0 ( X_w, \cO_X ( \la ) ) \cong L _w ( \la )^* \hskip 3mm \la \in P_+$$
by \cite[Th\'eor\`eme 3]{Mat88} (cf. \cite[Theorem 8.2.2]{Kum02}).

Applying Theorem \ref{F-rel}, we deduce that the ring $R_w$ admits a $B$-canonical splitting. Choose a series of sequences $\bi_k \in I^k$ ($k \ge 1$) so that
\begin{enumerate}
\item $\bi_{k}$ is obtained from $\bi_{k+1}$ by omitting the first entry:
\item $\bigcup_{k \ge 1} \pi_{\bi_k} ( Z ( \bi_k ) ) = X$,
\end{enumerate}
(whose existence is guaranteed by the subword property of the Bruhat order \cite[Lemma 1.3.16]{Kum02}). Let $w _k \in W$ be so that $X_{w_k} = \pi_{\bi_k} ( Z ( \bi_k ) )$ (that exists as $Z ( \bi_k )$ is irreducible). Then, we have
$$L ( \la ) = \varinjlim _k L _{w_k} ( \la ).$$
This induces a dense inclusion of algebras
$$R \subset \varprojlim _k R_{w_k},$$
where the LHS is the $H$-finite part of the RHS. The system $\{ R_{w_k} \}_{k \ge 1}$ is an inverse system with surjective transition maps. Therefore, Corollary \ref{rest} induces a Frobenius splitting of $\varprojlim _k R_{w_k}$ from the $B$-canonical splittings of $\{ R_{w_k} \}_{k \ge 1}$. Since our splitting preserves the $H$-weights, it descends to the $H$-finite part $R$ as required.
\end{proof}

\begin{cor}\label{B-can1}
Assume that $\mathrm{char} \, \Bbbk > 0$. The ring $R$ admits a $B^-$-canonical splitting, and hence $\bX$ is $B^-$-canonically Frobenius split.
\end{cor}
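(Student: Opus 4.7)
The plan is to run the argument of Lemma \ref{B-canonical} with the roles of $B$ and $B^-$ exchanged, exploiting the structural symmetry of the Frobenius-splitting machinery of \cite{BK05} under $B \leftrightarrow B^-$. In more detail, I would first invoke the $B^-$-canonical analogue of Theorem \ref{uniq}—obtained by running the proof of \cite[Exercise 4.1.E.2]{BK05} with the opposite Borel in every step—to produce a unique $B^-$-canonical Frobenius splitting on each opposite Bott-Samelson variety
$$Z^-(\bi) := B^-_{i_1} \times^{B^-} B^-_{i_2} \times^{B^-} \cdots \times^{B^-} B^-_{i_\ell} / B^-,$$
compatible with subword subvarieties. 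These resolve the opposite thin Schubert varieties $X^w := \overline{N^- x_w} \subset \bX$, so by Theorem \ref{F-rel} each graded section ring $\bigoplus_{\la \in P_+} \Gamma(X^w, \cO_{\bX}(\la))$ inherits a $B^-$-canonical splitting.

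The inverse-limit construction of Lemma \ref{B-canonical} then proceeds verbatim on the opposite side: pick an exhausting sequence of subwords $\{\bi_k\}_{k \ge 1}$ with associated $w_k \in W$, take the inverse limit of the opposite section rings with their $B^-$-canonical splittings, and pass to the $H$-finite part. That $H$-finite part is again the intrinsic ring $R$ (since $R$ depends only on $\bX$ and the line bundles $\cO_{\bX}(\varpi_i)$, not on a choice of Borel), so $R$ inherits the required $B^-$-canonical splitting. The second assertion of the corollary then follows from Theorem \ref{F-rel} applied to the presentation $\bX = \mathrm{Proj}\, R$ with the semiample line bundles $\{\cO_{\bX}(\varpi_i)\}_{i \in \tI}$.

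The main obstacle I anticipate is establishing the opposite analogue of Lemma \ref{Z-dense}: that $\bigcup_{w \in W} X^w$ is Zariski dense in $\bX$, equivalently that the $H$-finite part of the inverse system of opposite section rings reproduces $R$. The thin-side argument of Lemma \ref{Z-dense} exploited $L(\la) = \bigcup_w L_w(\la)$, but the opposite analogue needs a parallel exhaustion of each $H$-weight space of $L(\la)^\vee$ by restrictions from finite-dimensional $N^-$-orbit closures inside the thick cells $\bO^w = \hN^- x_w$. This does not follow formally from the identities quoted earlier in the paper and would require a separate verification, for instance a dimensional bookkeeping based on the Weyl-Kac character formula applied to each $H$-weight space and the $\hN^-$-orbit stratification of $\bX$.
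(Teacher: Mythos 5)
Your route does not go through as written, and it misses the one-line argument that actually settles the corollary. The central problem is the ``opposite Bott--Samelson'' setup. The base point $x_e \in \bX$ is the highest-weight line $[v_\la]$, which is stabilized by $H$ and $N^+$ but \emph{not} by $N^-$; consequently the multiplication map $Z^-(\bi) \ni (g_1,\dots,g_\ell) \mapsto g_1\cdots g_\ell\, x_e$ is not well-defined on the quotient by the right $B^-$-action, so $Z^-(\bi)$ does not map to $\bX$ the way $Z(\bi)$ maps to $X$. Moreover your targets $X^w := \overline{N^- x_w}$ are not ``opposite thin'' objects: since $N^-$ is dense in $\hN^-$, one has $\overline{N^- x_w} = \overline{\bO^w} = \bX^w$, the thick Schubert variety, which has finite \emph{codimension} and infinite dimension. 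A finite-dimensional iterated $\P^1$-bundle cannot dominate it; Theorem \ref{F-rel} (stated for finite-type schemes) does not apply to its section ring; and the identification of that section ring with $L^w(\la)^\vee$ is Theorem \ref{Gamma}, which in the paper is a \emph{consequence} of the splitting results, so you would also be arguing in a circle. Finally, the thick Schubert varieties are nested downward ($\bX^v \supset \bX^w$ for $v \le w$, with $\bX^e = \bX$), so there is no increasing exhaustion of $\bX$ by them mirroring $X = \bigcup_w X_w$; the ``density'' obstacle you flag at the end is a symptom of this structural asymmetry and is not repaired by character bookkeeping.

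The paper's proof is instead a direct corollary of Lemma \ref{B-canonical}: the ring $R = \bigoplus_{\la} L(\la)^\vee$ already carries the $B$-canonical splitting constructed there, and since each $L(\la)$ is integrable, $R$ admits a rational action of $\mathop{SL}(2,i)$ for every $i \in \tI$. By \cite[Exercise 4.1.E.1]{BK05}, on a ring (or scheme) admitting compatible actions of all these $\mathop{SL}(2,i)$'s, a $B$-canonical splitting is automatically $B^-$-canonical. No opposite Bott--Samelson construction is needed; the whole point of the notion of canonical splitting is precisely that it transfers across the $B \leftrightarrow B^-$ symmetry once the $\mathop{SL}(2,i)$-actions are present.
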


\begin{proof}
We retain the setting of the proof of Lemma \ref{B-canonical}. Our ring $R$ is a $H$-finite graded algebra that admits a $B$-canonical splitting. Note that $R$ admits a rational action of $\mathop{SL} ( 2, i )$ for each $i \in \tI$ as each $L ( \la )$ is integrable. Hence, \cite[Excercise 4.1 (1)]{BK05} forces a $B$-canonical splitting of $R$ to induce a $B^-$-canonical splitting as desired.
\end{proof}

\begin{cor}\label{B-can-}
Assume that $\mathrm{char} \, \Bbbk > 0$. For each $w \in W$, the $B^-$-canonical splitting of $\bX$ $($constructed above$)$ is compatible with $\mathbb X^w$.
\end{cor}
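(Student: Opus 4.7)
The plan is to translate the compatibility claim to the coordinate ring via Theorem~\ref{F-rel} and then lift the compatibility from the finite-level BSDH data through the inverse-limit construction of $\phi$ used in Lemma~\ref{B-canonical}.

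First, I would identify the defining homogeneous ideal of $\bX^w$ in $R$ with $I^w := \ker(R \twoheadrightarrow R^w)$, the surjection being that of Corollary~\ref{tmult}. Using $L^w(\la) = U(\gn^-) v_{w\la}$ together with the identification $\bO^w = \hN^- x_w \cong \hN^-$, a section $f \in L(\la)^{\vee}$ vanishes on $\bO^w$ (and hence on its closure $\bX^w$) exactly when $f|_{L^w(\la)} \equiv 0$; assembling over $\la \in P_+$ yields $\Proj R^w = \bX^w$ as closed subschemes of $\bX$. By Theorem~\ref{F-rel}, the assertion therefore reduces to $\phi(I^w) \subset I^w$.

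Next, I would exploit the construction of $\phi$ from Lemma~\ref{B-canonical} and Corollary~\ref{B-can1} as the $H$-finite part of $\varprojlim_k \phi_k$, where each $\phi_k$ is the $B$-canonical splitting of $R_{w_k}$ obtained by pushing forward the BSDH splitting of $Z(\bi_k)$. For each $k$ with $w \le w_k$, the opposition Richardson $X_{w_k} \cap \bX^w$ is a $B^-$-stable closed subscheme of $X_{w_k}$ whose defining homogeneous ideal $I^w_k \subset R_{w_k}$ coincides with the image of $I^w$ under $R \twoheadrightarrow R_{w_k}$, because $L_{w_k}(\la) \cap L^w(\la)$ is the finite-dimensional thick Demazure of this Richardson. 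Transplanting the Richardson compatibility of Kumar--Schwede~\cite{KS14} to the Kac--Moody setting, one establishes $\phi_k(I^w_k) \subset I^w_k$ by lifting to $Z(\bi_k)$, realizing the preimage of $X_{w_k} \cap \bX^w$ as the intersection of compatibly split sub-BSDHs (Theorem~\ref{uniq}) with $B^-$-stable divisors on $Z(\bi_k)$, and pushing down along $\pi_{\bi_k}$.

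Finally, since $I^w$ and $\phi$ are the $H$-finite parts of $\varprojlim_k I^w_k$ and $\varprojlim_k \phi_k$ respectively, the compatibilities $\phi_k(I^w_k) \subset I^w_k$ assemble in the limit into $\phi(I^w) \subset I^w$. The main obstacle lies in this finite-level step, namely adapting Kumar--Schwede's Richardson compatibility to arbitrary Kac--Moody Schubert varieties $X_{w_k}$: one must simultaneously track the $B$- and $B^-$-canonicities on $Z(\bi_k)$ and match the opposition-Bruhat structure of $\bX^w$ against the subword stratification of the BSDH.
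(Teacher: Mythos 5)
There is a genuine gap, and it sits exactly where you flag ``the main obstacle.'' Your plan is to obtain compatibility with $\bX^w$ by assembling finite-level compatibilities $\phi_k(I^w_k)\subset I^w_k$ for the Richardson varieties $X_{w_k}\cap\bX^w$, and to prove each of these by lifting to $Z(\bi_k)$ and ``realizing the preimage as the intersection of compatibly split sub-BSDHs with $B^-$-stable divisors.'' But Theorem~\ref{uniq} only gives compatibility with the subvarieties $Z(\bi')$ attached to subwords, and these project onto \emph{thin} Schubert varieties $X_v$ ($v\le w_k$), i.e.\ $B$-orbit closures; they give no handle whatsoever on the $B^-$-orbit closures $\bX^w\cap X_{w_k}$. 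The preimage $\pi_{\bi_k}^{-1}(X_{w_k}\cap\bX^w)$ is not an intersection of sub-BSDHs with anything that Theorem~\ref{uniq} splits, so the finite-level step is not a routine adaptation but is essentially the whole content of the statement (it is precisely what Kumar--Schwede prove, under symmetrizability, by a different mechanism). There is also a circularity: your identification of $I^w_k$ with the image of $I^w$, via $\Gamma(X_{w_k}\cap\bX^w,\cO(\la))^\vee=L_{w_k}(\la)\cap L^w(\la)$, presupposes restriction-surjectivity and Demazure-type descriptions of sections on Richardson varieties that in this paper are consequences of the present corollary (Theorem~\ref{rest-surj}, Theorem~\ref{Gamma}, Corollary~\ref{compat}), not inputs to it.

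The paper's proof avoids Richardson varieties entirely and runs in two short steps. First, one checks that the splitting of $R$ compatibly splits each $H$-fixed point $x_w$: since $x_w\in X_w$, there are $H$-algebra maps $\bigoplus_\la\Bbbk_{-w\la}\hookrightarrow R_w\rightarrow\bigoplus_\la\Bbbk_{-w\la}$ composing to the identity, and the splitting preserves $H$-weight spaces, so it preserves the ideal $I_w$ of $x_w$. Second, one invokes \cite[Proposition 4.1.8]{BK05}: a $B^-$-canonical splitting (Corollary~\ref{B-can1}) that preserves $I_w$ automatically preserves $I^w:=\bigcap_{b\in B^-}b\cdot I_w$, and by Lemma~\ref{Z-dense2} this ideal cuts out $\overline{B^-x_w}=\overline{\hB^-x_w}=\bX^w$. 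This is the key idea your proposal is missing: canonicity of the splitting, not subword compatibility on the BSDH side, is what propagates compatibility from a single $H$-fixed point to its entire $B^-$-orbit closure. I would recommend restructuring your argument around that mechanism rather than trying to force the opposite stratification through the BSDH resolution.
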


\begin{proof}
We argue along the line of \cite[Proposition 5.3]{KS14}, that was stated with the symmetrizability assumption (that we drop here).

We already know that the scheme $\mathbb X$ (or rather its projective coordinate ring) admits a $B^-$-canonical splitting by Corollary \ref{B-can1}.

We show that our splitting splits the $H$-fixed points as in \cite[Proof of Proposition 5.3 Assertion I\!I]{KS14}. The $H$-fixed point $x_w$ of $\bX$ corresponding to $w \in W$ is contained in $X_w$. Hence, we have $H$-algebra morphisms
$$\bigoplus_{\la \in P_+} \Bbbk_{-w \la} \hookrightarrow R_w \rightarrow \bigoplus_{\la \in P_+} \Bbbk_{-w \la} $$
corresponding to $x_w \in X_w$, whose composition is the identity. As our Frobenius splitting induces that of $R_w$ and preserves $H$-weight spaces, we conclude that our splitting splits the $H$-fixed points of $\bX$ by Lemma \ref{fixed}.

We show that our splitting splits each $\bX^w$ compatibly as in \cite[Proof of Proposition 5.3 Assertion I\!I\!I]{KS14} to complete the proof. Let $I_w$ be the ideal of $R$ corresponding to $x_w$. The ideal $I_w$ is preserved by our Frobenius splitting. Therefore, the ideal $I^w := \cap_{b \in B^-} b \cdot I_w \subset R$ is preserved by our $B^-$-canonical splitting thanks to \cite[Proposition 4.1.8]{BK05}. By Lemma \ref{Z-dense2}, the ideal $I^w$ defines the Zariski closure of $\hB^- x_w$ (as that is the same as $B^- x_w$) inside $\bX$, that is $\bX^w$. It follows that $\bX$ splits compatibly with $\bX^w$ through our splitting as required.
\end{proof}

\begin{rem}
According to Kumar-Schwede \cite{KS14}, the essential part of our proof of Corollary \ref{B-can-} traces back to a result of Olivier Mathieu. As the author has no access to it, he cites it from \cite{KS14}.
\end{rem}

\begin{cor}
For each $w \in W$, the scheme $\mathbb X^w$ is integral.
\end{cor}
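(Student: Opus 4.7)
The plan is to prove reducedness and irreducibility of $\bX^w$ separately, using the Frobenius splitting for the former and the orbit description for the latter.

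For irreducibility, I would use that $\bX^w = \overline{\bO^w}$ where $\bO^w = \hN^- x_w$. The group $\hN^-$ is pro-unipotent, realized as the inverse limit of the connected smooth unipotent algebraic groups $N^- ( n )$ with surjective transition maps, so $\hN^-$ is irreducible as a scheme. The orbit map $\hN^- \to \bO^w$, $g \mapsto g x_w$, is surjective, and a continuous surjective image of an irreducible space is irreducible; consequently its closure $\bX^w$ is irreducible.

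For reducedness, I would invoke Corollary \ref{B-can-}, which says that the $B^-$-canonical Frobenius splitting $\phi$ of $\bX$ (equivalently of the graded ring $R$, via Theorem \ref{F-rel}) is compatible with $\bX^w$, i.e. preserves the homogeneous ideal $I^w \subset R$ cutting out $\bX^w$. Then $\phi$ descends to a Frobenius splitting $\bar\phi$ of the quotient $R / I^w$. But any commutative ring admitting a Frobenius splitting is reduced: if $\bar f \in R/I^w$ satisfies $\bar f^p = 0$, then
$$\bar f = \bar \phi ( \bar f \cdot 1 ) = \bar\phi ( \bar f^p ) = \bar\phi ( 0 ) = 0,$$
since $\bar f \cdot 1 \in (R/I^w)^{(1)}$ equals $\bar f^p$ viewed in $R/I^w$. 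A similar iteration handles higher-order nilpotents. Hence $R/I^w$ is reduced, and therefore so is $\bX^w$. Combined with irreducibility, this gives integrality.

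The main obstacle I anticipate is technical rather than conceptual: the Frobenius-splitting formalism in \cite{BK05} is stated for separated schemes of finite type, whereas $\bX$ and $\bX^w$ are not finite type. I would sidestep this by working at the level of the graded coordinate ring $R$, where a Frobenius splitting is a purely algebraic datum whose reducedness-forcing property holds in any commutative ring. One should also note that the corollary is naturally stated under $\mathrm{char} \, \Bbbk > 0$ (inherited from the previous results in this section); the integrality in characteristic zero would require a separate argument, e.g. via the parametrization $\bO^w \cong \A^{\infty}$ inside a suitable standard open and the fact that reducedness can be checked on an open cover.
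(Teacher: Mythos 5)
Your positive-characteristic argument is essentially the paper's: the paper simply cites \cite[Proposition 1.2.1]{BK05} applied to Corollary \ref{B-can-}, which is exactly the ``compatibly split implies reduced'' mechanism you spell out at the level of the graded ring $R/I^w$; working at the ring level is a legitimate way to sidestep the finite-type hypotheses of \cite{BK05}, and irreducibility via the orbit-closure description $\bX^w=\overline{\hN^- x_w}$ is fine (the coordinate ring of $\hN^-=\varprojlim_n N^-(n)$ is a direct limit of domains along injective transition maps, so $\hN^-$ is integral and its orbit closure is irreducible).

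The gap is in characteristic zero, which the corollary does cover (unlike the preceding corollaries in this section, it carries no hypothesis on $\mathrm{char}\,\Bbbk$). Your proposed fix --- checking reducedness on an open cover using $\bO^w\cong\A^{\infty}$ --- does not work as stated: $\bO^w$ is only a dense open subset of $\bX^w$, and reducedness of a dense open subscheme says nothing about nilpotents supported on the boundary $\bX^w\setminus\bO^w$; nor is $\bX^w$ covered by copies of $\A^{\infty}$. The paper's route is reduction mod $p$: integrality of $\bX^w$ is reduced to integrality of the ring $R^w$, and the semicontinuity statement \cite[Proposition 1.6.5]{BK05} is applied to the finitely generated subalgebras of $R^w$ generated by finitely many $H$-weight spaces, importing the positive-characteristic conclusion into characteristic zero. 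You would need to add an argument of this kind, or else justify carefully that the scheme structure on $\bX^w$ used throughout is the scheme-theoretic closure of the reduced scheme $\bO^w$ (which is automatically reduced) --- but that identification with the ideal-theoretic structure is itself part of what the splitting argument is establishing.
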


\begin{proof}
Apply \cite[Proposition 1.2.1]{BK05} to Corollary \ref{B-can-} if $\mathrm{char} \, \Bbbk > 0$. As the integrality of $\mathbb X^w$ follows by the integrality of $R^w$, we apply \cite[Proposition 1.6.5]{BK05} to subalgebras of $R^w$ generated by finitely many $H$-weight spaces (so that it is finitely generated) to deduce the integrality in $\mathrm{char} \, \Bbbk = 0$.
\end{proof}

By restricting $\cO_{\bX} ( \la )$ ($\la \in P$), we obtain a line bundle $\cO_{\bX^w} ( \la )$ on $\bX^w$ for each $w \in W$.

Let $\tJ \subset \tI$. Consider the subring
$$R_{\tJ} := \bigoplus _{\la \in P_{+}^{\tJ}} L ( \la )^{\vee} \subset R.$$
We set $\bX_{\tJ} := \mathrm{Proj} \, R_{\tJ}$. This also defines a line bundle $\cO_{\bX_{\tJ}} ( \la )$ for each $\tJ \subset \tI$ and $\la \in P_+^{\tJ}$. We have natural map
$$\pi_{\tJ} : \bX \longrightarrow \bX_{\tJ}.$$

\begin{lem}\label{par}
Let $\tJ \subset \tI$. The morphism $\pi_{\tJ}$ is $G^-$-equivariant and surjective. We have a $B^-$-canonical splitting of $\bX_{\tJ}$ that is compatible with the $\hB^{-}$-orbits.
\end{lem}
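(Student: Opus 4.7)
The $G^-$-equivariance of $\pi_{\tJ}$ is essentially tautological: the inclusion of graded rings $R_{\tJ} = \bigoplus_{\la \in P_+^{\tJ}} L ( \la )^{\vee} \hookrightarrow R = \bigoplus_{\la \in P_+} L ( \la )^{\vee}$ is a direct sum inclusion of $G^-$-submodules, and so the morphism on $\mathrm{Proj}$ it induces is $G^-$-equivariant.

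For surjectivity, my plan is to run parabolic analogs of Proposition \ref{std}, Lemma \ref{fixed}, Theorem \ref{id}, and Corollary \ref{union} for $\bX_{\tJ}$. First, inverting the vectors $v_{\varpi_i}^* \in L ( \varpi_i )^{\vee}$ for $i \in \tI \setminus \tJ$ will identify a standard affine open $\bO^{e}_{\tJ} \subset \bX_{\tJ}$ with a quotient of $\hN^-$. Second, the same argument as in Lemma \ref{fixed}, combined with the classification of $H$-fixed points of the generalized thin Schubert varieties $X_{w,\tJ}$ from Corollary \ref{thin-mproj}, will show that $\bX_{\tJ}^H$ is in bijection with $W / W_{\tJ}$, where $W_{\tJ} := \left< s_i \right>_{i \in \tJ}$, with each fixed point arising as $\pi_{\tJ} ( x_w )$. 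Third, translating $\bO^e_{\tJ}$ by $N ( H )$ and using $\mathop{SL} ( 2, i )$-actions (exactly as in the proof of Theorem \ref{id}) will yield the decomposition $\bX_{\tJ} = \bigsqcup_{[w] \in W / W_{\tJ}} \hN^- \cdot \pi_{\tJ} ( x_w )$. Since $\pi_{\tJ}$ is $\hN^-$-equivariant and sends $\bO^w$ onto $\hN^- \cdot \pi_{\tJ} ( x_w )$, surjectivity of $\pi_{\tJ}$ is then immediate.

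The $B^-$-canonical splitting of $\bX_{\tJ}$ will be obtained by restricting the $B^-$-canonical splitting of $R$ from Corollary \ref{B-can1}: since that splitting is obtained from Theorem \ref{F-rel} applied to $B$-canonical splittings of Bott-Samelson resolutions, it preserves the $P$-grading and the $H$-weight decomposition on each homogeneous component; and $R_{\tJ} \subset R$ is exactly the sum of those homogeneous components indexed by the saturated submonoid $P_+^{\tJ} \subset P_+$, which is closed under the $p$-th root operation inherent to a graded Frobenius splitting. If a more direct construction is preferred, one may also rerun Lemma \ref{B-canonical} itself using the generalized thin Schubert varieties $X_{w, \tJ}$ and the coordinate rings $R_{w, \tJ}$ of Corollary \ref{thin-mproj}, taking the inverse limit to obtain a $B$-canonical splitting of $R_{\tJ}$, and then applying the $\mathop{SL} ( 2, i )$-integrability trick of Corollary \ref{B-can1}. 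For compatibility with $\hB^-$-orbits, the argument of Corollary \ref{B-can-} transfers verbatim: the $H$-fixed points $\pi_{\tJ} ( x_w )$ are split (by the $H$-weight-preservation), and the ideal $I^{[w]} := \bigcap_{b \in B^-} b \cdot I_{\pi_{\tJ} ( x_w )}$ is preserved by the canonical splitting via \cite[Proposition 4.1.8]{BK05}, and cuts out $\overline{\bO^{[w]}_{\tJ}}$ by a parabolic version of Lemma \ref{Z-dense2}.

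The main obstacle is the surjectivity: everything downstream reduces to routine transplantation, but establishing the parabolic $\hB^-$-orbit stratification of $\bX_{\tJ}$ requires parallel reconstruction of the structural results on $\bX$ from Section 1 (fixed points, standard open big cell, $G^-$-action) in the parabolic setting. Once that stratification is in place, both $G^-$-equivariant surjectivity and the compatibility of the canonical splitting with each $\hB^-$-orbit fall out mechanically.
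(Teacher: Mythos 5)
Your proposal is correct and follows essentially the same route as the paper: $G^-$-equivariance from the module structure on the graded rings, surjectivity via the $H$-fixed-point analysis of Lemma \ref{fixed} combined with the orbit argument of Theorem \ref{id} (together with the known surjectivity in the thin case), and the $B^-$-canonical splitting with its orbit-compatibility obtained by restricting the graded splitting of $R$ to the saturated subring $R_{\tJ}$. The paper's own proof is simply a terser statement of this same plan.
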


\begin{proof}
Since the dual of the homogeneous coordinate rings of $\bX$ and $\bX_{\tJ}$ admits the $\widehat{B}^-$-action and $N ( H )$-action, we conclude that $\pi_{\tJ}$ is equivariant with respect to the group generated by $\widehat{B}^-$ and $N ( H )$, that is $G^-$.

The $B^{-}$-canonical splitting of $\bX$ induces that of $\bX_{\tJ}$ through the description of its projective coordinate ring. This must be compatible with the Zariski closure of the image of $\hB^{-}$-orbits. Hence, it remains to show that $\pi_{\tJ}$ is surjective.

Fix $w \in W$. The analogous map to $\pi_{\tJ}$ defined for $X_{w}$ is surjective (see \cite[Proposition 7.1.15]{Kum02}). The same proof as Lemma \ref{fixed} (relying on \cite{Kum02}) implies $\bX_{\tJ}^{H} \subset \pi_{\tJ} ( \bX^{H} )$. Hence, the same argument as in Theorem \ref{id} yields that every $\hB^{-}$-orbit of $\bX_{\tJ}$ is the image of a $\hB^{-}$-orbit of $\bX$ as required.
\end{proof}

\begin{lem}\label{push}
Let $\tJ \subset \tI$. The fiber of $\pi_{\tJ}$ is isomorphic to the thick flag manifold of the Kac-Moody subalgebra of $\g$ corresponding to $\tJ$. Moreover, we have $( \pi_{\tJ} )_* \cO_{\bX} ( \la ) \cong \cO_{\bX_{\tJ}} ( \la )$ for $\la \in P_+^{\tJ}$.
\end{lem}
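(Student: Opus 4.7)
The plan is to reduce both assertions to computations on the affine open $\bO^e \cong \hN^-$ of $\bX$, then to transport the conclusion to arbitrary fibers via the $G^-$-equivariance of $\pi_{\tJ}$ established in Lemma \ref{par}. I will first identify the scheme-theoretic fiber $F := \pi_{\tJ}^{-1}(\pi_{\tJ}(x_e))$ with the thick flag manifold $\bX(\g_{\tJ})$ of the Kac-Moody subalgebra $\g_{\tJ}$ generated by $\{E_i, F_i\}_{i \in \tJ}$, and then deduce the push-forward statement via the projection formula.

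\textbf{Fiber identification.} On the chart $\bO^e$, the map $\pi_{\tJ}|_{\bO^e}$ presents $\bO^e_{\tJ}$ as a quotient $\hN^-/K$, where $K$ is the pro-algebraic stabilizer of $\pi_{\tJ}(x_e)$ in $\hN^-$; hence $F \cap \bO^e$ is the identity coset and is scheme-theoretically isomorphic to $K$. I claim $K = \hN^-_{\tJ}$, the pro-unipotent group attached to $\gn^-_{\tJ}$. An element $\xi \in \gn^-$ lies in $\mathrm{Lie} \, K$ exactly when $\xi \cdot v_{\la} \in \Bbbk v_{\la}$ for all $\la \in P_+^{\tJ}$; since $\xi$ strictly decreases weights, this forces $\xi \cdot v_{\la} = 0$. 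Choosing $\la \in P_+^{\tJ}$ with $\langle \alpha_i^{\vee}, \la \rangle > 0$ for every $i \in \tI \setminus \tJ$, Lemma \ref{maxint} identifies the $\gn^-$-annihilator of $v_{\la}$ with the Lie subalgebra generated by $\{F_i\}_{i \in \tJ}$, namely $\gn^-_{\tJ}$. Gluing $F \cap \bO^e \cong \hN^-_{\tJ}$ with the $N(H)$-translates indexed by $W_{\tJ}$ then reconstructs precisely the scheme $\bX(\g_{\tJ})$ obtained by applying the procedure of Section 1 to $\g_{\tJ}$.

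\textbf{Push-forward.} Both $\cO_{\bX}(\la)$ and $\cO_{\bX_{\tJ}}(\la)$ are cut out from the same graded piece $L(\la)^{\vee}$ of $R \supset R_{\tJ}$ for $\la \in P_+^{\tJ}$, so $\pi_{\tJ}^* \cO_{\bX_{\tJ}}(\la) \cong \cO_{\bX}(\la)$. The projection formula (valid because $\cO_{\bX_{\tJ}}(\la)$ is a line bundle) reduces the claim to the special case $(\pi_{\tJ})_* \cO_{\bX} \cong \cO_{\bX_{\tJ}}$, which by the fiber identification amounts to $H^0(\bX(\g_{\tJ}), \cO) = \Bbbk$; this is Theorem \ref{tBW} applied to $\g_{\tJ}$ at $\la = 0$.

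\textbf{Main obstacle.} The principal difficulty is making the fiber identification scheme-theoretic rather than merely set-theoretic: one must verify that $K \subset \hN^-$ is reduced and that its pro-unipotent structure matches $\hN^-_{\tJ}$ exactly. This hinges on the compatibility of the Chevalley-Kostant $\Z$-forms of $U(\gn^-_{\tJ})$ and $U(\gn^-)$, together with Mathieu's description of $\hN^-$ already used in Theorem \ref{pairing}.
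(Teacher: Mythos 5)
Your overall strategy---identify the fiber over the base point inside the big cell $\bO^e$, translate by $N(H)$, and deduce the pushforward from triviality of $H^0$ of the fiber---is the same as the paper's, and your packaging of the second assertion (pull back $\cO_{\bX_{\tJ}}(\la)$, apply the projection formula, reduce to $(\pi_{\tJ})_*\cO_{\bX}\cong\cO_{\bX_{\tJ}}$) is a clean version of what the paper does. The difficulty is that your key step, the scheme-theoretic identification of $F\cap\bO^e$, is exactly the point you defer to the ``main obstacle'' paragraph, and the tools you cite there do not close it. Two concrete problems. First, Lemma \ref{maxint} only says that $L(\la)$ is the maximal integrable quotient of $M(\la)$; it does not compute the $\gn^-$-annihilator of $v_{\la}$. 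That this annihilator equals the subalgebra generated by $\{F_i\}_{i\in\tJ}$ is true but requires its own argument (one inclusion comes from the triviality of the $\g_{\tJ}$-module generated by $v_{\la}$; the reverse inclusion, that no root vector outside that subalgebra kills $v_{\la}$, is not automatic). Second, and more seriously, knowing $\mathrm{Lie}\,K$ does not determine the group scheme $K$: for pro-unipotent groups over a field of characteristic $p>0$---the case the whole paper is aimed at---a stabilizer can be non-reduced or strictly larger than the subgroup integrating the expected Lie algebra. This is precisely why the paper's proof never argues through $\mathrm{Lie}\,K$: it obtains reducedness of the fiber from the $G^-$-homogeneity of $\bX_{\tJ}$, writes a point of the fiber as $g\dot{w}B$ and constrains the $H$-weights of $g\dot{w}v_{\varpi_i}$ inside $\P(L(\varpi_i)^{\wedge})$ to show the coordinates land in the integrable $\g_{\tJ}$-submodules $L'(\varpi_i)$, and then identifies the coordinate ring of the closed reduced fiber with $\bigoplus_i L'(\varpi_i)^{\vee}$ via the Pl\"ucker presentation already proved for $\g_{\tJ}$.

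A smaller but genuine gap is the sentence about ``gluing $F\cap\bO^e$ with the $N(H)$-translates indexed by $W_{\tJ}$'': you must first show that the fiber is \emph{contained} in the union of those translates, i.e.\ that every point of $F$ has the form $g\dot{w}B$ with $w\in W_{\tJ}$ and $g$ in the negative unipotent group of $\g_{\tJ}$, and that the transition data agree with those of $\bX(\g_{\tJ})$. That containment is most of the work in the paper's proof, not a formal consequence of the computation on the single cell $\bO^e$. (Beware also of a notational clash: what you call $\hN^-_{\tJ}$ is the paper's complementary group $\widehat{U}$, whereas the paper reserves $\hN^-_{\tJ}$ for the pro-unipotent radical of the parabolic $\hB^-_{\tJ}$.)
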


\begin{proof}
Let $\g'$ denote the Kac-Moody algebra that is a subalgebra of $\g$ corresponding to $\tJ$, and let $W'$ denote its Weyl group that is a subgroup of $W$. Let $R^{\tJ}$ be the minimal homogeneous coordinate ring of $\pi_{\tJ}^{-1} ( B_{\tJ} / B_{\tJ} )$ so that we have an algebra map $\phi : R \rightarrow R^{\tJ}$ corresponding to $\pi_{\tJ}^{-1} ( B_{\tJ} / B_{\tJ} ) \subset \bX$. Since $\bX_\tJ$ is $G^-$-homogeneous, we find that the scheme $\pi_{\tJ}^{-1} ( B_{\tJ} / B_{\tJ} )$ is reduced.

Let $\hN_{\tJ}^- \subset \hB^-$ be the pro-unipotent radical of $\hB^-_{\tJ}$. We find a $H$-stable complementary pro-unipotent group $\widehat{U} \subset \hB^-$ so that $\hN^- = \widehat{U} \hN_{\tJ}^-$, $\widehat{U}$ normalizes $\hN_{\tJ}^-$, and $\widehat{U} \cap \hN_{\tJ}^- = \{ \mathrm{id} \}$.

A point $x \in \bX$ is written as $x = g \dot{w} B$ for some $g \in \hN$ and a lift $\dot{w} \in N ( H )$ of $w \in W$, that gives a point $[g \dot{w} v_{\varpi_i}] \in \P ( L ( \varpi_i )^{\wedge})$ for each $i \in \tI$. If $g \not\in \widehat{U}$, then we have
$$g \dot{w} v_{\varpi_i} \in \Bbbk g v_{\varpi_i} \not\subset \Bbbk v_{\varpi_i} \hskip 5mm \text{for some} \hskip 2mm i \not\in \tJ \text{ and every } w \in W'.$$
Note that $w \in W$ belongs to $W'$ if and only if $w \varpi_i = \varpi _i$ for every $i \not\in \tJ$. Therefore, we find that every point in $\pi_{\tJ}^{-1} ( B_{\tJ} / B_{\tJ} )$ is of the form $x = g \dot{w} B$ for $g \in \widehat{U}$ and $w \in W'$ (by Corollary \ref{union} and Lemma \ref{par}).

Therefore, if we represent a point $x \in \pi_{\tJ}^{-1} ( B_{\tJ} / B_{\tJ} )$ as a point $( [x_i] ) \in \prod_{i \in \tI} \P ( L ( \varpi_i )^{\wedge} )$ (using Definition \ref{mproj}), then the vector $x_i$ does not contain $H$-weights except for $\varpi_i - \Z_{\ge 0} \{ \al_i \mid i \in \tJ \}$. By the cocyclicity of the dual Verma modules, it means that $x_i$ belongs to the (maximal) integrable highest weight module $L ' ( \la )$ of $\g'$ spanned by $v_{\varpi_i}$. Moreover, the $H$-weight comparison implies that $L ' ( \la )^{\vee} \subset L ( \la ) ^{\vee}$ is precisely the $\hN^-_{\tJ}$-invariant part.

Since $\pi_{\tJ}^{-1} ( B_{\tJ} / B_{\tJ} )$ is $\hN^-_{\tJ}$-invariant and reduced, it follows that the map $\phi$ factors through
$$R^{\tJ} := \bigoplus_{i \in \tJ} L ' ( \varpi_i )^{\vee},$$
that is the homogeneous coordinate ring of the thick flag manifold of $\g'$. As $\pi_{\tJ}^{-1} ( B_{\tJ} / B_{\tJ} )$ is a closed subscheme of $\bX$, we conclude the $\phi$ must be in fact an equality. Hence, the fibers of $\pi_{\tJ}$ are isomorphic to the thick flag manifold of $\g'$.

By examining the sections on the fibers of $\pi_{\tJ}$, we conclude that $( \pi_{\tJ} )_* \cO_{\bX} ( \la )$ is a line bundle. Since $\bX$ is homogeneous and $( \pi_{\tJ} )_* \cO_{\bX} ( \la )$ is $G^-$-equivariant, we conclude the assertion by the comparison (of characters) on fibers.
\end{proof}

\begin{thm}[\cite{KS09} second part of Conjecture 8.10]\label{rest-surj}
For each $\la \in P_+$ and $w, v \in W$ so that $v < w$, the natural restriction map
$$H^0 ( \bX^v, \cO_{\bX^v} (\la) ) \longrightarrow H^0 ( \bX^w, \cO_{\bX^w} (\la) )$$
is surjective.
\end{thm}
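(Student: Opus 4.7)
The plan is to reduce the statement to the base case $v = e$ (surjectivity from the ambient $\bX$) and then deduce that base case from the $B^-$-canonical Frobenius splitting of $\bX$ compatible with each $\bX^w$ (Corollary \ref{B-can-}) together with Theorem \ref{tBW}.

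For the reduction, $v < w$ implies $\bX^w \subset \bX^v \subset \bX$ by Theorem \ref{closure-rel}, so the restriction to $\bX^w$ factors as
\[
H^0(\bX, \cO_\bX(\la)) \longrightarrow H^0(\bX^v, \cO_{\bX^v}(\la)) \longrightarrow H^0(\bX^w, \cO_{\bX^w}(\la)).
\]
Granting the base case $v = e$ for every element of $W$, both the first arrow (applied with $w$ replaced by $v$) and the composite (applied with $w$) are surjective; hence the intermediate restriction is surjective as well. Note that this reduction is vacuous when $v = e$, so we have not created any circularity.

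For the base case, I would use Corollary \ref{B-can-} combined with Theorem \ref{F-rel} to translate the compatible splitting of $(\bX, \bX^w)$ into a Frobenius splitting of $R = \bigoplus_{\la \in P_+} L(\la)^\vee$ preserving the homogeneous ideal $I^w \subset R$ cutting out $\bX^w$, so that the quotient surjection $R \twoheadrightarrow R/I^w$ is degree-wise surjective. By Theorem \ref{tBW}, $R_\la = H^0(\bX, \cO_\bX(\la))$, so the remaining task is to identify $(R/I^w)_\la$ with $H^0(\bX^w, \cO_{\bX^w}(\la))$. Following the template of Theorem \ref{tBW}'s proof, I would use Lemma \ref{Z-dense2} to embed $H^0(\bX^w, \cO_{\bX^w}(\la))$ into an inverse limit of sections over finite-type Richardson-type pieces $X_{w_k} \cap \bX^w$, use the open affine $\bO^w \subset \bX^w$ with $H^0(\bO^w, \cO(\la)) \cong M(w\la)^\vee$ to enforce $H$-semisimplicity and a weight upper bound, and then rely on the Frobenius splitting's preservation of $H$-weights to pin the image down to $(R/I^w)_\la$.

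The chief technical difficulty lies in this last identification $H^0(\bX^w, \cO(\la)) = (R/I^w)_\la$: the infinite-dimensional nature of $\bX^w$ precludes a direct application of Serre vanishing, so the argument must be run on finite-type approximations (which inherit compatible splittings from thin Schubert varieties via Theorem \ref{thin-coh}) and then propagated to the limit using $H$-finiteness together with the $\bO^w$ envelopment. Once this identification is in hand, the claimed surjectivity is nothing more than the trivial graded-ring surjection $R_\la \twoheadrightarrow (R/I^w)_\la$ induced by the ideal inclusion.
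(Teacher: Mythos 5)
Your reduction to the case $v=e$ is exactly what the paper does and is fine. The problem is the base case. You correctly locate the ``chief technical difficulty'' in identifying $H^0(\bX^w,\cO_{\bX^w}(\la))$ with $(R/I^w)_\la$, but this identification is not a technical afterthought --- given Theorem \ref{tBW} it is \emph{equivalent} to the surjectivity you are trying to prove, so your plan is essentially circular: the graded quotient $R_\la \twoheadrightarrow (R/I^w)_\la$ is indeed trivial, and all of the content has been pushed into the unproved claim that $(R/I^w)_\la$ exhausts $H^0(\bX^w,\cO_{\bX^w}(\la))$ (i.e.\ that sections extend). The standard way to extract section-restriction surjectivity from a compatible splitting is \cite[Theorem 1.2.8]{BK05}, which needs an ample line bundle on a \emph{finite type} proper scheme (Serre vanishing); neither $\bX$ nor $\bX^w$ is of finite type, and $\cO_{\bX}(\la)$ need not even be ample for $\la\in P_+$. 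So Corollary \ref{B-can-} plus Theorem \ref{F-rel} alone do not yield the base case, and your sketch of ``finite-type approximations propagated to the limit'' does not say what actually makes those approximations work.

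Concretely, the missing ingredients are: (a) the reduction along $\pi_{\tJ}$ with $\tJ=\{i \mid \langle\al_i^\vee,\la\rangle=0\}$ (Lemmas \ref{par}, \ref{push}) so that the line bundle becomes ample; (b) the identification of the finite-type pieces $\bX^v\cap X_w$ with the Richardson varieties $\overline{B^-x_v}\cap\overline{Bx_w}$ of Kumar--Schwede --- this requires the root-partition argument with the linear functional $h$ and is where the compatible splitting actually comes from, namely \cite[Proposition 5.3]{KS14}, not from Corollary \ref{B-can-}; (c) the application of \cite[Theorem 1.2.8]{BK05} on these finite-type pieces, the lift to $\mathrm{char}\,\Bbbk=0$ via \cite[Corollary 1.6.3]{BK05} (your argument as written only covers positive characteristic, whereas the theorem is stated over any algebraically closed field), and the Mittag--Leffler inverse limit giving surjectivity of $H^0(\bX\cap X,\cO(\la))\rightarrow H^0(\bX^v\cap X,\cO(\la))$; and (d) the final step, which is not an identification of $H^0(\bX^w,\cO(\la))$ at all but a diagram chase (the square (\ref{4term})): one only needs that $H^0(\bX^v,\cO(\la))$ is $H$-finite (via $\bO^v$) and that the $H$-finite part of $H^0(\bX\cap X,\cO(\la))\cong L(\la)^*$ is $L(\la)^\vee = H^0(\bX,\cO(\la))$, so that $H$-finite sections downstairs lift. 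The actual computation $H^0(\bX^w,\cO(\la))\cong L^w(\la)^\vee$ (Theorem \ref{Gamma}) is a \emph{consequence} of the surjectivity, not an input to it.
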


\begin{proof}
We set $\tJ := \{i \in \tI \mid \left< \al^{\vee}_i, \la \right> = 0 \}$. By Lemma \ref{push}, the assertion reduces to the surjectivity of
$$H^0 ( \bX^v_{\tJ}, \cO_{\bX^v_{\tJ}} (\la) ) \longrightarrow H^0 ( \bX^w_{\tJ}, \cO_{\bX^w_{\tJ}} (\la) ),$$
where $\bX^w_{\tJ} := \pi_{\tJ} ( \bX ^{w} )$ for each $w \in W$. We might omit the subscript $\tJ$ in the below for simplicity. Note that $\cO_{\bX_{\tJ}} (\la) = \cO_{\bX} (\la)$ is ample. By the associativity of the restriction maps (and Theorem \ref{closure-rel}), we can assume $v = e$.

For each $v \in W$ and $w \in W$, we have a restriction map
$$\varphi^v_w : H^0 ( \bX^v, \cO_{\bX^v} (\la) ) \longrightarrow H^0 ( \bX^v \cap X_w, \cO_{\bX^v \cap X_w} (\la) ).$$
By Lemma \ref{Z-dense2}, the inverse limit of $\{ \varphi^v_w\}_w$ yields an inclusion
\begin{equation}
\varphi^v : H^0 ( \bX^v, \cO_{\bX^v} (\la) ) \hookrightarrow H^0 ( \bX^v \cap X, \cO_{\bX^v \cap X} (\la) ).\label{thick-to-thin}
\end{equation}

Let $\Psi \subset \Delta^+$ be a finite set. Let us consider a linear functional $h$ on $\Delta^+ \subset X ^* ( H ) \otimes_\Z \R$ so that $0 < h ( \alpha_i )$ for each $i \in \tI$ and $h ( \Psi ) < 1$. Then, the subset
$$\Delta^+ ( h ) := \{ \beta \in \Delta^+ \mid h ( \beta ) < 1 \} \subset \Delta^+$$
is finite, and every $\Z_{\ge 0}$-linear combination of elements of $\Delta^+ \backslash \Delta^+ ( h )$ does not belong to $\Delta^+ ( h )$.

For each $w \in W$, the set of $H$-weights of $\bigoplus _{i \in \tI} L_w ( \varpi_i )$ is finite, and hence so is the set $\Psi_w$ of positive roots obtained by the difference of two $H$-weights of $\bigoplus _{i \in \tI} L_w ( \varpi_i )$. Applying the above construction, we can find a partition $\Delta^+ = \Delta^+_1 \sqcup \Delta^+_2$ ($\Delta^+_1$ is $\Delta^+ ( h )$ obtained by setting $\Psi = \Psi_w$) so that every $x \in \hN^-$ factors into $x = x_1 x_2$, where $x _2$ is the product of one-parameter subgroup corresponding to $\Delta^+_2$, and
$$L _w ( \varpi_i ) \cap x_2 L _w ( \varpi_i ) = \{ v \in L _w ( \varpi_i ) \mid x_2 v = v \}.$$ 
This implies
$$\bX^v \cap X_w = \overline{\hB^- x_v} \cap \overline{B x_w} = \overline{B^- x_v} \cap \overline{B x_w},$$
where the most RHS is the definition of the Richardson variety in \cite{KS14} (when $\tJ = \emptyset$).

In case $\tJ = \emptyset$, \cite[Proposition 5.3]{KS14} equips $( \bX^v \cap X_{w} )$ a Frobenius splitting compatible with $( \bX^{v'} \cap X_{w'} )$'s in its closure.

In case $\tJ \neq \emptyset$, the pullback of $( \bX^v_{\tJ} \cap X_{w, \tJ} )$ to $\bX$ is a (possible infinite) union of Richardson varieties of $X \subset \bX$. Therefore, we can transplant the Frobenius splitting $\phi$ (that we have constructed through the Richardson varieties) on $\bX$ to a Frobenius splitting of $( \bX^v_{\tJ} \cap X_{w, \tJ} )$ compatible with $( \bX^{v'}_{\tJ} \cap X_{w', \tJ} )$'s in its closure through
$$\mathsf{Fr} _* \cO _{\bX_{\tJ}} \to ( \pi_{\tJ} ) _* \mathsf{Fr} _* \cO _{\bX} \stackrel{\pi_* \phi}{\longrightarrow} ( \pi_{\tJ} ) _* \cO _{\bX} \cong \cO_{\bX_{\tJ}}.$$

Therefore, \cite[Theorem 1.2.8]{BK05} yields that the map
$$H^0 ( \bX^v \cap X_w, \cO_{\bX^v \cap X_w} (\la) ) \longrightarrow \!\!\!\!\! \rightarrow H^0 ( \bX^{v'} \cap X_w, \cO_{\bX^{v'} \cap X_w} (\la) )$$
is surjective for every $w, v, v' \in W$ so that $v \le v'$ when $\mathrm{char} \, \Bbbk > 0$. Since the both of $(\bX^v \cap X_w)$ and $(\bX^{v'} \cap X_w)$ are finite type schemes, \cite[Corollary 1.6.3]{BK05} lifts this surjection to the case of $\mathrm{char} \, \Bbbk = 0$. Hence, we deduce a surjection
$$H^0 ( \bX^v \cap X, \cO_{\bX^v \cap X} (\la) ) \longrightarrow \!\!\!\!\! \rightarrow H^0 ( \bX^{v'} \cap X, \cO_{\bX^{v'} \cap X} (\la) )$$
for every $v, v' \in W$ so that $v \le v'$ by taking the inverse limits with respect to surjective inverse systems (so that they satisfies the Mittag-Leffler condition), regardless of the characteristic.

The space $H^0 ( \bX^v, \cO_{\bX^v} (\la) )$ is $H$-finite since
$$H^0 ( \bX^v, \cO_{\bX^v} (\la) ) \subset H^0 ( \mathbb O^v, \cO_{\mathbb O^v} (\la) ) \cong \Bbbk [\mathbb O^v] \otimes_{\Bbbk} \Bbbk _{-\la}.$$

In case $v = e$, the LHS of (\ref{thick-to-thin}) is given in Theorem \ref{tBW}, and the RHS is given in \cite{Mat88} (cf. Theorem \ref{thin-coh}). In particular, the LHS is the $H$-finite part of the RHS.
Therefore, the commutative diagram
\begin{equation}
\xymatrix{
H^0 ( \bX, \cO_{\bX} (\la) ) \ar@{^{(}->}[r] \ar[d] & H^0 ( \bX \cap X, \cO_{\bX \cap X} (\la) ) \ar@{->>}[d]\\
H^0 ( \bX^v, \cO_{\bX^v} (\la) ) \ar@{^{(}->}[r] & H^0 ( \bX^{v} \cap X, \cO_{\bX^{v} \cap X} (\la) )
}\label{4term}
\end{equation}
yields the surjectivity of the left vertical arrow, that implies our assertion.
\end{proof}

\begin{cor}\label{grest-surj}
Let $\mathbb Y, \mathbb Y'$ be reduced unions of thick Schubert varieties so that $\mathbb Y' \subset \mathbb Y$. Then, the natural restriction map
$$H^0 ( \mathbb Y, \cO_{\mathbb Y} (\la) ) \longrightarrow H^0 ( \mathbb Y', \cO_{\mathbb Y'} (\la) )$$
is surjective for each $\la \in P_+$.
\end{cor}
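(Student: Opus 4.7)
The plan is to mimic the proof of Theorem \ref{rest-surj}, replacing a single thick Schubert variety by a reduced union throughout. First, I would set $\tJ := \{ i \in \tI \mid \left< \al_i^{\vee}, \la \right> = 0 \}$ and apply Lemma \ref{push} to replace $\bX$ by $\bX_{\tJ}$, so that $\cO_{\bX}(\la)$ becomes ample; the images $\pi_{\tJ}(\mathbb Y)$ and $\pi_{\tJ}(\mathbb Y')$ remain reduced unions of thick Schubert varieties in $\bX_{\tJ}$ by the $G^-$-equivariance of $\pi_{\tJ}$ (Lemma \ref{par}), and the restriction map downstairs computes the one upstairs by $( \pi_{\tJ} )_* \cO_{\bX} ( \la ) \cong \cO_{\bX_{\tJ}} ( \la )$.

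For each $w \in W$, the intersection $\mathbb Y \cap X_w$ is a finite reduced union of Richardson varieties of the form $\bX^v \cap X_w$. By the argument recalled in the proof of Theorem \ref{rest-surj}, each Richardson variety is compatibly Frobenius split (the splitting being transplanted from $\bX$ via $\pi_{\tJ}$ when $\tJ \neq \emptyset$), and since the class of compatibly Frobenius split closed subschemes is stable under taking finite unions and intersections by \cite[Proposition 1.2.1]{BK05}, we obtain a Frobenius splitting of $\mathbb Y \cap X_w$ compatible with $\mathbb Y' \cap X_w$. Applying \cite[Theorem 1.2.8]{BK05} with the now ample $\cO_{\bX}(\la)$ yields the surjection
\[
H^0 ( \mathbb Y \cap X_w, \cO_{\bX}(\la) ) \longrightarrow \!\!\!\!\! \rightarrow H^0 ( \mathbb Y' \cap X_w, \cO_{\bX}(\la) )
\]
when $\mathrm{char}\, \Bbbk > 0$, and \cite[Corollary 1.6.3]{BK05} lifts it to characteristic zero. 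Passing to the inverse limit over $w \in W$ (the transition system is surjective, so Mittag-Leffler applies) produces the corresponding surjection for $\mathbb Y \cap X$ and $\mathbb Y' \cap X$.

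To finish, I would use the commutative square analogous to (\ref{4term}):
\[
\xymatrix{
H^0 ( \mathbb Y, \cO_{\bX} (\la) ) \ar@{^{(}->}[r] \ar[d] & H^0 ( \mathbb Y \cap X, \cO_{\bX} (\la) ) \ar@{->>}[d] \\
H^0 ( \mathbb Y', \cO_{\bX} (\la) ) \ar@{^{(}->}[r] & H^0 ( \mathbb Y' \cap X, \cO_{\bX} (\la) )
}
\]
Zariski density of $\mathbb Y \cap X$ in $\mathbb Y$ (Lemma \ref{Z-dense2} applied to each component) supplies the horizontal injections; $H$-finiteness of the left column, obtained via the injection $H^0 ( \mathbb Y, \cO_{\bX} (\la) ) \hookrightarrow \bigoplus_i H^0 ( \bX^{v_i}, \cO_{\bX} (\la) )$ into the components $\bX^{v_i}$ of $\mathbb Y$, identifies the left column with the $H$-finite part of the right, so surjectivity of the right vertical arrow forces that of the left. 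The hardest part will be coordinating the Frobenius splitting argument with the passage from thick to thin geometry in the presence of multiple components, but this parallels the single-component case of Theorem \ref{rest-surj} almost verbatim once the stability of compatible splittings under unions is invoked.
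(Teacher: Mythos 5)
Your proposal is correct and is essentially the paper's own proof, which is stated in one line: formally replace the single Richardson varieties $\bX^v \cap X_w$ by reduced unions of them (compatibly split along their intersections by the canonical splitting, via \cite[Proposition 1.2.1]{BK05}) throughout the proof of Theorem \ref{rest-surj}. The only point to handle slightly more carefully is your final diagram: it is cleanest to take $\mathbb Y = \bX$ there --- surjectivity of $H^0(\bX,\cO_{\bX}(\la)) \to H^0(\mathbb Y',\cO_{\mathbb Y'}(\la))$ already implies the statement since the restriction factors through $H^0(\mathbb Y,\cO_{\mathbb Y}(\la))$ --- so that the identification of the top-left entry with the $H$-finite part of the top-right is exactly the one already established in Theorem \ref{rest-surj}, rather than a new claim about general $\mathbb Y$.
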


\begin{proof}
We can formally replace $\bX^v \cap X_w$ with reduced unions of them (that are compatibly split along the intersections by our canonical splitting) in the proof of Theorem \ref{rest-surj} to deduce the assertion. 
\end{proof}

The following is a consequence of Theorem \ref{rest-surj} as described in \cite[\S 8]{KS09} after Conjecture 8.10 (when $\g$ is of affine type).

\begin{thm}\label{Gamma}
For each $\la \in P_+$, we have
$$H^0 ( \bX^w, \cO_{\bX^w} (\la) ) \cong L^w ( \la )^{\vee}.$$
\end{thm}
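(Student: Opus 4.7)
My plan is to realize $H^0(\bX^w, \cO_{\bX^w}(\la))$ as an explicit quotient of $H^0(\bX, \cO_{\bX}(\la)) = L(\la)^{\vee}$ and identify the kernel. Applying Theorem \ref{rest-surj} with $v = e$ (noting that $\bX^e = \bX$ since $\bO^e$ is dense in $\bX$) together with Theorem \ref{tBW} produces a surjection
$$L(\la)^{\vee} \twoheadrightarrow H^0(\bX^w, \cO_{\bX^w}(\la))$$
whose kernel $I_\la$ is the space of sections vanishing on $\bX^w$. Restricted duality applied to the inclusion $L^w(\la) \hookrightarrow L(\la)$ identifies $L(\la)^{\vee}/\mathrm{Ann}(L^w(\la))$ with $L^w(\la)^{\vee}$, so it suffices to show $I_\la = \mathrm{Ann}(L^w(\la)) := \{f \in L(\la)^{\vee} : f|_{L^w(\la)} = 0\}$.

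To establish this equality, I would use the dense open subset $\bO^w \subset \bX^w$. Since $\bX^w = \overline{\bO^w}$ is integral (by the corollary preceding Lemma \ref{par}), a section $f$ vanishes on $\bX^w$ if and only if it vanishes on $\bO^w$. Each point of $\bO^w = \hN^- \cdot x_w$ is represented inside $\P(L(\la)^{\wedge})$ by the line $\Bbbk \cdot (n \cdot v_{w\la})$ for some $n \in \hN^-$; since $f$ has only finitely many nonzero $H$-weight components and each weight space of $L(\la)$ coincides with the corresponding piece of the completion $L(\la)^{\wedge}$, the evaluation $f(n \cdot v_{w\la})$ is a finite sum, and its vanishing for all $n \in \hN^-$ characterizes $f|_{\bO^w} = 0$. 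Finally, extracting coefficients of $z^n$ in $\rho_{-\al_i}(z) \cdot v = \sum_{n \ge 0} z^n F_i^{(n)} \cdot v$ inside the integrable module $L(\la)$ and iterating over the simple generators shows that the linear span of the $\hN^-$-orbit of $v_{w\la}$ equals $U(\gn^-) \cdot v_{w\la} = L^w(\la)$. Hence $f|_{\bO^w}=0$ is equivalent to $f \in \mathrm{Ann}(L^w(\la))$, and we conclude $I_\la = \mathrm{Ann}(L^w(\la))$.

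The main technical obstacle is making the pairing $f(n \cdot v_{w\la})$ rigorous when $n \cdot v_{w\la}$ lives in the pro-completion $L(\la)^{\wedge}$ rather than in $L(\la)$; this is resolved by noting that $f$ is a restricted-dual element supported on finitely many finite-dimensional weight spaces, so the natural pairing $L(\la)^{\vee} \otimes L(\la)^{\wedge} \to \Bbbk$ reduces to a finite sum. Everything else is a direct consequence of Theorem \ref{rest-surj}, Theorem \ref{tBW}, and the integrality of $\bX^w$, and the identification is automatically equivariant for the actions of $\hB^-$ and $U(\gn^-)$ that are visible on both sides.
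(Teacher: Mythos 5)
Your argument is correct and follows essentially the same route as the paper: both obtain the surjection $L(\la)^{\vee}\twoheadrightarrow H^0(\bX^w,\cO_{\bX^w}(\la))$ from Theorems \ref{tBW} and \ref{rest-surj}, and then identify the target by restricting to the dense open orbit $\bO^w$ and using that $v_{w\la}$ generates $L^w(\la)$ over $U(\gn^-)$ (where the paper cites Mathieu's Lemme~4, you compute the orbit span by hand). The one point to make explicit is that extracting coefficients of $\rho_{-\al_i}(z)$ only sees the subgroup $N^-\subset\hN^-$, so to pass between vanishing on all of $\bO^w=\hN^- x_w$ and membership in $\mathrm{Ann}(L^w(\la))$ you need the non-degeneracy of the pairing of Theorem \ref{pairing} (equivalently, that $N^-$ is Zariski dense in $\hN^-$ and that every weight component of $n\cdot v_{w\la}$ for arbitrary $n\in\hN^-$ lies in $L^w(\la)$), which is exactly the content the paper outsources to Mathieu.
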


\begin{proof}
Combining Theorem \ref{tBW} and Theorem \ref{rest-surj}, we have
$$H^0 ( \bX^w, \cO_{\bX^w} (\la) ) ^{\vee} \subset L ( \la ).$$
In addition, the integrality of $\bX^w$ implies that $H^0 ( \bX^w, \cO_{\bX^w} (\la) ) ^{\vee}$ is cyclic as its covering module $H^0 ( \bO^w, \cO_{\bX^w} (\la) ) ^{\vee}$ is a $U ( \gn^- )$-module with cyclic $H$-eigenvector $v_{w \la}$ by \cite[Lemme 4]{Mat89}. These imply our result.
\end{proof}

\begin{thm}[Kashiwara-Shimozono \cite{KS09} Proposition 3.2]\label{tsn}
For each $w \in W$, the scheme $\bX^w$ is normal.
\end{thm}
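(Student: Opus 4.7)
The plan is to reduce the normality of $\bX^w$ to a local property at the $H$-fixed points of $\bX^w$ and then exhibit each local affine chart as a product of a finite-type Richardson variety (whose normality is classical) with an infinite-dimensional affine space.

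First, normality is a local property, and by Theorem \ref{closure-rel} together with Corollary \ref{union} we have $\bX^w = \bigsqcup_{v \ge w} \bO^v$, while each $\bO^v \cong \A^\infty$ is an affine open of $\bX$ containing the $H$-fixed point $x_v$ (Proposition \ref{std}). It therefore suffices to show that $U_v := \bX^w \cap \bO^v$ is normal for every $v \ge w$. The compatible $B^-$-canonical Frobenius splitting of Corollary \ref{B-can-} ensures $U_v$ is reduced and integral, and Theorem \ref{Gamma} controls the relevant ring of sections.

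Second, I would exhibit a product structure $U_v \cong Y_{w,v} \times \A^\infty$ with $Y_{w,v}$ finite-dimensional. The idea is to factor the pro-unipotent group $\hN^- \cong \bO^v$ as $N^-_\Psi \cdot \hN^-_{\Psi^c}$ for a suitably chosen finite set of positive roots $\Psi \subset \Delta^+$, where $\Psi$ contains every root occurring as an $H$-weight difference in a finite generating family for the defining ideal of $\bX^w$ in $\Bbbk [\hN^-]$. Such a finite $\Psi$ exists because $\bX^w$ has finite codimension $\ell (w)$ in $\bX$ by Theorem \ref{closure-rel}, and the defining equations can be extracted from finitely many $H$-weight spaces of $L^w ( \la )^{\vee}$ via Theorem \ref{pairing} and Theorem \ref{Gamma}. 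The complementary deep pro-unipotent subgroup $\hN^-_{\Psi^c}$ then preserves $U_v$ and acts freely on it, yielding the desired decomposition; the finite factor $Y_{w,v}$ is identified with a Richardson variety inside a finite-dimensional partial flag variety for the Levi-like subgroup associated with $\Psi$.

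Third, Richardson varieties in finite-type flag varieties are normal — this is classical via Frobenius splitting in \cite[\S 2.2]{BK05}, and the relevant compatible splitting is precisely the restriction of the $B^-$-canonical splitting constructed in Corollary \ref{B-can-}. A product of a normal finite-type variety with $\A^\infty$ is normal, which can be checked on finite-type sub-charts via \cite[Proposition 1.6.5]{BK05} together with a direct limit argument compatible with $H$-weights. Hence each $U_v$ is normal, and $\bX^w$ is normal.

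The main obstacle is the second step, namely pinning down the correct finite subset $\Psi$ and verifying that the deep pro-unipotent subgroup $\hN^-_{\Psi^c}$ acts freely on $U_v$ without disturbing the defining equations of $\bX^w$. This is essentially a bookkeeping problem on the $H$-weight filtration of $L^w ( \la )^{\vee}$ combined with the explicit description of $\Bbbk [\hN^-]$ provided by Mathieu's pairing (Theorem \ref{pairing}); once this is in place, the remainder of the argument is a routine reduction to the finite-type case.
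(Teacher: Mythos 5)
The paper's own proof of this theorem is a one-line citation: it observes that the argument of \cite[Proposition 3.2]{KS09} nowhere uses the symmetrizability of $\g$ or the hypothesis $\mathrm{char}\,\Bbbk=0$, and therefore applies verbatim. You are instead attempting a self-contained proof, which is more ambitious, but as written it has genuine gaps. The first is a conflation of strata with charts: Corollary \ref{union} and Theorem \ref{closure-rel} give $\bX^w=\bigsqcup_{v\ge w}\bO^v$ as a stratification by \emph{locally closed} $\hN^-$-orbits of codimension $\ell(v)$; only $\bO^e$ and its $N(H)$-translates $\dot v\,\bO^e$ are affine opens (Proposition \ref{std}), and $\dot v\,\bO^e\neq\bO^v$ for $v\neq e$ (indeed $\bX^w\cap\bO^v$ is either empty or all of $\bO^v$). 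Your localization must be carried out on $\bX^w\cap\dot v\,\bO^e$, which is not a single orbit, and the subsequent bookkeeping changes accordingly.

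The second and decisive gap is the product decomposition $U_v\cong Y_{w,v}\times\A^\infty$, which you yourself defer as ``a bookkeeping problem'' --- but it is the entire content of the theorem. You must produce a finite, lower-closed subset $\Psi\subset\Delta^+$ such that the ideal of $\bX^w$ in the chart $\Bbbk[\hN^-]$ is generated by functions invariant under the deep subgroup corresponding to $\Delta^+\setminus\Psi$; since the graded ideal $\bigoplus_\la\ker\bigl(L(\la)^\vee\to L^w(\la)^\vee\bigr)$ lives in the non-Noetherian ring $R$, the existence of such a finite $\Psi$ is precisely the local finiteness statement that requires proof (and is what \cite[\S 3]{KS09} actually establishes). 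Moreover, the identification of the finite factor $Y_{w,v}$ with a Richardson variety ``in a finite-dimensional partial flag variety for the Levi-like subgroup associated with $\Psi$'' is unjustified and implausible as stated: a finite lower-closed set of positive roots does not correspond to a finite-type Levi. The natural finite-dimensional models are the Richardson varieties $\bX^w\cap X_u$ inside the Kac-Moody \emph{thin} Schubert varieties (cf.\ the factorization $x=x_1x_2$ in the proof of Theorem \ref{rest-surj}), and their normality is not ``classical via Frobenius splitting'': a splitting yields reducedness and weak normality, not normality, and for Kac-Moody Richardson varieties normality is a nontrivial theorem of \cite{KS14} proved there under the symmetrizability hypothesis that the present paper is at pains to remove. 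Unless these steps are supplied, the argument does not close; the shortest correct route remains the paper's, namely checking that the proof of \cite[Proposition 3.2]{KS09} applies unchanged.
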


\begin{proof}
The argument in \cite[Proposition 3.2]{KS09} is stated for symmetrizable $\g$ and $\mathrm{char} \, \Bbbk = 0$, but there are no place this assumption is used until \cite[Proposition 3.2]{KS09} in the main body of \cite{KS09}.
\end{proof}

\begin{cor}\label{pn}
For each $w \in W$, we have an isomorphism
$$\bX^w \cong \mathrm{Proj} \, R^w.$$
In particular, $\bX^w$ is projectively normal.
\end{cor}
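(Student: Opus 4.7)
The plan is to identify $\bX^w$ with $\mathrm{Proj}\, R^w$ as a closed subscheme of $\bX = \mathrm{Proj}\, R$, and then read off projective normality from this identification together with the normality already proved in Theorem \ref{tsn}.

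First I would extract from Corollary \ref{grest-surj} (applied with $\mathbb Y = \bX = \bX^e$ and $\mathbb Y' = \bX^w$) the surjectivity of the restriction map $H^0(\bX, \cO_{\bX}(\la)) \twoheadrightarrow H^0(\bX^w, \cO_{\bX^w}(\la))$ for each $\la \in P_+$. By Theorem \ref{tBW} and Theorem \ref{Gamma}, this is the natural quotient $L(\la)^{\vee} \twoheadrightarrow L^w(\la)^{\vee}$. Summing over $\la \in P_+$ and using Corollary \ref{tmult} for compatibility with multiplications yields a surjective homomorphism of $P_+$-graded $\Bbbk$-algebras
$$\phi : R \twoheadrightarrow R^w.$$

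Next, the defining ideal of the closed (integral) subscheme $\bX^w \subset \bX$ is the homogeneous ideal
$$I \;=\; \sum_{\la \in P_+} \ker\bigl(H^0(\bX, \cO_{\bX}(\la)) \to H^0(\bX^w, \cO_{\bX^w}(\la))\bigr)\;\subset R,$$
which by the previous step equals $\ker\phi$. Since Corollary \ref{tmult} shows that $R^w$ is generated over $\Bbbk$ by $\bigoplus_{i \in \tI} L^w(\varpi_i)^{\vee}$, Definition \ref{mproj} applies and we conclude $\bX^w = \mathrm{Proj}_{\tI}(R/\ker\phi) = \mathrm{Proj}\, R^w$ as closed subschemes of $\bX$.

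Finally, projective normality is immediate: the multi-homogeneous coordinate ring of $\bX^w$ under the embedding into $\prod_{i \in \tI} \mathbb{P}(L(\varpi_i)^{\wedge})$ is precisely $R^w = \bigoplus_\la H^0(\bX^w, \cO_{\bX^w}(\la))$ (Theorem \ref{Gamma}), so the restriction map from the ambient coordinate ring to the section ring is surjective in every degree, and $\bX^w$ is normal by Theorem \ref{tsn}. The real content has already been absorbed into Theorems \ref{rest-surj}, \ref{Gamma}, and \ref{tsn}; the only point requiring mild care is that the scheme-theoretic ideal of $\bX^w$ in $\bX$ is indeed read off from the graded sections, which rests on the fact that the line bundles $\cO_{\bX}(\varpi_i)$ jointly furnish the very embedding $\bX \subset \prod_i \mathbb{P}(L(\varpi_i)^{\wedge})$ used in the definition of $\mathrm{Proj}$.
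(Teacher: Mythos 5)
Your proposal is correct and follows essentially the same route as the paper: the isomorphism $\bX^w \cong \mathrm{Proj}\, R^w$ is deduced from Theorem \ref{Gamma} (via the surjectivity of restriction) together with the fact that $\bX^w$ is a closed subscheme of $\bX$, and projective normality is obtained from the degree-one generation of $R^w$ (Corollary \ref{tmult} and Lemma \ref{incl}) combined with the normality of $\bX^w$ (Theorem \ref{tsn}), exactly as in the paper's appeal to the Hartshorne criterion.
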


\begin{proof}
The first assertion is the direct consequence of Theorem \ref{Gamma} since $\bX^w$ is a closed subscheme of $\bX$.

We prove the second assertion. Corollary \ref{tmult} and Lemma \ref{incl} asserts that the ring $R^w$ is generated by $\bigoplus_{i \in \tI} L ^w ( \varpi_i )^{\vee}$. This verifies a sufficient condition of projective normality (see e.g. Hartshorne \cite[Chapter I\!I, Exercise 5.14]{Har77} for singly graded case) in the presence of the normality of $\bX^w$. Therefore, we conclude the assertion.
\end{proof}

The following result implies that $\{L^w ( \la )\}_{w \in W}$ forms a filtration of $L ( \la )$ for each $\la \in P_+$, that is previously recorded when $\g$ is of affine type (see \cite[Theorem 6.23]{AKT08}). An analogous result is known for $\{ L_w ( \la )\}_{w \in W}$ by the works of many people (cf. Littelmann \cite[\S 8]{Lit98} and Kumar \cite[V\!I\!I\!I]{Kum02}).

\begin{cor}\label{compat}
For each finite subset $S \subset W$, there exists another subset $S' \subset W$ so that
$$\bigcap_{w \in S} L^w ( \la ) = \sum_{v \in S'} L^v ( \la ).$$
\end{cor}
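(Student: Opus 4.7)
The plan is to dualize the asserted identity inside $L(\la)^\vee$ and convert it into a statement about the ideal-sheaf cohomology of reduced unions of thick Schubert varieties, which can then be settled by Mayer--Vietoris together with the compatible Frobenius splitting from Corollary~\ref{B-can-}. I first take $S' \subset W$ to be the set of minimal common upper bounds of $S$ in the Bruhat order; by Theorem~\ref{closure-rel}, the thick Schubert varieties contained in every $\bX^w$ for $w \in S$ are exactly $\{\bX^v : v \ge w \text{ for all } w \in S\}$, whose union I denote $\bY := \bigcup_{v \in S'} \bX^v$. Applying Corollary~\ref{B-can-} and \cite[Proposition~1.2.1]{BK05}, the scheme-theoretic intersection $\bigcap_{w \in S} \bX^w$ is compatibly split, in particular reduced, and $B^-$-stable; hence it is a reduced union of thick Schubert varieties, and the set-theoretic comparison identifies it with $\bY$ as a closed subscheme of $\bX$.

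Since each $H$-weight space of $L(\la)$ is finite-dimensional, orthogonal complements in $L(\la)^\vee = H^0(\bX, \cO_\bX(\la))$ interchange finite intersections and sums of $H$-stable subspaces weight by weight. The identity we want is therefore equivalent to $\sum_{w \in S} L^w(\la)^\perp = \bigcap_{v \in S'} L^v(\la)^\perp$ in $L(\la)^\vee$. By Theorem~\ref{Gamma}, the restriction $L(\la)^\vee \twoheadrightarrow L^u(\la)^\vee$ has kernel $L^u(\la)^\perp = I_{\bX^u} := H^0(\bX, \mathcal I_{\bX^u}(\la))$ for each $u \in W$, and $\bigcap_{v \in S'} I_{\bX^v} = I_\bY := H^0(\bX, \mathcal I_\bY(\la))$ because $\mathcal I_\bY = \bigcap_{v \in S'} \mathcal I_{\bX^v}$ as ideal sheaves. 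So it suffices to prove $\sum_{w \in S} I_{\bX^w} = I_\bY$, which I would do by induction on $|S|$. The case $|S| = 1$ is trivial; for the inductive step write $S = S_0 \sqcup \{w\}$ and let $\bY_0 = \bigcap_{u \in S_0} \bX^u$, so that $\bY = \bY_0 \cap \bX^w$. The Mayer--Vietoris sequence
\[
0 \to \cO_{\bY_0 \cup \bX^w} \to \cO_{\bY_0} \oplus \cO_{\bX^w} \to \cO_\bY \to 0
\]
is exact because the scheme-theoretic intersection $\bY_0 \cap \bX^w$ coincides with the reduced union $\bY$ thanks to the compatible splitting. Twisting by $\cO_\bX(\la)$ and taking $H^0$ yields a short exact sequence of finite-dimensional $H$-stable pieces, since the surjectivity of its last map follows from Corollary~\ref{grest-surj} applied to $\bY \subset \bY_0$. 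A weight-by-weight dimension count gives $\dim_\Bbbk I_{\bY,\mu} = \dim_\Bbbk (I_{\bY_0,\mu} + I_{\bX^w,\mu})$ for every $\mu \in X^*$, and together with the trivial inclusion $I_{\bY_0} + I_{\bX^w} \subseteq I_\bY$ (from $\mathcal I_{\bY_0} + \mathcal I_{\bX^w} = \mathcal I_\bY$), this forces $I_{\bY_0} + I_{\bX^w} = I_\bY$. The inductive hypothesis $I_{\bY_0} = \sum_{u \in S_0} I_{\bX^u}$ then completes the induction.

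The main obstacle is the reducedness of the scheme-theoretic intersection of compatibly split thick Schubert varieties, which is what makes the Mayer--Vietoris sequence detect $\bY$ rather than a thickening of it; this is supplied by Corollary~\ref{B-can-} in positive characteristic. In characteristic zero one routes the argument through a reduction modulo $p$ via the $\Z$-form of $L(\la)$, combined with upper-semicontinuity of $H$-weight-space dimensions of $\sum_{v \in S'} L^v(\la)$ and $\bigcap_{w \in S} L^w(\la)$.
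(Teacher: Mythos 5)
Your argument is correct and is essentially the paper's own proof: both rest on the compatible $B^-$-canonical splitting (which makes scheme-theoretic intersections of reduced unions of thick Schubert varieties reduced), the Mayer--Vietoris sequence for two such unions, the surjectivity of restriction maps from Corollary \ref{grest-surj}, and induction on $|S|$. The only difference is presentational: you phrase everything through the ideals $I_{\bX^w} = H^0(\bX, \mathcal I_{\bX^w}(\la))$ and a weight-by-weight orthogonal-complement/dimension count, whereas the paper reads off the same identity directly from the dual short exact sequence of the modules $\sum_{w \in T} L^w(\la) = \Gamma(\bX(T), \cO_{\bX(T)}(\la))^{\vee}$.
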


\begin{proof}
Let $T \subset W$ and set $\bX ( T ) := \bigcup _{w \in T} \bX^w$ (here the union is understood to be the reduced union). We have a sequence of maps
$$\cO_{\bX} ( \la ) \longrightarrow \!\!\!\!\! \rightarrow \cO_{\bX ( T )} ( \la ) \hookrightarrow \bigoplus_{w \in T} \cO_{\bX^w} ( \la ).$$
Thanks to Corollary \ref{grest-surj}, we deduce
$$\bigoplus_{w \in T} L^w ( \la ) = \bigoplus_{w \in T} \Gamma ( \bX, \cO_{\bX^w} ( \la ) ) ^{\vee} \longrightarrow \!\!\!\!\! \rightarrow \Gamma ( \bX, \cO_{\bX ( T )} ( \la ) ) ^{\vee} \hookrightarrow \Gamma ( \bX, \cO_{\bX} ( \la ) ) ^{\vee} = L ( \la ).$$
Moreover, the restriction of the composition maps to a direct summand $L^w ( \la )$ yields the standard embedding. Thus, we conclude
\begin{equation}
\Gamma ( \bX, \cO_{\bX ( T )} ( \la ) ) ^{\vee} = \sum_{w \in T} L^w ( \la ) \subset L ( \la ).\label{Gamma-desc}
\end{equation}

Let us divide $T = T_1 \sqcup T_2$, and we set $\mathbb Y_i := \bigcup_{w \in T_i} \bX^w$ for $i = 1,2$. By \cite[Proposition 1.2.1]{BK05}, the scheme $\mathbb Y_i$ ($i=1,2$) and the scheme-theoretic intersection $\mathbb Y := \mathbb Y_1 \cap \mathbb Y_2$ are reduced.

Since $\mathbb Y$ is $\hB^-$-stable, we have $\mathbb Y = \bX ( T' )$ for some $T' \subset W$. We have a short exact sequence
$$0 \rightarrow \cO_{\bX ( T' )} ( \la ) \rightarrow \cO_{\mathbb Y_1} ( \la ) \oplus \cO_{\mathbb Y_2} ( \la ) \rightarrow \cO_{\mathbb Y_1 \cup \mathbb Y_2} ( \la ) \rightarrow 0.$$
Thanks to (\ref{Gamma-desc}) and Corollary \ref{grest-surj}, we conclude a short exact sequence
$$0 \rightarrow \sum_{w \in T} L^w ( \la ) \rightarrow \left( \sum_{w \in T_1} L^w ( \la ) \right) \oplus \left( \sum_{w \in T_2} L^w ( \la ) \right) \rightarrow \sum_{w \in T'} L^w ( \la ) \rightarrow 0$$
of $\gb^-$-modules. In particular, the third term can be identified with the intersection of the direct summands of the second term inside $L ( \lambda )$. This proves the assertion by induction on $|S|$ (since the case $|S| = 1$ is apparent from $S = S'$).
\end{proof}

{\small
\hskip -5.25mm {\bf Acknowledgement:} The author would like to thank Masaki Kashiwara and Shrawan Kumar for helpful correspondences, and Daisuke Sagaki for discussions. This research is supported in part by JSPS Grant-in-Aid for Scientific Research (B) 26287004.}

{\footnotesize
\bibliography{kmref}
\bibliographystyle{plain}}
\end{document}